\documentclass[10pt]{article}
\usepackage[margin=1in]{geometry}
\usepackage{amsmath,amsthm,amsfonts,amssymb,amscd,mathrsfs}
\usepackage{enumerate}
\usepackage{enumitem}
\usepackage{graphicx}
\usepackage{tikz}
\usepackage{subcaption}
\usepackage{booktabs}
\usepackage{array,tabularx}
\usepackage{bbm}
\usepackage{float}
\usepackage{algorithm}
\usepackage{algpseudocode}
\usepackage{etoolbox}
\usepackage{cite} 
\usepackage[colorlinks=true,allcolors=black]{hyperref}
\usetikzlibrary{arrows.meta,decorations.pathmorphing}
\patchcmd{\abstract}{\small}{}{}{}

\newtheorem{theorem}{Theorem}[section]
\newtheorem{definition}[theorem]{Definition}

\newtheorem{proposition}[theorem]{Proposition}
\newtheorem{lemma}[theorem]{Lemma}
\newtheorem{assumption}[theorem]{Assumption}
\newtheorem{problem}[theorem]{Problem}
\newenvironment{Problem}{\begin{problem}}{\end{problem}}
\theoremstyle{remark}
\newtheorem{remark}[theorem]{Remark}

\newcommand{\R}{{\mathbb R}}
\newcommand{\E}{{\mathbb E}}

\newcommand{\inprod}[2]{\left\langle #1, #2 \right\rangle}

\newcommand{\BW}{\mathrm{BW}}

\DeclareMathOperator{\vecop}{vec}

\allowdisplaybreaks[4]
\title{Joint Identifiability and Conditioning in Finite-Horizon Continuous-Time Inverse LQR with Unknown Dynamics}

\author{%
Meiling Yu$^{1}$,\quad
Yuan-Hua Ni$^{1}$,\quad
Lei Jiang$^{2}$\\[0.5em]
\small $^{1}$College of Artificial Intelligence,
Nankai University, China\\
\small $^{2}$China North Artificial Intelligence Innovation
Research Institute, China\\[0.3em]
\small
\href{mailto:yumeiling0512@126.com}
{\texttt{yumeiling0512@126.com}};\quad
\href{mailto:yhni@nankai.edu.cn}
{\texttt{yhni@nankai.edu.cn}};\quad
\href{mailto:jianglei@openloong.net}
{\texttt{jianglei@openloong.net}}
}
\date{}

\begin{document}

\maketitle

\begin{abstract}
Inverse Optimal Control (IOC) aims to infer the underlying cost functional of an agent from observations of its expert behavior. This paper studies the finite-horizon continuous-time inverse LQR problem from closed-loop state--input trajectories, where both the system matrices and the quadratic cost are unknown. The finite horizon induces a time-varying optimal gain, and this endogenous excitation serves as the structural mechanism that makes joint recovery possible. We quantify this mechanism through three computable conditioning indices, which measure state richness, gain-variation richness, and injectivity of a structured cost operator. Using these indices, we establish joint identifiability conditions for the inverse problem considered here. Crucially, these conditions guarantee recovery of the ground-truth system matrices $(A,B)$ and the true cost weighting matrices, rather than merely a behaviorally equivalent surrogate. We also develop a conditioning-aware sampled-data reconstruction method that reconstructs the gain $K(\cdot)$ and the closed-loop dynamics matrix $A_c(\cdot)$ from noisy measurements, recovers $(A,B)$ in closed form, and identifies the quadratic weights through a convex semidefinite program. We further establish the non-asymptotic perturbation bounds and the consistency of the full reconstruction method under sub-Gaussian observation noise, with explicit dependence on the same conditioning indices. Numerical experiments support the theory and illustrate the diagnostic value of the conditioning indices. 

\end{abstract}

\medskip\noindent Keywords: Inverse optimal control, unknown dynamics, structural identifiability, consistent estimation.

\section{Introduction}

Inverse optimal control asks whether one can reconstruct, from observed expert behavior, the performance index for which that behavior is optimal. In the linear--quadratic setting this inverse question is especially attractive: the forward problem is analytically transparent, the recovered weights often admit direct control-theoretic interpretation, and the resulting framework connects naturally to imitation learning, behavior modeling, and human--robot interaction \cite{lewis2012optimal,ab2020inverse,adams2022survey,chan2025inverse}. The classical finite-horizon linear quadratic regulator (LQR) problem is well understood when the system dynamics and the quadratic weights are known. In applications, however, the cost is seldom available a priori, and the system model may be uncertain as well. This is precisely the setting in which the inverse LQR is most useful.

The theory of inverse LQR has traditionally been developed under the assumption that the system dynamics are known. Starting from the foundational work of Kalman~\cite{kalman1964linear} and subsequently formalized by Boyd et al.~\cite{boyd1994linear}, this line of research has established fundamental characterizations of optimality and shown that, when both the system model and the optimal feedback law are available, the underlying cost can be recovered through frequency-domain conditions or Linear Matrix Inequalities (LMIs). Subsequent developments have extended this framework to stochastic systems with process and measurement noise~\cite{zhang2019inverse,zhang2022statistically,li2024inverse,karg2024bi}, partially observed systems~\cite{molloy2016discrete}, unknown control horizons~\cite{qu2024control}, non-autonomous LQR problems~\cite{jean2024inverse}, and multi-agent settings~\cite{zhang2023inverse,hallinan2025inverse}. These advances have considerably expanded the scope of inverse optimal control. Nevertheless, they share a common premise: the system matrices are known, or can be identified independently of the inverse problem itself. These works therefore do not directly answer the question studied here: whether one can jointly recover the system dynamics and the quadratic cost from finite-horizon closed-loop trajectory data when the dynamics are unknown.

When the dynamics are not available a priori, existing data-driven approaches to inverse optimal control may be broadly classified into three categories. The first class proceeds directly from optimality conditions and estimates cost parameters from observed trajectories. For example, Qu et al.~\cite{qu20243dioc} recover the cost weighting matrices from input--output data in the behavioral systems framework. Their approach learns cost weights directly from measured input--output trajectories and requires persistently exciting offline data. The second class is motivated by reinforcement learning. Inverse reinforcement Q-learning~\cite{xue2021inverse} updates the cost weighting matrices jointly with policy iteration, but in general it identifies only a reward representation consistent with the observed feedback, rather than uniquely recovering the true parameters. The third class consists of differentiable optimal control formulations, including Pontryagin differentiable programming~\cite{jin2020pontryagin} and its extension in~\cite{cao2025differential}, which provide flexible end-to-end parameter-learning architectures. However, their theoretical guarantees are inherently tied to local optimization arguments. The recent convex data-driven inverse optimal control method of~\cite{garrabe2025convexddioc} likewise emphasizes tractable recovery from data. Taken together, these studies show that inverse optimal control with unknown dynamics can be approached from several angles, but they leave open a question that is central to the present paper: under what conditions do finite-horizon closed-loop trajectories determine the underlying dynamics and quadratic cost, and how does reconstruction deteriorate as these conditions become weak? Recent work has begun to sharpen this issue. In the discrete-time stochastic LQR setting, Geadah et al.~\cite{geadah2024inferring} shows that, under infinite-horizon closed-loop operation, the optimal gain \(K\) is constant, so that trajectory data identify only the closed-loop pair \((A-BK,\,K)\) and do not separate the open-loop matrices \(A\) and \(B\); they further show that finite horizons and exploratory control noise can restore identifiability. Cheng et al.~\cite{cheng2026ddioc} establishes identifiability conditions and statistical consistency for the discrete-time finite-horizon linear quadratic tracking problem with unknown target states and unknown dynamics. In this paper, we develop a comparable treatment for continuous-time finite-horizon inverse LQR that combines joint identifiability conditions, an explicit sampled-data reconstruction procedure, and perturbation guarantees within a single unified framework.

The contributions of this paper are as follows.

First, we establish a structural identifiability theory for finite-horizon
continuous-time inverse LQR with unknown dynamics. The key observation is that the finite-horizon Riccati terminal condition induces a time-varying gain $K(t)$,
and this gain variation provides the structural information needed to separate
the constant open-loop matrices from the time-varying closed-loop matrix
$A_c(t)=A-BK(t)$. We introduce three computable conditioning indices
$c_X$, $c_{AB}$, and $c_{QRH}$, which quantify state richness, gain-variation
richness, and injectivity of the normalized structured cost operator,
respectively. Under $\operatorname{rank}(X(0))=n$, $H=\alpha Q$, and the
normalization $\operatorname{tr}(R)=m$, the conditions
$c_{AB}>0$ and $c_{QRH}>0$ guarantee global identifiability of the
normalized true tuple within the structured class.
Moreover, under $Q^\star\succ0$, the condition $c_{QRH}>0$ is
necessary for global identifiability.

Second, we convert the above identifiability decomposition into an implementable
sampled-data reconstruction procedure, called CR-IOC. The method first estimates
$K(t)$ from the closed-loop relation $ U(t)=-K(t)X(t)$, and then reconstructs $A_c(t)$ from local state increments with a Richardson bias-cancellation step~\cite{richardson1911approx}, thereby avoiding direct numerical differentiation. It subsequently recovers $(A,B)$ in closed form through a gain-variation Gramian, and finally recovers $(Q,R,H)$ through the structured Lyapunov equation and a semidefinite program. The algorithm requires no nested solution of optimal control problems. Moreover, the empirical conditioning indices $\hat c_X$, $\hat c_{AB}$, and $\hat c_{QRH}$ arise naturally from the reconstruction steps and serve as diagnostics for data quality and numerical stability.

 Third, we derive non-asymptotic perturbation bounds for all stages of the reconstruction and establish end-to-end consistency under sub-Gaussian observation noise. The conditioning indices \(c_X\), \(c_{AB}\), and \(c_{QRH}\) quantify the sensitivity of the gain-recovery, dynamics-recovery, and cost-recovery stages, respectively. In particular, as any of these indices approaches zero, the bounds predict rapid deterioration of numerical accuracy. This makes explicit how weak structural informativeness translates into statistical and numerical sensitivity.

Fourth, we clarify the relation between the present results and existing IOC literature. We use standard inverse LQR ingredients, including
Riccati and Lyapunov optimality identities, stationarity residuals, and semidefinite recovery. These tools are well established in related IOC literature~\cite{zhang2019inverse,yu2021sysid,li2020ctiqoc,cao2025inverse}.
They are not the novelty of this paper. The new contribution is to combine these tools with finite-horizon gain variation to solve a coupled inverse problem in which both $(A,B)$ and $(Q,R,H)$ are unknown and must be recovered from the same closed-loop trajectories. More specifically, the relation to existing work can be summarized as follows: 
\begin{itemize}
    \item Compared with direct data-driven IOC methods, such as
\cite{qu20243dioc}, the goal of this paper is also different.
The method in \cite{qu20243dioc} recovers cost weights directly from
input--output data through a behavioral systems representation and
model-free KKT conditions, and provides the corresponding identifiability and
perturbation analysis. In contrast, this paper focuses on joint
identifiability and reconstruction of the complete parameter tuple
$(A,B,Q,R,H)$ under a continuous-time finite-horizon LQR structure.
   \item Compared with convex data-driven IOC methods such as
\cite{garrabe2025convexddioc}, this paper considers a more specific
LQR setting. We not only ask whether a cost can be recovered, but
also determine when closed-loop trajectories uniquely specify both the
unknown dynamics and the unknown cost.
   \item Discrete-time unknown-dynamics IOC and inverse-LQR
works, such as \cite{geadah2024inferring,cheng2026ddioc}, have already
revealed the role of finite horizons in identifiability, and some of them
provide statistical consistency guarantees under observation noise. The
connection between the present paper and these results is that we also exploit
finite-horizon gain variation as the source of identifiability. The difference
is that the continuous-time problem cannot be obtained by a direct parallel
translation of the discrete-time arguments. In our setting, the dynamics
separation condition must be re-established from the continuous-time relation
$ A_c(t)=A-BK(t)$ and an integral gain-variation Gramian. The analysis must also account for the reconstruction errors caused by the sampling continuous trajectories on a discrete time grid.
   \item In particular, this paper provides an end-to-end error-propagation analysis
that, to the best of our knowledge, has not been systematically developed in
the existing IOC literature for this continuous-time unknown-dynamics setting.
Starting from noisy sampled trajectories, errors propagate successively through
temporal smoothing, recovery of $K(\cdot)$, reconstruction of $A_c(\cdot)$,
separation of $(A,B)$, and recovery of $(Q,R,H)$. At each stage, the error
amplification is explicitly controlled by the corresponding conditioning
indices. 
\end{itemize}
Thus, the contribution of this paper is not merely another IOC
algorithm. Rather, for continuous-time finite-horizon inverse LQR with unknown
dynamics, it unifies joint identifiability, conditioning diagnostics,
sampled-data reconstruction, and non-asymptotic error propagation within a
single framework.

The rest of the paper is organized as follows. Section~2 introduces the preliminaries, the problem formulation, and the conditioning-index analysis. Section~3 develops the identifiability theory for the continuous-time setting, including necessary and sufficient conditions for the identifiability of the dynamics and the cost, as well as a joint identifiability theorem. Section~4 presents the conditioning-aware sampled-data reconstruction method. Section~5 establishes the non-asymptotic perturbation bounds and proves the consistency. Section~6 reports numerical experiments that support the theory and illustrate the diagnostic value of the proposed conditioning indices. Section~7 concludes the paper and discusses future directions.

\noindent\textit{Notation.}
$\R^n$ and $\R^{n\times m}$ denote the spaces of real $n$-vectors and real $n\times m$ matrices, respectively.
We write $\mathbb S^n:=\{M\in\R^{n\times n}:M=M^\top\}$ for the space of real symmetric matrices.
For symmetric matrices $M,N\in\mathbb S^n$, the notation $M\succeq N$ ($M\succ N$) means that $M-N$ is positive semidefinite (positive definite).
For a matrix-valued function $W:[0,T]\to\R^{m\times n}$, we denote $\|W\|_{L^2([0,T])}^2:=\int_0^T\|W(t)\|_F^2\,dt$.
For matrices $X,Y$ of matching dimensions, $\inprod{X}{Y}:=\mathrm{tr}(X^\top Y)$.
We use $\|\cdot\|$ for the Euclidean norm and $\|\cdot\|_F$ for the Frobenius norm. For a matrix pair $(Q,R)$, we write
$\|(Q,R)\|_F^2:=\|Q\|_F^2+\|R\|_F^2$.
The notation $a\asymp b$ means that $a/b$ stays bounded above and below by positive constants that are independent of the asymptotic parameters.
For a matrix $M\in\mathbb{R}^{p\times q}$, $\operatorname{vec}(M)\in\mathbb{R}^{pq}$ denotes the column-wise vectorization obtained by stacking the columns of $M$. For a symmetric matrix $S\in\mathbb{S}^{n}$, $\operatorname{vech}(S)\in\mathbb{R}^{n(n+1)/2}$ denotes the half-vectorization obtained by stacking the lower-triangular entries of $S$, including the diagonal entries, in column-wise order.

\section{Preliminaries and Problem formulation}\label{sec:setup}
\subsection{Finite-horizon LQR setup and closed-loop representation}\label{sec:operator-lyap}
We consider $N$ optimal closed-loop state--input trajectories generated over a fixed horizon $[0,T]$ by a finite-horizon continuous-time LQR law.
For each trajectory $i\in\{1,\dots,N\}$, the controlled system is described by
\begin{equation}\label{eq:dyn}
\dot x_i(t)=Ax_i(t)+Bu_i(t),\quad x_i(0)=x_{i,0},\quad t\in[0,T],
\end{equation}
where $A\in\R^{n\times n}$ and $B\in\R^{n\times m}$ are constant and unknown. The trajectories are generated from the different initial conditions $x_{i,0}$, which are later assumed to be sufficiently rich in the sense that the initial state matrix has full row rank.
For compactness, we define
\begin{equation}\label{eq:XU-def}
X(t):=\big[x_1(t)\ \cdots\ x_N(t)\big]\in\R^{n\times N},\qquad
U(t):=\big[u_1(t)\ \cdots\ u_N(t)\big]\in\R^{m\times N}.
\end{equation}
The associated optimal control problem is
\begin{equation}\label{eq:lqr}
\min_{u(\cdot)}\ J(u):=x(T)^\top Hx(T)+\int_0^T\big(x(t)^\top Qx(t)+u(t)^\top Ru(t)\big)\,dt
\end{equation}
with $Q=Q^\top\succeq 0$, $R=R^\top\succ 0$, and $H=H^\top\succeq 0$.
By the classical finite-horizon LQR theory\cite{anderson2007optimal}, the optimal control is unique and has the linear feedback form
\begin{equation}\label{eq:opt-gain}
u(t)=-K(t)x(t),\qquad K(t)=R^{-1}B^\top P(t),
\end{equation}
where $P(\cdot)$ is the unique symmetric solution of the Riccati differential equation
\begin{equation}\label{eq:DRE}
-\dot P(t)=A^\top P(t)+P(t)A-P(t)BR^{-1}B^\top P(t)+Q,\qquad P(T)=H.
\end{equation}

Because the gain \eqref{eq:opt-gain} is invariant under the positive rescaling $(Q,R,H)\mapsto(\gamma Q,\gamma R,\gamma H)$ for any $\gamma>0$, closed-loop trajectories can determine at most the relative scale of the cost weighting matrices, but not their common absolute scale.
We remove this intrinsic ambiguity by imposing
\begin{equation}\label{eq:scale-norm}
\mathrm{tr}(R)=m.
\end{equation}

\begin{assumption}[Standing assumptions]\label{ass:standing}
\hfill
\begin{enumerate}[label=(A\arabic*),leftmargin=*]
\item\label{ass:stab} The pair $(A,B)$ is stabilizable.
\item\label{ass:Brank} The input matrix has full column rank, $\mathrm{rank}(B)=m$, and $m\le n$.
\item\label{ass:richIC} The initial state matrix has full row rank, $\mathrm{rank}(X(0))=n$.
\end{enumerate}
\end{assumption}

\begin{assumption}\label{ass:terminal-prop}
There exists a known scalar $\alpha\ge 0$ such that the terminal weight is proportional to the running state weight
\begin{equation}\label{eq:H-alphaQ}
H=\alpha Q.
\end{equation}
\end{assumption}

\begin{remark}\label{rem:assumptions-common}
Assumptions~\ref{ass:standing} and \ref{ass:terminal-prop} are standard in finite-horizon LQR and inverse LQR.
The stabilizability of $(A, B)$ in Assumption~\ref{ass:standing} ensures that the underlying LQR problem is well-posed\cite{anderson2007optimal}. The nondegenerate input channels and trajectory-rich data matrices are imposed to exclude structural ambiguity in the recovery problem; similar conditions appear in related inverse-LQR formulations\cite{zhang2019lqr,yu2021sysid,li2020ctiqoc,cao2025inverse}.
The proportional terminal penalty in Assumption~\ref{ass:terminal-prop} is also common in finite-horizon LQR/MPC design when a structured terminal cost is adopted \cite{mayne2000mpc,rawlings2009mpc}.
\end{remark}

Since the closed-loop state transition matrix is nonsingular on $[0,T]$, Assumption~\ref{ass:richIC} implies $\mathrm{rank}(X(t))=n$ for all $t\in[0,T]$.
This will guarantee the positivity of the state-richness index introduced below.
We also use the closed-loop dynamics matrix
\begin{equation}\label{eq:Acl}
A_c(t):=A-BK(t),
\end{equation}
so that $\dot x(t)=A_c(t)x(t)$ and $u(t)=-K(t)x(t)$. Using the stationarity condition $B^\top P(t)=RK(t)$, the Riccati differential equation \eqref{eq:DRE} can be rewritten as the terminal-value Lyapunov equation
\begin{equation}\label{eq:CL-lyap}
-\dot P(t)=A_c(t)^\top P(t)+P(t)A_c(t)+Q+K(t)^\top RK(t),\qquad P(T)=H.
\end{equation}
Crucially, \eqref{eq:CL-lyap} is affine in $(Q,R,H)$ and becomes linear when $(A,B,K(\cdot))$ are fixed. The next lemma provides a convenient integral representation for terminal-value Lyapunov equations such as \eqref{eq:CL-lyap}.
\begin{lemma}\label{lem:terminal-lyap}
Let $A_c(\cdot)$ be continuous on $[0,T]$. For $\tau\ge t$, let
$\Phi(\tau,t)$ denote the nonsingular state transition matrix associated with
$\dot z=A_c(t)z$, i.e.,
\[
\frac{\partial}{\partial \tau}\Phi(\tau,t)=A_c(\tau)\Phi(\tau,t),
\qquad
\Phi(t,t)=I.
\]
Consider the terminal-value Lyapunov equation
\[
-\dot P(t)=A_c(t)^\top P(t)+P(t)A_c(t)+S(t),
\qquad
P(T)=H,
\]
where $S(\cdot)\in L^1([0,T];\mathbb R^{n\times n})$ and $H\in\mathbb R^{n\times n}$ is given. Then, this equation admits a unique absolutely continuous solution, satisfying the differential equation for almost every $t\in[0,T]$, i.e.,
\[
P(t)=\Phi(T,t)^\top H\Phi(T,t)
+\int_t^T\Phi(\tau,t)^\top S(\tau)\Phi(\tau,t)\,d\tau .
\]
\end{lemma}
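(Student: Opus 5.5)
The plan is to verify directly that the displayed formula solves the equation, and then prove uniqueness with a linear homogeneous argument.

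\emph{Transition-matrix identities.} Since $A_c(\cdot)$ is continuous on the compact interval $[0,T]$, $\Phi(\tau,t)$ exists and is nonsingular. It is $C^1$ in both arguments, and it obeys the standard backward identity
\[
\frac{\partial}{\partial t}\Phi(\tau,t)=-\Phi(\tau,t)A_c(t).
\]
I would derive the backward identity by differentiating $\Phi(\tau,t)\Phi(t,\tau)=I$ with respect to $t$. Consequently,
\[
\frac{\partial}{\partial t}\big[\Phi(\tau,t)^\top M\Phi(\tau,t)\big]=-A_c(t)^\top\big[\Phi(\tau,t)^\top M\Phi(\tau,t)\big]-\big[\Phi(\tau,t)^\top M\Phi(\tau,t)\big]A_c(t)
\]
holds for any fixed matrix $M$.

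\emph{Existence.} Define $P(t)$ by the displayed formula. The terminal term is $C^1$ and, by the identity above, satisfies the homogeneous equation with value $H$ at $t=T$. For the integral term $\int_t^T F(\tau,t)\,d\tau$, where $F(\tau,t)=\Phi(\tau,t)^\top S(\tau)\Phi(\tau,t)$, I apply the Leibniz rule in its absolutely continuous version. The lower limit contributes $-F(t,t)=-S(t)$, valid at every Lebesgue point of $S$, hence for almost every $t$. Differentiating under the integral contributes $-A_c^\top(\cdot)-(\cdot)A_c$ applied to the integral.

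To justify this rigorously with $S$ only in $L^1$, I would avoid differentiating under the integral sign. Instead, I write
\[
\int_t^T F\,d\tau=\Phi(T,t)^\top\Big[\int_t^T \Phi(\tau,T)^\top S(\tau)\Phi(\tau,T)\,d\tau\Big]\Phi(T,t),
\]
using the semigroup property $\Phi(\tau,t)=\Phi(\tau,T)\Phi(T,t)$. This expresses the term as a product of $C^1$ factors and the indefinite integral of an $L^1$ function. That indefinite integral is absolutely continuous, with derivative $-\Phi(t,T)^\top S(t)\Phi(t,T)$ almost everywhere by the Lebesgue differentiation theorem. The product rule for absolutely continuous functions then gives the equation almost everywhere. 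At $t=T$ the integral vanishes, so $P(T)=H$. This justification step is the main technical point, because of the low regularity of $S$.

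\emph{Uniqueness.} Let $D=P_1-P_2$ be the difference of two absolutely continuous solutions. Then $D$ solves the homogeneous equation almost everywhere with $D(T)=0$. Set $G(t)=\Phi(T,t)^{-\top}D(t)\Phi(T,t)^{-1}=\Phi(t,T)^\top D(t)\Phi(t,T)$. This $G$ is absolutely continuous, and a product-rule computation gives $\dot G=0$ almost everywhere. An absolutely continuous function with zero derivative almost everywhere is constant, so $G\equiv G(T)=0$. Since $\Phi(t,T)$ is nonsingular, $D\equiv0$.
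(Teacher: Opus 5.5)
Your proof is correct and has the same overall structure as the paper's: verify the variation-of-constants formula directly, then get uniqueness by conjugating the difference $D$ with the transition matrix. For uniqueness you use the fixed base point $T$, namely $G(t)=\Phi(t,T)^\top D(t)\Phi(t,T)$. The paper instead fixes $t$ and uses $Y(\tau)=\Phi(\tau,t)^\top D(\tau)\Phi(\tau,t)$ on $[t,T]$. Since $Y(\tau)=\Phi(T,t)^\top G(\tau)\Phi(T,t)$, these are the same argument.

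The one substantive difference is in the existence step. The paper differentiates $\int_t^T\Phi(\tau,t)^\top S(\tau)\Phi(\tau,t)\,d\tau$ directly, citing a Leibniz rule for an $L^1$ integrand, and asserts absolute continuity without further detail. You instead use $\Phi(\tau,t)=\Phi(\tau,T)\Phi(T,t)$ to move all $t$-dependence outside the integral. Absolute continuity and the a.e.\ derivative then follow from the Lebesgue differentiation theorem for an indefinite integral of an $L^1$ function, plus the product rule for absolutely continuous functions.

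Your route is the cleaner one for merely integrable $S$. It needs no dominated-convergence or Lebesgue-point argument for a parameter-dependent integrand, and it gives absolute continuity for free. The paper's route is the more direct computation and is entirely routine when $S$ is continuous. That is the case for every forcing term the paper actually feeds into the lemma, such as $Q+K^\top RK$ with continuous $K$.
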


\begin{proof}
Lemma \ref{lem:terminal-lyap} is a standard consequence of the variation-of-constants formula for linear time-varying systems. We include the proof for completeness. Since $A_c(\cdot)$ is continuous on the compact interval $[0,T]$, the state
transition matrix $\Phi(\tau,t)$ is well defined, nonsingular, and satisfies
\[
\frac{\partial}{\partial \tau}\Phi(\tau,t)=A_c(\tau)\Phi(\tau,t),
\qquad
\frac{\partial}{\partial t}\Phi(\tau,t)=-\Phi(\tau,t)A_c(t).
\]
Moreover, if $M_A:=\sup_{s\in[0,T]}\|A_c(s)\|$, then
\[
\|\Phi(\tau,t)\|\le e^{M_A(\tau-t)},\qquad 0\le t\le \tau\le T.
\]
We first prove the existence by defining
\[
P(t):=\Phi(T,t)^\top H\Phi(T,t)
+\int_t^T\Phi(\tau,t)^\top S(\tau)\Phi(\tau,t)\,d\tau .
\]
The integral is well defined because
\[
\|\Phi(\tau,t)^\top S(\tau)\Phi(\tau,t)\|
\le e^{2M_A T}\|S(\tau)\|,
\]
and $S\in L^1([0,T])$. The first term of $P(t)$ is continuously differentiable in $t$,
and the second term is absolutely continuous in $t$ by the Leibniz rule for
parameter-dependent integrals with an $L^1$ integrand. Hence, $P(\cdot)$ is
absolutely continuous. Let
\[
P_H(t):=\Phi(T,t)^\top H\Phi(T,t),
\qquad
P_S(t):=\int_t^T\Phi(\tau,t)^\top S(\tau)\Phi(\tau,t)\,d\tau .
\]
Using $\partial_t\Phi(\tau,t)=-\Phi(\tau,t)A_c(t)$, we obtain
\[
\dot P_H(t)
=
-A_c(t)^\top P_H(t)-P_H(t)A_c(t).
\]
For the integral term, the same identity and the Leibniz rule give, for almost
every $t\in[0,T]$,
\[
\begin{aligned}
\dot P_S(t)
&=
-\Phi(t,t)^\top S(t)\Phi(t,t)
+\int_t^T \frac{\partial}{\partial t}
\left[
\Phi(\tau,t)^\top S(\tau)\Phi(\tau,t)
\right]d\tau \\
&=
-S(t)
-\int_t^T
\left[
A_c(t)^\top \Phi(\tau,t)^\top S(\tau)\Phi(\tau,t)
+\Phi(\tau,t)^\top S(\tau)\Phi(\tau,t)A_c(t)
\right]d\tau \\
&=
-S(t)-A_c(t)^\top P_S(t)-P_S(t)A_c(t).
\end{aligned}
\]
Combining the two displays yields, for almost every $t\in[0,T]$,
\[
\dot P(t)
=
-A_c(t)^\top P(t)-P(t)A_c(t)-S(t).
\]
Also, we have
\[
P(T)=\Phi(T,T)^\top H\Phi(T,T)+\int_T^T\Phi(\tau,T)^\top S(\tau)\Phi(\tau,T)\,d\tau=H.
\]
Thus, the formula defines an absolutely continuous solution.

It remains to prove the uniqueness. Let $\widetilde P(\cdot)$ be another absolutely
continuous solution with the same terminal condition, and set $D(t):=\widetilde P(t)-P(t).$
Then, $D(T)=0$ and, for almost every $t$,
\[
-\dot D(t)=A_c(t)^\top D(t)+D(t)A_c(t).
\]
Fix any $t\in[0,T]$ and define
\[
Y(\tau):=\Phi(\tau,t)^\top D(\tau)\Phi(\tau,t), \quad \forall \tau\in[t,T].
\]
Since $D$ is absolutely continuous and $\Phi(\cdot,t)$ is continuously
differentiable, $Y$ is absolutely continuous. Differentiating for almost every
$\tau\in[t,T]$ gives
\[
\begin{aligned}
\dot Y(\tau)=
\Phi(\tau,t)^\top
\left[
A_c(\tau)^\top D(\tau)+\dot D(\tau)+D(\tau)A_c(\tau)
\right]
\Phi(\tau,t)=0.
\end{aligned}
\]
Therefore, $Y(\tau)$ is constant on $[t,T]$. Hence,
\[
D(t)=Y(t)=Y(T)=\Phi(T,t)^\top D(T)\Phi(T,t)=0.
\]
Since $t\in[0,T]$ is arbitrary, $D(t)=0$ for all $t\in[0,T]$. Thus,
$\widetilde P=P$, and the solution is unique.
\end{proof}

\subsection{Linear operator formulation and conditioning indices}\label{sec:operator-linear}
With the finite-horizon LQR setup fixed, the inverse problem studied in this paper can be stated as follows.

\begin{Problem}[Finite-horizon continuous-time inverse LQR]\label{prob:inverse-lqr}
Consider a set of closed-loop state--input trajectories $\mathcal D:=\{(x_i(\cdot),u_i(\cdot))\}_{i=1}^N$ generated over the horizon $[0,T]$ by the optimal control law \eqref{eq:opt-gain} applied to the unknown linear system \eqref{eq:dyn}. Under Assumptions~\ref{ass:standing} and \ref{ass:terminal-prop} together with the normalization \eqref{eq:scale-norm}, recover the unknown parameter tuple $\Theta=(A,B,Q,R,H)$ with $Q\succeq 0$, $R\succ 0$, and $H=\alpha Q$, such that the finite-horizon LQR problem determined by $\Theta$ reproduces the optimal closed-loop behavior exhibited in $\mathcal D$.
\end{Problem}


\begin{definition}\label{def:operators}
Fix matrices $(A,B)$ and a continuous gain function
$K:[0,T]\to\mathbb{R}^{m\times n}$. For any symmetric triple
$(Q,R,H)$, let $P_{Q,R,H}(\cdot)$ denote the unique solution of
\begin{equation}\label{eq:P-operator}
-\dot P=A_c^\top P+PA_c+Q+K^\top RK,\qquad P(T)=H.
\end{equation}
The associated linear stationarity residual operator $\mathscr{M}$ is defined pointwise by
\begin{equation}\label{eq:M-operator}
\big(\mathscr{M}(Q,R,H)\big)(t):=B^\top P_{Q,R,H}(t)-RK(t),\qquad t\in[0,T].
\end{equation}
\end{definition}

Linearity of $P_{Q,R,H}$ follows from Lemma~\ref{lem:terminal-lyap} because \eqref{eq:P-operator} is affine in $(Q,R,H)$ and becomes linear when $(A,B,K(\cdot))$ are fixed; linearity of $\mathscr{M}$ is immediate from \eqref{eq:M-operator}.

\begin{definition}[Conditioning indices]\label{def:indices}
The following quantities will be used to state identifiability conditions and to quantify numerical conditioning.
\begin{enumerate}[label=(\roman*),leftmargin=*]
\item\label{it:cX} For the noiseless state matrix $X(t)$ in \eqref{eq:XU-def}, define
\begin{equation}\label{eq:cX}
c_X:=\inf_{t\in[0,T]}\lambda_{\min}\big(X(t)X(t)^\top\big).
\end{equation}
\item\label{it:cAB} Let
\begin{equation}\label{eq:K-mean}
\bar K:=\frac{1}{T}\int_0^T K(t)\,dt,\qquad \Delta K(t):=K(t)-\bar K,
\end{equation}
and define the Gramian
\begin{equation}\label{eq:GK}
\mathcal{G}_K:=\int_0^T\Delta K(t)\Delta K(t)^\top\,dt,\qquad c_{AB}:=\lambda_{\min}(\mathcal{G}_K).
\end{equation}
\item\label{it:cQRH} Under Assumption~\ref{ass:terminal-prop} with fixed $\alpha$, the cost is parameterized by $(Q,R)$ with $H=\alpha Q$.
Define the structured stationarity residual operator
\begin{equation}\label{eq:Malpha-def}
\big(\mathscr{M}_\alpha(Q,R)\big)(t):=\big(\mathscr{M}(Q,R,\alpha Q)\big)(t)
= B^\top P_{Q,R,\alpha Q}(t)-RK(t),\qquad t\in[0,T],
\end{equation}
where $P_{Q,R,\alpha Q}(\cdot)$ solves \eqref{eq:P-operator} with terminal condition $P(T)=\alpha Q$.
Let
\begin{equation*}
\mathcal{V}_\alpha:=\Big\{(\Delta Q,\Delta R):\ \Delta Q=\Delta Q^\top,\ \Delta R=\Delta R^\top,\ \mathrm{tr}(\Delta R)=0\Big\}
\end{equation*}
be the difference space of the normalized structured cost family $H=\alpha Q$ and $\mathrm{tr}(R)=m$. Indeed, if two structured normalized pairs $(Q_1,R_1)$ and $(Q_2,R_2)$ satisfy $\mathrm{tr}(R_1)=\mathrm{tr}(R_2)=m$, then their difference belongs to $\mathcal V_\alpha$, and the associated terminal perturbation is $\Delta H=\alpha\Delta Q$. we define
\begin{equation}\label{eq:cQRH}
c_{QRH}:=\inf_{(\Delta Q,\Delta R)\in\mathcal{V}_\alpha\,:\,\|(\Delta Q,\Delta R)\|_F=1}\ \big\|\mathscr{M}_\alpha(\Delta Q,\Delta R)\big\|_{L^2([0,T])}.
\end{equation}
\end{enumerate}
\end{definition}

Under Assumption~\ref{ass:richIC}, the noiseless index $c_X$ is automatically positive, while its empirical counterpart remains an important sampled-data diagnostic. The quantity $c_{QRH}$ is the restricted minimum gain of the structured stationarity operator after removing the intrinsic scaling direction. Equivalently, it measures how strongly any unit-size admissible perturbation $(\Delta Q,\Delta R)$ is detected by the residual $B^\top P(t)-RK(t)$. Thus, $c_{QRH}>0$ means that no nonzero normalized structured cost perturbation can leave the stationarity residual identically zero. More importantly, the three indices separate the inverse problem into three mechanisms: recovery of the gain from state richness, separation of $(A,B)$ through gain variation, and injectivity of the structured cost map after scale removal. This decomposition will serve as the organizing principle for the identifiability analysis, the sampled-data reconstruction method, and the perturbation bounds developed in the following sections.

\section{Identifiability and conditioning}\label{sec:identifiability}
This section establishes the continuous-time noiseless identifiability theory for finite-horizon inverse LQR. Let $\Theta^\star:=(A^\star,B^\star,Q^\star,R^\star,H^\star)$
denote the true data-generating tuple. The analysis follows the decomposition introduced in Section~\ref{sec:setup}: state richness determines the recovery of $K^\star(\cdot)$ and $A_c^\star(\cdot)$, gain variation determines the recovery of $(A^\star,B^\star)$, and the residual operator determines the recovery of the normalized structured cost. Throughout Section~\ref{sec:identifiability}, we work in an idealized noiseless setting: the optimal trajectories are available as continuous functions $(x_i(t),u_i(t))$ on $[0,T]$ and satisfy the deterministic dynamics \eqref{eq:dyn}. Direct observation of time derivatives is not assumed. The arguments rely on the matrix relation $U(t)=-K^\star(t)X(t)$ together with the integral form of the closed-loop dynamics introduced in Section~\ref{sec:setup}. Observation noise and sampled-data reconstruction are deferred to Sections~\ref{sec:algorithm} and \ref{sec:noise}, respectively.

\begin{definition}\label{def:behavioral-equivalence}
Fix the noiseless trajectory set $\mathcal D$.
Two admissible tuples
\[
\Theta=(A,B,Q,R,H),\qquad \bar\Theta=(\bar A,\bar B,\bar Q,\bar R,\bar H)
\]
are said to be trajectory-equivalent on $\mathcal D$ if they generate the same state--input trajectories for all data in $\mathcal D$.
The identifiable object of the inverse problem is therefore the trajectory-equivalence class $[\Theta]_{\mathcal D}$.
Unique parameter identification means that this class reduces to a singleton after removing the intrinsic scale ambiguity \eqref{eq:scale-norm}.
\end{definition}
We restrict attention to candidate tuples $(\bar A,\bar B,\bar Q,\bar R,\bar H)$ that define a well-posed LQR problem of the form \eqref{eq:dyn}--\eqref{eq:lqr}, i.e., $\bar Q\succeq 0$, $\bar R\succ 0$, $\bar H\succeq 0$, together with the standing assumptions in Assumption~\ref{ass:standing}.
We say that the true tuple $\Theta^\star$ is globally identifiable from $\mathcal{D}$ within the prescribed admissible class if every admissible tuple $\bar\Theta$ that is trajectory-equivalent to $\Theta^\star$ on $\mathcal D$ satisfies $\bar\Theta=\Theta^\star$.

\subsection{Finite-horizon identifiability of system model}\label{sec:ident-finite}
We first characterize when the open-loop dynamics can be identified from finite-horizon optimal trajectories.
The analysis is constructive: we identify $K^\star(\cdot)$ from the algebraic closed-loop relation, infer the closed-loop dynamics matrix $A_c^\star(\cdot)$ from the state trajectories, and then recover constant $(A^\star,B^\star)$ from the functional identity $A_c^\star(t)=A^\star-B^\star K^\star(t)$.

\begin{lemma}\label{lem:K-ident}
Assume the noiseless trajectory data satisfy $U(t)=-K^\star(t)X(t)$ for all $t\in[0,T]$.
If $c_X>0$ in \eqref{eq:cX}, then $K^\star(t)$ is uniquely determined by
\begin{equation}\label{eq:K-formula}
K^\star(t)=-U(t)X(t)^\top\big(X(t)X(t)^\top\big)^{-1},\quad \forall t\in[0,T].
\end{equation}
\end{lemma}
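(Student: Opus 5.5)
The plan is to treat the relation $U(t)=-K^\star(t)X(t)$ at each fixed $t\in[0,T]$ as a linear matrix equation in the unknown $K^\star(t)\in\R^{m\times n}$. We then show two things: the Gram matrix $X(t)X(t)^\top$ is invertible, and the equation determines $K^\star(t)$ uniquely, with the displayed formula \eqref{eq:K-formula} giving its value.

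\textbf{Invertibility of the Gram matrix.} By the definition \eqref{eq:cX} of $c_X$, for every $t\in[0,T]$ we have
\[
\lambda_{\min}\big(X(t)X(t)^\top\big)\ge c_X>0.
\]
Since $X(t)X(t)^\top\in\mathbb S^n$ is positive semidefinite with strictly positive smallest eigenvalue, it is positive definite. Hence it is invertible, and in addition $\|(X(t)X(t)^\top)^{-1}\|\le 1/c_X$ uniformly in $t$. This uniform bound is not needed for identifiability itself, but it will be reused later in the perturbation analysis.

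\textbf{Deriving the formula.} Right-multiply the identity $U(t)=-K^\star(t)X(t)$ by $X(t)^\top\big(X(t)X(t)^\top\big)^{-1}$. This gives
\[
-U(t)X(t)^\top\big(X(t)X(t)^\top\big)^{-1}=K^\star(t)\,X(t)X(t)^\top\big(X(t)X(t)^\top\big)^{-1}=K^\star(t),
\]
which is exactly \eqref{eq:K-formula}.

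\textbf{Uniqueness.} Suppose $K_1,K_2$ both satisfy $U(t)=-K_j X(t)$. Then $(K_1-K_2)X(t)=0$, and right-multiplying by $X(t)^\top(X(t)X(t)^\top)^{-1}$ gives $K_1=K_2$. Equivalently, $X(t)$ has full row rank $n$, so $M\mapsto MX(t)$ is injective on $\R^{m\times n}$. Consequently, any admissible candidate tuple that reproduces the same data $(X,U)$ must have the same gain function on the whole interval $[0,T]$.

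There is no real obstacle in this argument. The only point needing care is that the conclusion must hold for every $t$, not almost every $t$. This is guaranteed because $c_X$ is defined as an infimum over all of $[0,T]$, so pointwise invertibility holds everywhere. If continuity of the recovered gain is also wanted, it follows from the continuity of $X$ and $U$ together with continuity of matrix inversion on the positive definite cone.
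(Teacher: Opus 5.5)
Your proposal is correct and follows the paper's argument: you right-multiply $U(t)=-K^\star(t)X(t)$ by $X(t)^\top\big(X(t)X(t)^\top\big)^{-1}$, and $c_X>0$ guarantees that the Gram matrix is invertible at every $t\in[0,T]$. Your injectivity argument for uniqueness, via $(K_1-K_2)X(t)=0$, is a slightly more explicit version of the paper's remark that the right-hand side depends only on $(X(t),U(t))$.
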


\begin{proof}
Fix $t$ and suppose $X(t)X(t)^\top$ is invertible.
From $U(t)=-K^\star(t)X(t)$, we right-multiply by $X(t)^\top\big(X(t)X(t)^\top\big)^{-1}$ to obtain
\begin{equation*}
U(t)X(t)^\top\big(X(t)X(t)^\top\big)^{-1}=-K^\star(t)X(t)X(t)^\top\big(X(t)X(t)^\top\big)^{-1}=-K^\star(t),
\end{equation*}
which yields \eqref{eq:K-formula}.
Uniqueness follows because the right-hand side is determined solely by $(X(t),U(t))$.
Together with $c_X>0$, this implies $\lambda_{\min}(X(t)X(t)^\top)\ge c_X>0$ for all $t$, and thus the inversion is valid on the entire horizon.
\end{proof}

The same conclusion can be expressed through the closed-loop integral relation
\[
X(t)-X(0)=\int_0^t A_c^\star(\tau)X(\tau)\,d\tau,\qquad t\in[0,T].
\]
Under the rank condition $\mathrm{rank}(X(t)X(t)^{\top})=\mathrm{rank}(X(t))=n$, this relation determines $A_c^\star(\cdot)$ uniquely. When written in differential form, the corresponding pointwise identity is
\begin{equation}\label{eq:Ac-direct}
A_c^\star(t)=\dot X(t)X(t)^\top\big(X(t)X(t)^\top\big)^{-1}.
\end{equation}
In the noiseless setting, \eqref{eq:Ac-direct} holds almost everywhere. In practice, we avoid numerical differentiation and estimate $A_c^\star(\cdot)$ from state increments (Section~\ref{sec:alg-closedloop}).

We now characterize when the constant matrices $(A^\star,B^\star)$ are uniquely determined by the functional relation $A_c^\star(t)=A^\star-B^\star K^\star(t)$.
The key mechanism is that, in the finite-horizon setting, the true gain $K^\star(t)$ varies with time; when this variation is sufficiently rich, it creates the endogenous information quantified by $c_{AB}$.

\begin{theorem}\label{thm:AB-ident}
Suppose $K^\star(\cdot)$ is continuous and that $K^\star(\cdot)$ and $A_c^\star(\cdot)$ are known on $[0,T]$.
Define
\[
\bar K^\star:=\frac{1}{T}\int_0^T K^\star(t)\,dt,
\qquad
\Delta K^\star(t):=K^\star(t)-\bar K^\star,
\qquad
\mathcal G_{K^\star}:=\int_0^T\Delta K^\star(t)\Delta K^\star(t)^\top\,dt,
\]
\[
\bar A_c^\star:=\frac{1}{T}\int_0^T A_c^\star(t)\,dt,
\qquad
\Delta A_c^\star(t):=A_c^\star(t)-\bar A_c^\star.
\]
Then, $c_{AB}=\lambda_{\min}(\mathcal G_{K^\star})>0$ if and only if $(A^\star,B^\star)$ is the unique constant pair $(A,B)$ satisfying
\begin{equation}\label{eq:B-formula}
B=-\Big(\int_0^T\Delta A_c^\star(t)\,\Delta K^\star(t)^\top\,dt\Big)\,\mathcal{G}_{K^\star}^{-1},
\qquad
A=\bar A_c^\star+B\bar K^\star.
\end{equation}
\end{theorem}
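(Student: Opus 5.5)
The plan is to reduce the functional identity $A_c^\star(t)=A-BK^\star(t)$ to a linear least-squares / normal-equation problem in the unknown constant pair $(A,B)$, and to read off both directions of the equivalence from the invertibility of $\mathcal G_{K^\star}$.

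\textbf{Centering.} Any constant pair $(A,B)$ satisfies $A_c^\star(t)=A-BK^\star(t)$ on $[0,T]$; in particular $(A^\star,B^\star)$ does. Averaging over $[0,T]$ gives $\bar A_c^\star=A-B\bar K^\star$. Subtracting this from the pointwise identity eliminates $A$ and yields the centered relation
\[
\Delta A_c^\star(t)=-B\,\Delta K^\star(t),\qquad t\in[0,T].
\]
Conversely, if a pair $(A,B)$ satisfies the centered relation together with $A=\bar A_c^\star+B\bar K^\star$, then adding the two recovers $A_c^\star(t)=A-BK^\star(t)$. So the whole problem reduces to characterizing the set of $B$ satisfying the centered relation, with $A$ then forced by the averaged identity.

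\textbf{Sufficiency.} Right-multiply the centered relation by $\Delta K^\star(t)^\top$ and integrate (all functions are continuous, hence integrable) to obtain the normal equation
\[
\int_0^T\Delta A_c^\star\,\Delta K^{\star\top}\,dt=-B\,\mathcal G_{K^\star}.
\]
If $c_{AB}>0$, then $\mathcal G_{K^\star}$ is invertible, and every solution $B$ (in particular $B^\star$) equals the expression in \eqref{eq:B-formula}; $A$ is then given by the second formula. This gives uniqueness, and existence holds because $(A^\star,B^\star)$ is itself a solution.

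\textbf{Necessity.} Suppose $c_{AB}=0$. Pick a unit vector $v\in\R^m$ with $v^\top\mathcal G_{K^\star}v=0$, i.e.\ $\int_0^T\|\Delta K^\star(t)^\top v\|^2dt=0$. By continuity, $v^\top\Delta K^\star(t)=0$ for all $t$. For any $w\in\R^n\setminus\{0\}$, set
\[
B'=B^\star+wv^\top,\qquad A'=A^\star+wv^\top\bar K^\star.
\]
Then
\[
A'-B'K^\star(t)=A_c^\star(t)-wv^\top\Delta K^\star(t)=A_c^\star(t),
\]
so $(A',B')\neq(A^\star,B^\star)$ also satisfies the identity, and uniqueness fails. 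In this case the formula in \eqref{eq:B-formula} is not even defined, since $\mathcal G_{K^\star}$ is singular.

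\textbf{Main obstacle.} The main subtlety is interpretive rather than computational: the phrase ``unique pair satisfying \eqref{eq:B-formula}'' should be read as ``unique pair consistent with $A_c^\star=A-BK^\star$, and it is given by \eqref{eq:B-formula}.'' I will state this explicitly. For necessity I also have to pass from the vanishing integral to pointwise vanishing, which is where the continuity of $K^\star$ is used. If admissibility (rank of $B'$, stabilizability) were also demanded of the perturbed pair, I would choose $w$ small: full column rank and stabilizability are open conditions, so they persist under a sufficiently small perturbation.
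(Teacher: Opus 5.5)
Your proposal is correct and follows the paper's proof essentially step for step. You center the identity to get $\Delta A_c^\star(t)=-B\,\Delta K^\star(t)$, use the normal equation with $\mathcal G_{K^\star}$ for sufficiency, and for necessity use the rank-one perturbation $B^\star+wv^\top$, $A^\star+wv^\top\bar K^\star$ along a null direction of $\mathcal G_{K^\star}$, with continuity turning the vanishing integral into pointwise vanishing. Your closing remark goes slightly beyond the paper, which does not make it explicit: taking $w$ small keeps the perturbed pair admissible, since full column rank and stabilizability are open conditions.
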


\begin{proof}
Averaging $A_c^\star(t)=A^\star-B^\star K^\star(t)$ over $t\in[0,T]$ yields
\begin{equation}\label{eq:mean-Ac}
\bar A_c^\star = A^\star-B^\star\bar K^\star.
\end{equation}
Subtracting \eqref{eq:mean-Ac} from $A_c^\star(t)$ gives, for all $t$,
\begin{equation}\label{eq:delta-Ac}
\Delta A_c^\star(t)= -B^\star\Delta K^\star(t).
\end{equation}
Multiplying \eqref{eq:delta-Ac} on the right by $\Delta K^\star(t)^\top$ and integrating over $[0,T]$ yield
\begin{equation}\label{eq:int-identity}
\int_0^T\Delta A_c^\star(t)\,\Delta K^\star(t)^\top\,dt
=-B^\star\int_0^T\Delta K^\star(t)\Delta K^\star(t)^\top\,dt
=-B^\star\mathcal{G}_{K^\star}.
\end{equation}

Sufficiency.
Assume that $c_{AB}>0$.
Then, $\mathcal{G}_{K^\star}$ is invertible, and \eqref{eq:int-identity} yields the first formula in \eqref{eq:B-formula}.
The second formula follows from \eqref{eq:mean-Ac}.
Now, let $(A,B)$ be any constant pair satisfying $A_c^\star(t)=A-BK^\star(t)$ for all $t$.
Repeating the same averaging and centering argument gives
\[
\Delta A_c^\star(t)=-B\Delta K^\star(t),\qquad
\int_0^T\Delta A_c^\star(t)\,\Delta K^\star(t)^\top\,dt=-B\mathcal{G}_{K^\star}.
\]
Since $\mathcal{G}_{K^\star}$ is invertible, we obtain $B=B^\star$, and then $A=A^\star$ from the averaged identity.
Hence, $(A^\star,B^\star)$ is the unique constant pair consistent with $(A_c^\star(\cdot),K^\star(\cdot))$.

Necessity.
Assume that $c_{AB}=0$.
Then, it holds $\mathcal{G}_{K^\star}$ is singular. Hence, there exists a nonzero vector $p\in\R^m$ such that $p^\top\mathcal{G}_{K^\star}p=0$.
By the definition of $\mathcal{G}_{K^\star}$,
\[
0=p^\top\mathcal{G}_{K^\star}p=\int_0^T\|\Delta K^\star(t)^\top p\|_2^2\,dt,
\]
which implies $\Delta K^\star(t)^\top p=0$ for almost all $t$.
Since $K^\star(\cdot)$ is continuous, the map $t\mapsto \Delta K^\star(t)^\top p$ is continuous and must vanish for all $t\in[0,T]$.
Fix any nonzero vector $b\in\R^n$ and define $E:=bp^\top\in\R^{n\times m}$.
Then, for all $t$,
\[
E\Delta K^\star(t)=b\,p^\top\Delta K^\star(t)=0.
\]
Now, define
\begin{equation}\label{eq:AB-perturb}
B:=B^\star+E,\qquad A:=A^\star+E\bar K^\star.
\end{equation}
Using $K^\star(t)=\bar K^\star+\Delta K^\star(t)$ and $E\Delta K^\star(t)=0$, we compute
\begin{align*}
A-BK^\star(t)
&=\big(A^\star+E\bar K^\star\big)-\big(B^\star+E\big)\big(\bar K^\star+\Delta K^\star(t)\big)\\
&=A^\star-B^\star\bar K^\star-B^\star\Delta K^\star(t)+E\bar K^\star-E\bar K^\star-E\Delta K^\star(t)\\
&=A^\star-B^\star K^\star(t)=A_c^\star(t)
\end{align*}
for all $t$.
Because $E\neq 0$, the pair $(A,B)$ is distinct from $(A^\star,B^\star)$.
Hence, when $c_{AB}=0$, the true pair is not uniquely determined by $(A_c^\star(\cdot),K^\star(\cdot))$.
This proves the equivalence.
\end{proof}

\begin{remark}[Interpretation of $c_{AB}$]
The quantity $c_{AB}=\lambda_{\min}(\mathcal{G}_{K^\star})$ measures the directional richness of the gain variation, not only its magnitude.
When $m=1$, the condition $c_{AB}>0$ reduces to $K^\star(\cdot)$ being nonconstant on $[0,T]$.
When $m>1$, time variation alone is not enough: the row-valued variations $\Delta K^\star(t)$ must excite every input direction in $L^2([0,T])$.
Hence, a small empirical value $\hat c_{AB}$ can arise either because $K^\star(\cdot)$ is nearly time invariant, or because its variation is confined to a lower-dimensional input subspace.
In practice, when $\hat c_{AB}$ is small, one can (i) restrict reconstruction to a subinterval on which $K^\star(t)$ varies sufficiently, (ii) combine trajectories collected with different horizons or terminal weights.
\end{remark}

\subsection{Cost identifiability and joint recovery}\label{sec:ident-QRH}
Having identified $(A^\star,B^\star)$, we turn to the quadratic weights.
For fixed $(A^\star,B^\star,K^\star(\cdot))$, the closed-loop optimality identities in Section~\ref{sec:setup} induce the linear residual operator $\mathscr{M}$ in \eqref{eq:M-operator}, whose kernel characterizes the full equivalence class of quadratic costs consistent with the observed closed-loop behavior.

When the system is underactuated ($m<n$), there is an intrinsic ``potential-shaping'' ambiguity: one can modify $(Q,H)$ by adding/subtracting a total derivative term in the objective without changing the optimal control law.
This enlarges the kernel of $\mathscr{M}$ and creates an additional source of cost nonuniqueness beyond the positive scaling \eqref{eq:scale-norm}.
We record the construction for completeness.


\begin{proposition}\label{prop:shaping}
Suppose $\mathrm{rank}(B)=m<n$.
Then, there exists a nonzero matrix $S=S^\top\succeq0$ with $SB=0$.
For any such $S$ and any $\tau\in\R$, define the shaped triple
\begin{equation}\label{eq:shaping}
  Q_\tau:=Q+\tau(A^\top S+SA),\qquad R_\tau:=R,\qquad H_\tau:=H-\tau S.
\end{equation}
Then, for every $x_0\in\R^n$,
\begin{equation}\label{eq:shaping-cost}
  J_{Q_\tau,R_\tau,H_\tau}(u)=J_{Q,R,H}(u)-\tau\,x_0^\top S\,x_0.
\end{equation}
In particular, the optimal control and the associated feedback gain are invariant under the shaping~\eqref{eq:shaping}.
Moreover, if $Q\succ0$, then there exists $\delta>0$ such that,
for every $\tau\in(-\delta,0)$, $(Q_\tau,R_\tau,H_\tau)$ is
admissible and distinct from $(Q,R,H)$, while
$(A,B,Q_\tau,R_\tau,H_\tau)$ is trajectory-equivalent to
$(A,B,Q,R,H)$ for every initial state.
\end{proposition}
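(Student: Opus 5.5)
The proof has four parts: build $S$, establish the cost identity \eqref{eq:shaping-cost} by a total-derivative argument, read off invariance of the optimal control, and then check admissibility for small negative $\tau$.

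\textbf{Existence of $S$.} Since $\mathrm{rank}(B)=m<n$, the left null space $\ker(B^\top)$ is nontrivial. Pick a nonzero $v$ with $B^\top v=0$ and set $S:=vv^\top$. Then $S=S^\top\succeq0$, $S\neq0$, and $SB=v(B^\top v)^\top=0$.

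\textbf{The cost identity.} Take any admissible input $u(\cdot)$ and let $x(\cdot)$ be the resulting trajectory of \eqref{eq:dyn}. Using $\dot x=Ax+Bu$ and $SB=0$ (hence also $B^\top S=0$),
\[
\frac{d}{dt}\big(x^\top Sx\big)=x^\top(A^\top S+SA)x+2u^\top B^\top Sx=x^\top(A^\top S+SA)x .
\]
Integrating over $[0,T]$ gives
\[
\int_0^T x^\top(A^\top S+SA)x\,dt=x(T)^\top Sx(T)-x_0^\top Sx_0 .
\]
Substituting this into $J_{Q_\tau,R,H_\tau}(u)$, the extra running term $\tau\int_0^T x^\top(A^\top S+SA)x\,dt$ cancels the terminal change $-\tau x(T)^\top Sx(T)$ and leaves $-\tau x_0^\top Sx_0$. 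This is \eqref{eq:shaping-cost}.

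\textbf{Invariance of the optimal control.} The difference between the two costs is a constant independent of $u$. So whenever both problems are well posed, the minimizers coincide for every $x_0$. Uniqueness of the LQR optimum then gives the same feedback law, and hence the same gain. Equivalently, $P_\tau=P-\tau S$ solves the Riccati equation \eqref{eq:DRE} for the shaped triple, and $B^\top P_\tau=B^\top P$.

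\textbf{Admissibility for small negative $\tau$ (main obstacle).} The delicate point is sign constraints, since $H_\tau=H-\tau S$ and $Q_\tau$ are both perturbed.
\begin{itemize}
\item For $\tau<0$ we get $H_\tau=H+|\tau|S\succeq H\succeq0$.
\item For $Q_\tau$, write $Q_\tau=Q+\tau(A^\top S+SA)$ with $\lambda_{\min}(Q)>0$. The perturbation is symmetric, so by Weyl's inequality $Q_\tau\succ0$ whenever $|\tau|\,\|A^\top S+SA\|<\lambda_{\min}(Q)$. Take $\delta:=\lambda_{\min}(Q)/\|A^\top S+SA\|$, or $\delta=\infty$ if that norm vanishes.
\item $R_\tau=R\succ0$ is unchanged, so each such triple defines a well-posed LQR problem. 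The normalization \eqref{eq:scale-norm} and Assumption~\ref{ass:standing} are unaffected, since $A$, $B$ and $R$ do not change.
\end{itemize}
Distinctness holds because $H_\tau-H=-\tau S\neq0$ for $\tau\neq0$.

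Trajectory-equivalence for every initial state then follows from the invariance of the optimal feedback law established above.
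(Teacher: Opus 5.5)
Your proof is correct and follows essentially the same route as the paper. Both arguments use the rank-one choice $S=vv^\top$ with $v\in\ker(B^\top)$, the total-derivative identity for $x^\top Sx$ (where $SB=0$ kills the cross term), Weyl's inequality to keep $Q_\tau\succ0$ for small $|\tau|$, and $H_\tau=H+|\tau|S\succeq0$ for $\tau<0$. Your extra observation that $P_\tau=P-\tau S$ solves the shaped Riccati equation with $B^\top P_\tau=B^\top P$ is a valid independent check of gain invariance, but the cost identity alone already gives everything the statement requires.
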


\begin{proof}
Because $\mathrm{rank}(B)=m<n$, the nullspace of $B^\top$ is nontrivial.
Choose $0\neq v\in\ker(B^\top)$ and set $S:=vv^\top$.
Then, $S=S^\top\succeq0$ and
\[
SB=vv^\top B=v(B^\top v)^\top=0.
\]
Let $x(\cdot)$ be the state trajectory under the control input $u(\cdot)$.
Expanding the objectives gives
\begin{equation}\label{J_1}
    J_{Q_\tau,R_\tau,H_\tau}(u)-J_{Q,R,H}(u)
=
\tau\!\left(\int_0^T x(t)^\top(A^\top S+SA)x(t)\,dt\ -\ x(T)^\top S x(T)\right).
\end{equation}
Since $\dot x=Ax+Bu$, it holds
\[
\frac{d}{dt}\big(x^\top S x\big)
= x^\top(A^\top S+SA)x + 2x^\top SBu.
\]
The condition $SB=0$ implies $x^\top SBu = 0$, hence
\[
\int_0^T x(t)^\top(A^\top S+SA)x(t)\,dt = x(T)^\top S x(T)-x(0)^\top S x(0).
\]
Substituting the preceding expression for the integral term in
\eqref{J_1} yields \eqref{eq:shaping-cost}.
The difference between the two objectives is therefore independent of $u(\cdot)$, thus, the minimizers and the induced optimal feedback law are unchanged.

Now, assume $Q\succ0$ and choose $\tau<0$.
Since $A^\top S+SA$ is symmetric, Weyl's inequality gives
\[
\lambda_{\min}(Q_\tau)
\ge \lambda_{\min}(Q)-|\tau|\,\|A^\top S+SA\|.
\]
Therefore, $Q_\tau\succ0$ whenever $|\tau|\,\|A^\top S+SA\|<\lambda_{\min}(Q)$.
Because $\tau<0$ and $S\succeq0$, one has
\[
H_\tau=H+|\tau|S\succeq0,
\qquad
R_\tau=R\succ0.
\]
Moreover, $H_\tau\neq H$ since $S\neq0$ and $\tau\neq0$, thus, the shaped triple is distinct from $(Q,R,H)$.
Equation~\eqref{eq:shaping-cost} then shows that $(Q_\tau,R_\tau,H_\tau)$ is trajectory-equivalent to $(Q,R,H)$.
This proves the last claim.
\end{proof}

\begin{remark}
If $m=n$ and $B$ is nonsingular, then $SB=0$ implies $S=0$.
Hence, the potential-shaping ambiguity in Proposition~\ref{prop:shaping} disappears in the fully actuated case.
Any remaining cost nonuniqueness then arises only from the intrinsic positive-scaling direction of the residual operator.
\end{remark}

Proposition~\ref{prop:shaping} shows that when the system is underactuated, the cost weights can be modified without changing the optimal control and the resulting closed-loop trajectory. When $Q^\star\succ0$, this implies that the cost weights cannot be uniquely recovered from trajectory data unless further structure is imposed. We therefore restrict attention to the parametrization
$H=\alpha Q$ with fixed $\alpha$ given by Assumption~\ref{ass:terminal-prop}.
Under this restriction, any remaining non-uniqueness is characterized by the kernel of the operator $\mathscr{M}_\alpha$ defined in~\eqref{eq:Malpha-def}.


\begin{theorem}\label{thm:QRH-kernel}
Fix the true system matrices $(A^\star,B^\star)$, the induced
optimal gain $K^\star(\cdot)$, and the known scalar $\alpha\geq0$.
Let $\mathscr{M}_\alpha$ denote the corresponding structured
stationarity residual operator, and define
\begin{equation}\label{eq:Falphanorm}
\mathcal{F}_{\alpha,m}:=\left\{(Q,R)\in\mathbb{S}^n\times\mathbb{S}^m:
Q\succeq0,\ R\succ0,\ \operatorname{tr}(R)=m,\ \mathscr{M}_\alpha(Q,R)\equiv0\right\}.
\end{equation}
Let $(Q^\star,R^\star)\in\mathcal{F}_{\alpha,m}$ be the true
normalized structured cost-weight pair, with
$H^\star=\alpha Q^\star$. Then, the following assertions hold.
\begin{enumerate}[label=(\alph*),leftmargin=*]
\item If $c_{QRH}>0$, then
\[
\mathcal{F}_{\alpha,m}
=
\{(Q^\star,R^\star)\}.
\]
\item If $Q^\star\succ0$, then
\[
\mathcal{F}_{\alpha,m}
=
\{(Q^\star,R^\star)\}
\quad\Longleftrightarrow\quad
c_{QRH}>0.
\]
\end{enumerate}
\end{theorem}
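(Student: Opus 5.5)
Both parts rest on the linearity of $\mathscr{M}_\alpha$, which the paper obtains from Lemma~\ref{lem:terminal-lyap}: for fixed $(A^\star,B^\star,K^\star(\cdot))$, the map $(Q,R)\mapsto P_{Q,R,\alpha Q}$ is linear. For part (a), I will take any $(Q,R)\in\mathcal F_{\alpha,m}$ and set $(\Delta Q,\Delta R):=(Q-Q^\star,R-R^\star)$. This pair is symmetric and satisfies $\operatorname{tr}(\Delta R)=m-m=0$, so it lies in $\mathcal V_\alpha$. By linearity,
\[
\mathscr M_\alpha(\Delta Q,\Delta R)=\mathscr M_\alpha(Q,R)-\mathscr M_\alpha(Q^\star,R^\star)\equiv 0 .
\]
If $(\Delta Q,\Delta R)\neq0$, normalize it to unit Frobenius norm. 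Homogeneity then gives an element of the unit sphere in $\mathcal V_\alpha$ with zero $L^2$ residual, which contradicts $c_{QRH}>0$. Hence $(Q,R)=(Q^\star,R^\star)$. Since $(Q^\star,R^\star)\in\mathcal F_{\alpha,m}$ by hypothesis, equality of the sets follows.

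For part (b), the direction ``$\Leftarrow$'' is exactly part (a). For ``$\Rightarrow$'' I argue by contraposition and assume $c_{QRH}=0$. First I need an exact kernel element, not merely a vanishing infimum. The space $\mathcal V_\alpha$ is finite dimensional and its unit sphere is compact, so it suffices that $(\Delta Q,\Delta R)\mapsto\|\mathscr M_\alpha(\Delta Q,\Delta R)\|_{L^2}$ is continuous. This holds because $\mathscr M_\alpha$ is a linear map from a finite-dimensional space into $L^2([0,T])$, hence bounded; concretely, $P$ is given by the explicit integral formula of Lemma~\ref{lem:terminal-lyap} with $\Phi$ continuous. The infimum is therefore attained, and there is $(\Delta Q,\Delta R)\in\mathcal V_\alpha$ with unit norm and $\mathscr M_\alpha(\Delta Q,\Delta R)=0$ in $L^2$. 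Since $P(\cdot)$ and $K^\star(\cdot)$ are continuous, the residual is continuous and so vanishes identically on $[0,T]$.

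Next I perturb the true pair along this direction: $(Q_\varepsilon,R_\varepsilon):=(Q^\star+\varepsilon\Delta Q,\;R^\star+\varepsilon\Delta R)$. Linearity gives $\mathscr M_\alpha(Q_\varepsilon,R_\varepsilon)\equiv0$, and $\operatorname{tr}(R_\varepsilon)=m$ because $\operatorname{tr}(\Delta R)=0$. For admissibility I use Weyl's inequality, as in Proposition~\ref{prop:shaping}. Because $Q^\star\succ0$ and $R^\star\succ0$, choosing $|\varepsilon|$ below both
\[
\lambda_{\min}(Q^\star)/\|\Delta Q\| \quad\text{and}\quad \lambda_{\min}(R^\star)/\|\Delta R\|
\]
keeps $Q_\varepsilon\succ0$ and $R_\varepsilon\succ0$. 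Then $(Q_\varepsilon,R_\varepsilon)\in\mathcal F_{\alpha,m}$ and differs from $(Q^\star,R^\star)$ for $\varepsilon\neq0$, so $\mathcal F_{\alpha,m}$ is not a singleton.

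The main obstacle is exactly where $Q^\star\succ0$ is needed. If $Q^\star$ were only positive semidefinite, a kernel direction with $\Delta Q$ indefinite on $\ker Q^\star$ would leave the admissible cone for every $\varepsilon\neq0$ of either sign; this is why (a) is only a one-way implication in general. The other delicate point is attaining the infimum, which the finite-dimensionality and compactness argument above handles. I take $\mathcal F_{\alpha,m}$ as defined, relative to the fixed true $K^\star$, so I do not need to show that $(Q_\varepsilon,R_\varepsilon)$ regenerates $K^\star$ through the Riccati equation.
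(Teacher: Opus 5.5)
Your proposal is correct and follows the paper's proof essentially step for step. For (a) you use the difference-in-$\mathcal V_\alpha$ plus linearity argument. For the converse in (b) you use compactness of the unit sphere of $\mathcal V_\alpha$ to obtain an exact kernel direction, then perturb $(Q^\star,R^\star)$ slightly along it, staying inside the open positive-definite cones with $\operatorname{tr}(R)=m$ preserved; your continuity argument ($L^2$-zero $\Rightarrow$ $\equiv 0$) and explicit Weyl bound on $\varepsilon$ simply make precise two steps the paper leaves implicit.
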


\begin{proof}
Because the unit sphere in $\mathcal{V}_\alpha$ is compact and the map
\[
(\Delta Q,\Delta R)\mapsto \big\|\mathscr{M}_\alpha(\Delta Q,\Delta R)\big\|_{L^2([0,T])}
\]
is continuous, $c_{QRH}>0$ is equivalent to
\[
\ker(\mathscr{M}_\alpha)\cap\mathcal{V}_\alpha=\{(0_{n\times n},0_{m\times m})\}.
\]

Sufficiency.
Assume $c_{QRH}>0$ and let $(Q,R)\in\mathcal{F}_{\alpha,m}$.
Then, $(\Delta Q,\Delta R):=(Q-Q^\star,R-R^\star)$ belongs to $\mathcal{V}_\alpha$ because both pairs satisfy the same trace normalization.
Moreover, by the linearity of $\mathscr{M}_\alpha$,
\[
\mathscr{M}_\alpha(\Delta Q,\Delta R)=\mathscr{M}_\alpha(Q,R)-\mathscr{M}_\alpha(Q^\star,R^\star)\equiv0.
\]
Hence, $(\Delta Q,\Delta R)\in\ker(\mathscr{M}_\alpha)\cap\mathcal{V}_\alpha$.
The trivial-intersection property implied by $c_{QRH}>0$ gives $(\Delta Q,\Delta R)=(0,0)$, namely $(Q,R)=(Q^\star,R^\star)$.
Therefore, $\mathcal{F}_{\alpha,m}=\{(Q^\star,R^\star)\}$.

Necessity under $Q^\star\succ0$.
Assume now that $Q^\star\succ0$ and $c_{QRH}=0$.
Then, there exists a nonzero direction $(\Delta Q,\Delta R)\in\mathcal{V}_\alpha$ such that $\mathscr{M}_\alpha(\Delta Q,\Delta R)\equiv0$.
Since $Q^\star\succ0$ and $R^\star\succ0$, the positive definite cones are open.
Therefore, for all sufficiently small $|\varepsilon|>0$,
\[
Q^\star+\varepsilon\Delta Q\succ0,\qquad R^\star+\varepsilon\Delta R\succ0.
\]
Because $\mathrm{tr}(\Delta R)=0$, the trace normalization is preserved, and the linearity gives
\[
\mathscr{M}_\alpha(Q^\star+\varepsilon\Delta Q,\ R^\star+\varepsilon\Delta R)
=\mathscr{M}_\alpha(Q^\star,R^\star)+\varepsilon\mathscr{M}_\alpha(\Delta Q,\Delta R)\equiv0.
\]
Hence, $(Q^\star+\varepsilon\Delta Q,\ R^\star+\varepsilon\Delta R)\in\mathcal{F}_{\alpha,m}$ for all sufficiently small nonzero $|\varepsilon|$, and these pairs are distinct from $(Q^\star,R^\star)$ because $(\Delta Q,\Delta R)\neq0$.
Thus, $\mathcal{F}_{\alpha,m}$ is not a singleton.
The contrapositive proves part (b).
\end{proof}

Theorem~\ref{thm:QRH-kernel} shows that $c_{QRH}$ is the relevant conditioning index for exact recovery of the normalized structured cost: positivity of $c_{QRH}$ yields uniqueness, and under $Q^\star\succ0$ the same condition is also necessary.
We now combine the previous results into a single joint identifiability statement for the full inverse problem.

\begin{theorem}[Joint identifiability]\label{thm:joint-ident}
Consider noiseless optimal trajectories generated over $[0,T]$ by the
true tuple
\[\Theta^\star=(A^\star,B^\star,Q^\star,R^\star,H^\star)\]
under Assumptions~\ref{ass:standing} and
\ref{ass:terminal-prop}, with $\mathrm{tr}(R^\star)=m$.
If
$c_{AB}>0,\qquad c_{QRH}>0$, then $\Theta^\star$ is globally identifiable from these data within
the admissible class satisfying $H=\alpha Q$ and
$\mathrm{tr}(R)=m$.
Moreover, if $Q^\star\succ0$, then $c_{QRH}>0$ is necessary for
global identifiability within this class.
\end{theorem}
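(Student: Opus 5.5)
The argument chains the three earlier results. Let $\bar\Theta=(\bar A,\bar B,\bar Q,\bar R,\bar H)$ be an admissible tuple with $\bar H=\alpha\bar Q$ and $\mathrm{tr}(\bar R)=m$ that is trajectory-equivalent to $\Theta^\star$ on $\mathcal D$. Then $\bar\Theta$ reproduces the same $X(\cdot)$ and $U(\cdot)$. Its optimal gain $\bar K(\cdot)=\bar R^{-1}\bar B^\top\bar P(\cdot)$ is continuous, so $U(t)=-\bar K(t)X(t)$ for all $t$. By Assumption~\ref{ass:richIC}, $c_X>0$, and Lemma~\ref{lem:K-ident} gives $\bar K(t)=-U(t)X(t)^\top(X(t)X(t)^\top)^{-1}=K^\star(t)$ on $[0,T]$.

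Next we treat the closed-loop matrix. Both tuples satisfy $X(t)-X(0)=\int_0^t A_c(\tau)X(\tau)\,d\tau$ with $A_c$ continuous. Differentiating gives $(\bar A_c(t)-A_c^\star(t))X(t)=0$ for all $t$, and full row rank of $X(t)$ yields $\bar A-\bar B K^\star(t)=\bar A_c(t)=A_c^\star(t)$. Now $(\bar A,\bar B)$ is a constant pair satisfying $A_c^\star(t)=A-BK^\star(t)$, so the uniqueness part of Theorem~\ref{thm:AB-ident}, under $c_{AB}>0$, gives $(\bar A,\bar B)=(A^\star,B^\star)$.

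With the dynamics and the gain fixed, $\bar P$ solves the Riccati equation \eqref{eq:DRE} for $(A^\star,B^\star,\bar Q,\bar R)$ with $\bar P(T)=\alpha\bar Q$. Using $\bar K=K^\star$ and $\bar B^\top\bar P=\bar R\bar K$, the Riccati equation rewrites as the closed-loop Lyapunov equation \eqref{eq:P-operator} with the true $A_c^\star$ and $K^\star$. By the uniqueness in Lemma~\ref{lem:terminal-lyap}, $\bar P=P_{\bar Q,\bar R,\alpha\bar Q}$. Stationarity then reads $\mathscr M_\alpha(\bar Q,\bar R)\equiv0$ for the operator built from $(A^\star,B^\star,K^\star)$, so $(\bar Q,\bar R)\in\mathcal F_{\alpha,m}$. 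Theorem~\ref{thm:QRH-kernel}(a) gives $(\bar Q,\bar R)=(Q^\star,R^\star)$, and hence $\bar H=\alpha\bar Q=H^\star$. This proves $\bar\Theta=\Theta^\star$.

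For necessity when $Q^\star\succ0$, suppose $c_{QRH}=0$. Theorem~\ref{thm:QRH-kernel}(b) provides $(Q_\varepsilon,R_\varepsilon)\ne(Q^\star,R^\star)$ in $\mathcal F_{\alpha,m}$, with $Q_\varepsilon\succ0$ and $H_\varepsilon=\alpha Q_\varepsilon\succeq0$. We must show that the tuple $(A^\star,B^\star,Q_\varepsilon,R_\varepsilon,\alpha Q_\varepsilon)$ is trajectory-equivalent. This is the main obstacle: membership in $\mathcal F_{\alpha,m}$ only says that $P:=P_{Q_\varepsilon,R_\varepsilon,\alpha Q_\varepsilon}$ satisfies the Lyapunov equation with $B^{\star\top}P=R_\varepsilon K^\star$. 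I plan to substitute $K^\star=R_\varepsilon^{-1}B^{\star\top}P$ into the Lyapunov equation. This gives $K^{\star\top}R_\varepsilon K^\star=PB^\star R_\varepsilon^{-1}B^{\star\top}P$ and $PB^\star K^\star=PB^\star R_\varepsilon^{-1}B^{\star\top}P$, and the Lyapunov equation collapses to the Riccati equation for the perturbed cost. By uniqueness of the Riccati solution, $P$ is that cost's Riccati matrix, so its optimal gain is $R_\varepsilon^{-1}B^{\star\top}P=K^\star$. The closed loops then coincide and produce identical trajectories from every initial state. Stabilizability and $\mathrm{rank}(B^\star)=m$ are unchanged, so the tuple is admissible and distinct from $\Theta^\star$, which contradicts identifiability.
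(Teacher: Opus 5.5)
Your proposal is correct and follows essentially the same route as the paper. Both arguments run through the same steps: gain recovery via Lemma~\ref{lem:K-ident}, matching of the closed-loop matrices via the integral relation and full row rank of $X(t)$, separation of $(A,B)$ via Theorem~\ref{thm:AB-ident}, cost uniqueness via Theorem~\ref{thm:QRH-kernel}(a), and necessity by collapsing the Lyapunov--stationarity pair back into the Riccati equation for the alternative cost. Your explicit use of the uniqueness in Lemma~\ref{lem:terminal-lyap} to identify $\bar P$ with $P_{\bar Q,\bar R,\alpha\bar Q}$ fills in a step that the paper only asserts.
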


\begin{proof}
Assumption~\ref{ass:richIC}, together with the nonsingularity of the
closed-loop state transition matrix, implies
$\mathrm{rank}(X(t))=n, t\in[0,T]$. Since $X(\cdot)$ is continuous and $[0,T]$ is compact, it follows that
$c_X>0$. Hence, Lemma~\ref{lem:K-ident} applies throughout the
horizon.

\emph{Sufficiency.}
Assume $c_{AB}>0$ and $c_{QRH}>0$, and let
$\bar\Theta=(\bar A,\bar B,\bar Q,\bar R,\bar H)$ be an admissible tuple in the normalized structured class that is
trajectory-equivalent to $\Theta^\star$.
The two tuples generate the same $X(t)$ and $U(t)$.
Therefore, Lemma~\ref{lem:K-ident} implies that they induce the same
gain $K^\star(t)$ for every $t\in[0,T]$.

The true Riccati solution $P^\star(\cdot)$ is continuously
differentiable, and hence $K^\star(\cdot)$ is continuous.
Define
\[
A_c^\star(t):=A^\star-B^\star K^\star(t),\qquad
\bar A_c(t):=\bar A-\bar B K^\star(t).
\]
Since the state trajectories coincide, both tuples satisfy
\[
X(t)-X(0)=\int_0^t A_c^\star(\tau)X(\tau)\,d\tau=\int_0^t \bar A_c(\tau)X(\tau)\,d\tau .
\]
Set $G(t):=\bigl(A_c^\star(t)-\bar A_c(t)\bigr)X(t)$. Then
$\int_0^t G(\tau)\,d\tau=0, t\in[0,T].$ The function $G(\cdot)$ is continuous, so the fundamental theorem of
calculus gives
$G(t)=0, t\in[0,T].$
Since $X(t)$ has full row rank,
\[
A_c^\star(t)-\bar A_c(t)=G(t)X(t)^\top\bigl(X(t)X(t)^\top\bigr)^{-1}=0.
\]
Thus, $A_c^\star(\cdot)=\bar A_c(\cdot).$ Theorem~\ref{thm:AB-ident}, together with $c_{AB}>0$, then yields
\[
(\bar A,\bar B)=(A^\star,B^\star).
\]
With $(A^\star,B^\star,K^\star(\cdot))$ fixed, the closed-loop
Lyapunov and stationarity identities for the two tuples imply
$
(Q^\star,R^\star),
\ (\bar Q,\bar R)
\in
\mathcal F_{\alpha,m}
$. Since $c_{QRH}>0$, Theorem~\ref{thm:QRH-kernel}(a) gives
\[
(\bar Q,\bar R)=(Q^\star,R^\star).
\]
Finally, $H=\alpha Q$ yields
$\bar H=H^\star$. Therefore, $\bar\Theta=\Theta^\star$, proving global
identifiability.

\emph{Necessary.}
Suppose now that $Q^\star\succ0$ and $c_{QRH}=0$.
By Theorem~\ref{thm:QRH-kernel}(b), there exists
$(\widetilde Q,\widetilde R)\in\mathcal F_{\alpha,m}\setminus\{(Q^\star,R^\star)\}$. Set
$\widetilde H:=\alpha\widetilde Q$ and let
$\widetilde P:=P_{\widetilde Q,\widetilde R,\widetilde H}.$ By the definition of $\mathcal F_{\alpha,m}$,
\[
-\dot{\widetilde P}(t)=
A_c^\star(t)^\top\widetilde P(t)
+
\widetilde P(t)A_c^\star(t)
+
\widetilde Q
+
K^\star(t)^\top\widetilde R K^\star(t), \qquad
\widetilde P(T)=\widetilde H,
\]
and
\[
B^{\star\top}\widetilde P(t)
=
\widetilde R K^\star(t).
\]
Using $A_c^\star(t)=A^\star-B^\star K^\star(t),$ the preceding identities give
\[
\begin{aligned}
-\dot{\widetilde P}(t)
&=
A^{\star\top}\widetilde P(t)
+
\widetilde P(t)A^\star
-
\widetilde P(t)B^\star
\widetilde R^{-1}
B^{\star\top}\widetilde P(t)
+
\widetilde Q, \\
K^\star(t)
&=
\widetilde R^{-1}
B^{\star\top}\widetilde P(t).
\end{aligned}
\]
Hence, $(A^\star,B^\star,\widetilde Q,\widetilde R,\widetilde H)
$ is an admissible LQR tuple whose unique optimal gain is
$K^\star(\cdot)$. Since its dynamics are also
$(A^\star,B^\star)$, it generates the same state--input trajectories
as $\Theta^\star$.

The two tuples are distinct because
\[
(\widetilde Q,\widetilde R)
\neq
(Q^\star,R^\star).
\]
Therefore, $\Theta^\star$ is not globally identifiable when
$c_{QRH}=0$. This proves that $c_{QRH}>0$ is necessary under
$Q^\star\succ0$.
\end{proof}

\section{Conditioning-aware estimation from sampled noisy trajectory data}\label{sec:algorithm}
This section converts the continuous-time identifiability decomposition of Section~\ref{sec:identifiability} into the proposed sampled-data reconstruction method built from noisy state-input measurements. The stages follow the theory: temporal denoising, gain reconstruction, local reconstruction of the closed-loop dynamics matrix, recovery of the constant pair $(A^\star,B^\star)$, and structured cost recovery. The empirical diagnostics $(\hat c_X,\hat c_{AB},\hat c_{QRH})$ are computed alongside these stages. Classical Richardson extrapolation is used only in the local reconstruction of $A_c^\star(\cdot)$, where it serves as a bias-cancellation device for sampled increments.

\subsection{Sampling model and temporal denoising}\label{sec:noise-assumptions}
We observe the trajectories at sampling times $t_k:=k\Delta t$, $k=0,1,\dots,L$, with $T=L\Delta t$.
For each trajectory $i$ and sample index $k$, set
\begin{equation}\label{eq:obs-noise}
\tilde x_{i,k}=x_i(t_k)+v^x_{i,k},\qquad \tilde u_{i,k}=u_i(t_k)+v^u_{i,k},
\end{equation}
where $v^x_{i,k}\in\R^n$ and $v^u_{i,k}\in\R^m$ are observation noises.
We write
\begin{equation*}
\tilde X_k:=[\tilde x_{1,k}\ \cdots\ \tilde x_{N,k}]\in\R^{n\times N},\qquad
\tilde U_k:=[\tilde u_{1,k}\ \cdots\ \tilde u_{N,k}]\in\R^{m\times N}.
\end{equation*}
All statements below are non-asymptotic. 

\begin{assumption}[Observation noise]\label{ass:noise}
The observation noises in \eqref{eq:obs-noise} satisfy:
\begin{enumerate}[label=(B\arabic*),leftmargin=*]
\item\label{ass:noise-mean} $\E[v^x_{i,k}]=0$, $\E[v^u_{i,k}]=0$, $i=1,2,\cdots N,k=0,1,\cdots L$.
\item\label{ass:noise-indep} The family $\{(v^x_{i,k},v^u_{i,k})\}_{i,k}$ is independent across $(i,k)$ and independent of the noiseless trajectories $\{(x_i(t_k),u_i(t_k))\}_{i,k}$.
\item\label{ass:noise-subg} For some parameters $\sigma_x,\sigma_u>0$, each $v^x_{i,k}$ is $\sigma_x$-sub-Gaussian and each $v^u_{i,k}$ is $\sigma_u$-sub-Gaussian, i.e.,
\[\E[\exp(\langle a,v^x_{i,k}\rangle)]\le \exp\big(\tfrac{\sigma_x^2\|a\|_2^2}{2}\big),\qquad \E[\exp(\langle b,v^u_{i,k}\rangle)]\le \exp\big(\tfrac{\sigma_u^2\|b\|_2^2}{2}\big)\]
for all $a\in\R^n$ and $b\in\R^m$.
\end{enumerate}
\end{assumption}

Because the trajectories solve a deterministic ODE, each component of $t\mapsto x_i(t)$ is a smooth function.
Therefore, dense sampling enables temporal denoising.
We use the following local-averaging smoother.

\begin{definition}[Local-averaging smoother]\label{def:smoother}
Fix a denoising bandwidth $\BW\in[\Delta t,T]$ and let $M:=\lfloor \BW/\Delta t\rfloor$.
For each $k\in\{0,\dots,L\}$, define
\[
I_k:=\{j\in\{0,\dots,L\}: |j-k|\le M\},
\qquad
\nu_k:=|I_k|.
\]
The denoised samples are
\begin{equation}\label{eq:smoother}
\hat x_{i,k}:=\frac{1}{\nu_k}\sum_{j\in I_k}\tilde x_{i,j},\qquad
\hat u_{i,k}:=\frac{1}{\nu_k}\sum_{j\in I_k}\tilde u_{i,j},
\qquad k=0,\dots,L.
\end{equation}
Thus, boundary nodes are handled by truncated windows automatically. Let $\hat X_k$ and $\hat U_k$ be the block matrices formed from $\{\hat x_{i,k}\}$ and $\{\hat u_{i,k}\}$.
\end{definition}

\subsection{Reconstruction procedure}\label{sec:alg-reconstruction}

\subsubsection{Closed-loop reconstruction: estimating $K^\star(\cdot)$ and $A_c^\star(\cdot)$}\label{sec:alg-closedloop}
We start from the closed-loop relation $U(t)=-K^\star(t)X(t)$.
On the sampling grid, we compute the pointwise least-squares estimator
\begin{equation}\label{eq:Khat-ls}
\hat K_k:=-\hat U_k\hat X_k^\top\big(\hat X_k\hat X_k^\top\big)^{-1},\qquad k=0,1,\dots,L,
\end{equation}
provided that $\hat X_k\hat X_k^\top$ is invertible.
When the cost-recovery stage requires a continuous gain trajectory, we use the piecewise-linear interpolant $\hat K(\cdot)$ satisfying $\hat K(t_k)=\hat K_k$ for all $k$.
This auxiliary interpolation introduces no additional estimation stage; it only supplies a continuous-time coefficient for the Lyapunov equations used below.
As a computable proxy for the state-richness index $c_X$ in \eqref{eq:cX}, we report
\begin{equation}\label{eq:cX-hat}
\hat c_X:=\min_{0\le k\le L}\ \lambda_{\min}(\hat X_k\hat X_k^\top).
\end{equation}

We next estimate the closed-loop dynamics matrix $A_c^\star(\cdot)$ without numerical differentiation.
Fix an even window length $\Delta=\ell\Delta t$ with $\ell\ge 2$.
For $k=0,1,\dots,L-\ell$, define
\[
\hat Y_k:=\hat X_{k+\ell}-\hat X_k,
\qquad
\hat Y_k^{(1/2)}:=\hat X_{k+\ell/2}-\hat X_k,
\qquad
\hat G_k:=\hat X_k\hat X_k^\top.
\]
The one-window increment quotient is introduced only as an intermediate quantity
\begin{equation}\label{eq:Ac-local-est}
\widetilde A_{c,\Delta}(t_k):=\frac{1}{\Delta}\,\hat Y_k\hat X_k^\top\hat G_k^{-1},
\end{equation}
and the estimator used throughout the remainder of the paper is the Richardson-corrected version \cite{richardson1911approx,joyce1971survey}
\begin{equation}\label{eq:Ac-local-est2}
\hat A_c(t_k):=\frac{1}{\Delta}\,\big(4\hat Y_k^{(1/2)}-\hat Y_k\big)\hat X_k^\top\hat G_k^{-1}.
\end{equation}
Here, classical Richardson extrapolation is used purely as a local bias-cancelation device for reconstructing the continuous-time closed-loop dynamics matrix $A_c^\star(\cdot)$ from sampled data. The correction cancels the leading local discretization term while remaining derivative free.
Whenever the cost stage requires a continuous-time closed-loop dynamics matrix, we use the piecewise-linear interpolant through the values in \eqref{eq:Ac-local-est2} on $[0,T-\Delta]$ and extend it to $[T-\Delta,T]$ by the terminal value at $t_{L-\ell}$.
This reconstructed closed-loop dynamics matrix is used both in the dynamics stage and in the cost-recovery stage below.

\subsubsection{Recovery of system matrices and cost weights}\label{sec:alg-AB}
With $\{\hat K_k\}$ and $\hat A_c(\cdot)$ available, we recover the constant pair $(A^\star,B^\star)$ from the discretized counterpart of Theorem~\ref{thm:AB-ident}.
Throughout the sampled-data reconstruction, $w_k$ denotes the weight associated with the uniform grid $t_k=k\Delta t$, namely,
\[
w_0=w_L=\frac{\Delta t}{2},
\qquad
w_k=\Delta t,\quad 1\le k\le L-1.
\]
Hence, $\sum_{k=0}^{L} w_k=T$, the quantities below are weighted time averages that approximate their continuous-time counterparts.
Define
\[
\bar{\hat K}:=\frac{1}{T}\sum_{k=0}^{L} w_k\,\hat K_k,
\qquad
\bar{\hat A}_c:=\frac{1}{T}\sum_{k=0}^{L} w_k\,\hat A_c(t_k),
\]
and let $\Delta\hat K_k:=\hat K_k-\bar{\hat K}$ and $\Delta\hat A_c(t_k):=\hat A_c(t_k)-\bar{\hat A}_c$.
We then form the empirical gain-variation Gramian and its associated cross term
\begin{equation}\label{eq:GK-hat}
\hat{\mathcal G}_K:=\sum_{k=0}^{L} w_k\,\Delta\hat K_k\,\Delta\hat K_k^\top,
\qquad
\hat C_{AK}:=\sum_{k=0}^{L} w_k\,\Delta\hat A_c(t_k)\,\Delta\hat K_k^\top,
\qquad
\hat c_{AB}:=\lambda_{\min}(\hat{\mathcal G}_K),
\end{equation}
and, whenever $\hat c_{AB}$ is bounded away from zero, compute
\begin{equation}\label{eq:AB-hat}
\hat B:=-\hat C_{AK}\hat{\mathcal G}_K^{-1},
\qquad
\hat A:=\bar{\hat A}_c+\hat B\bar{\hat K}.
\end{equation}
The quantity $\hat c_{AB}$ is the empirical diagnostic for this stage.
Its continuous-time counterpart is $c_{AB}$ from \eqref{eq:GK}, and the discretized counterpart introduced in Section~\ref{sec:noise} is $c_{AB,L}$.
The same three-level convention is used for the cost index: $c_{QRH}$ in continuous time, $c_{QRH,L}$ on the sampling grid with the true model, and $\hat c_{QRH}$ for the empirical matrix assembled from $(\hat A_c,\hat B,\hat K)$.
We next recover the cost weighting matrices.
Under Assumption~\ref{ass:terminal-prop}, the terminal weight is parameterized by $H=\alpha Q$. Hence, the free cost variables are $(Q,R)$.
Using the same interpolants $\hat K(\cdot)$ and $\hat A_c(\cdot)$ as above, we impose the sampled stationarity relation
\begin{equation}\label{eq:stationarity-hat}
\hat B^\top P(t)\approx R\hat K(t),
\end{equation}
where $P(\cdot)$ solves
\begin{equation}\label{eq:P-hat}
-\dot P=\hat A_c^\top P+P\hat A_c+Q+\hat K^\top R\hat K,
\qquad P(T)=\alpha Q.
\end{equation}
Once $\hat A_c(\cdot)$ and $\hat K(\cdot)$ are fixed, \eqref{eq:P-hat} is linear in the running state weight, the running input weight, and the terminal matrix.
Accordingly, we represent $P(\cdot)$ as the superposition of basis responses.
Let $\{E_j^Q\}_{j=1}^{d_Q}$ and $\{E_j^R\}_{j=1}^{d_R}$ be Frobenius-orthonormal bases of $\mathbb S^n$ and $\mathbb S^m$, respectively.
Parameterize
\begin{equation}\label{eq:QRH-basis}
Q=\sum_{j=1}^{d_Q} q_j E_j^Q,
\qquad
R=\sum_{j=1}^{d_R} r_j E_j^R,
\qquad
H=\alpha Q,
\end{equation}
where $d_Q:=\frac{n(n+1)}{2}$ and $d_R:=\frac{m(m+1)}{2}$ denote the dimensions of the symmetric matrix spaces
$\mathbb{S}^n$ and $\mathbb{S}^m$, respectively, and $q:=(q_1,\dots,q_{d_Q})^\top\in\R^{d_Q}$ and $r:=(r_1,\dots,r_{d_R})^\top\in\R^{d_R}$ are the coefficient vectors.
We then collect the free coefficients into $\theta:=(q^\top,r^\top)^\top\in\R^{d_\theta}$ with $d_{\theta}=d_Q+d_R$.
Because the bases are Frobenius-orthonormal, every perturbation pair $(\Delta Q,\Delta R)$ represented by $\delta\theta$ satisfies
$\|\delta\theta\|_2=\|(\Delta Q,\Delta R)\|_F$.
For each $Q$-basis element, $P_j^Q$ is the response to the running term $E_j^Q$ with zero terminal value, while $P_j^H$ is the response to the terminal condition $E_j^Q$ with zero running term.
For each $R$-basis element, $P_j^R$ is the response to the running term $\hat K^\top E_j^R\hat K$ with zero terminal value.
Namely, these basis responses satisfy the terminal-value Lyapunov equations
\begin{equation}\label{eq:PjQ}
-\dot P_j^Q=\hat A_c^\top P_j^Q+P_j^Q\hat A_c+E_j^Q,
\qquad P_j^Q(T)=0,
\end{equation}
\begin{equation}\label{eq:PjR}
-\dot P_j^R=\hat A_c^\top P_j^R+P_j^R\hat A_c+\hat K^\top E_j^R\hat K,
\qquad P_j^R(T)=0,
\end{equation}
and
\begin{equation}\label{eq:PjH}
-\dot P_j^H=\hat A_c^\top P_j^H+P_j^H\hat A_c,
\qquad P_j^H(T)=E_j^Q.
\end{equation}
By the linearity of \eqref{eq:P-hat}, the corresponding solution is
\begin{equation}\label{eq:P-decomp-hat}
P(t)=\sum_{j=1}^{d_Q} q_j\big(P_j^Q(t)+\alpha P_j^H(t)\big)
+\sum_{j=1}^{d_R} r_j P_j^R(t),\qquad t\in[0,T].
\end{equation}
Substituting \eqref{eq:P-decomp-hat} into \eqref{eq:stationarity-hat} gives, for each grid point $t_k$,
\begin{equation}\label{eq:residual-linear}
\hat B^\top P(t_k)-R\hat K(t_k)
=\sum_{j=1}^{d_Q} q_j\,\hat B^\top\!\big(P_j^Q(t_k)+\alpha P_j^H(t_k)\big)
+\sum_{j=1}^{d_R} r_j\,\big(\hat B^\top P_j^R(t_k)-E_j^R\hat K(t_k)\big).
\end{equation}
For each $k$, the residual $\hat B^\top P(t_k)-R\hat K(t_k)$ is an $m\times n$ matrix and is linear in the coefficient vector $\theta=(q^\top,r^\top)^\top\in\mathbb R^{d_\theta}$. We therefore define $\hat{\mathbf M}_k\in\mathbb R^{mn\times d_\theta}$ columnwise as follows: its first $d_Q$ columns are
\[
\vecop\big(\hat B^\top(P_j^Q(t_k)+\alpha P_j^H(t_k))\big),\qquad j=1,\dots,d_Q,
\]
and its last $d_R$ columns are
\[
\vecop\big(\hat B^\top P_j^R(t_k)-E_j^R\hat K(t_k)\big),\qquad j=1,\dots,d_R.
\]
Thus, \eqref{eq:residual-linear} is equivalent to
\[
\vecop\big(\hat B^\top P(t_k)-R\hat K(t_k)\big)=\hat{\mathbf M}_k\theta.
\]
We collect these weighted block rows over the grid using the weights $w_0=w_L=\Delta t/2$ and $w_k=\Delta t$ for $1\le k\le L-1$, obtaining the matrix
\begin{equation}\label{eq:MLhat}
\hat M_L:=\begin{bmatrix}
\sqrt{w_0}\,\hat{\mathbf M}_0\\
\vdots\\
\sqrt{w_L}\,\hat{\mathbf M}_L
\end{bmatrix}\in\R^{(L+1)mn\times d_\theta}.
\end{equation}
Thus, $\hat M_L\theta$ is the weighted stacked stationarity residual, and
\[
\|\hat M_L\theta\|_2^2=\sum_{k=0}^{L} w_k\,\|\hat{\mathbf M}_k\theta\|_2^2.
\]
The corresponding noiseless matrix $M_L$ is defined in Section~\ref{sec:noise} by the same weighted stacking rule with $(A_c^\star,B^\star,K^\star)$ in place of $(\hat A_c,\hat B,\hat K)$.
In the noiseless sampled setting, the true normalized coefficient vector satisfies $M_L\theta^\star=0$.
Under noise and discretization, we estimate $(Q,R)$ by the convex program
\begin{equation}\label{eq:QRH-sdp}
\min_{\theta}\ \|\hat M_L\theta\|_2^2
\quad\text{s.t.}\quad
Q(\theta)\succeq 0,\ \ R(\theta)\succeq \epsilon I,\ \ \mathrm{tr}(R(\theta))=m,
\end{equation}
where $\epsilon>0$ is a small numerical margin. For the perturbation analysis, we assume that $\epsilon$ is chosen below the true coercivity level of the input weight, for example $0<\epsilon<\lambda_{\min}(R^\star)$, so that the true parameter vector remains feasible.
The empirical cost-conditioning index is the restricted smallest singular value
\begin{equation}\label{eq:cQRH-hat}
\hat c_{QRH}:=\inf_{\delta\theta\in\mathcal V_\theta:\ \|\delta\theta\|_2=1}\ \|\hat M_L\delta\theta\|_2,
\end{equation}
where $\mathcal V_\theta:=\{\delta\theta=(\delta q^\top,\delta r^\top)^\top:\ \mathrm{tr}(\sum_{j=1}^{d_R}\delta r_j E_j^R)=0\}$ is the coefficient-space representation of the scaling-removed subspace.

\begin{proposition}[Well-posedness of the sampled cost stage]
\label{prop:QRH-unique}
Suppose $0<\epsilon\le1$.
If $\hat c_{QRH}>0$, equivalently, if the weighted residual matrix
$\hat M_L$ is injective on $\mathcal V_\theta$, then
\eqref{eq:QRH-sdp} admits a unique minimizer.
In the noiseless sampled setting, if $\theta^\star$ is feasible for
\eqref{eq:QRH-sdp} and satisfies
$\hat M_L\theta^\star=0$, then this unique minimizer is
$\theta^\star$.
\end{proposition}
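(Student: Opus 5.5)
The plan is to treat \eqref{eq:QRH-sdp} as minimizing a convex quadratic $f(\theta):=\|\hat M_L\theta\|_2^2$ over the feasible set
\[
\mathcal C:=\{\theta:\ Q(\theta)\succeq0,\ R(\theta)\succeq\epsilon I,\ \mathrm{tr}(R(\theta))=m\}.
\]
We first establish existence, then strict convexity along feasible directions, and finally handle the noiseless case. The set $\mathcal C$ is convex and closed, being the intersection of preimages of closed convex cones and an affine hyperplane under linear maps. It is nonempty because $\epsilon\le1$: the point $Q=0$, $R=I_m$ satisfies $R\succeq\epsilon I$ and $\mathrm{tr}(R)=m$. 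This is where the hypothesis $0<\epsilon\le1$ is used.

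For existence, note that $\mathcal C$ is not bounded in $Q$, so compactness is unavailable. Instead, fix a feasible $\theta_0$ and write every feasible $\theta$ as $\theta=\theta_0+\delta\theta$. Since both points satisfy the trace constraint, $\delta\theta\in\mathcal V_\theta$. By the definition \eqref{eq:cQRH-hat},
\[
\|\hat M_L\theta\|_2\ \ge\ \hat c_{QRH}\|\delta\theta\|_2-\|\hat M_L\theta_0\|_2 .
\]
Hence $f$ is coercive on $\mathcal C$, and the sublevel set $\{\theta\in\mathcal C: f(\theta)\le f(\theta_0)\}$ is closed and bounded. By continuity, $f$ attains its minimum on this compact set, which gives a minimizer.

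For uniqueness, suppose $\theta_1\neq\theta_2$ are both minimizers with common value $f^\ast$. Their difference $d:=\theta_1-\theta_2$ lies in $\mathcal V_\theta\setminus\{0\}$, so $\hat M_L d\neq0$ by injectivity. The map $s\mapsto f(\theta_2+sd)=\|\hat M_L\theta_2+s\hat M_L d\|_2^2$ is a quadratic with leading coefficient $\|\hat M_Ld\|_2^2>0$, so it is strictly convex. The midpoint $\tfrac12(\theta_1+\theta_2)$ lies in $\mathcal C$ by convexity, and strict convexity then gives it a value strictly below $f^\ast$. This contradicts minimality, so the minimizer is unique.

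The noiseless claim is immediate. If $\theta^\star\in\mathcal C$ and $\hat M_L\theta^\star=0$, then $f(\theta^\star)=0\le f(\theta)$ for all $\theta$, so $\theta^\star$ is a minimizer, and by uniqueness it is the minimizer.

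The only delicate point is the existence step. The feasible set is unbounded, and $\hat M_L$ need not be injective on the full coefficient space. The trace constraint confines all feasible differences to $\mathcal V_\theta$, and it is precisely this restriction that lets the restricted index $\hat c_{QRH}>0$ supply both coercivity and strict convexity.
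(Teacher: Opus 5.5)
Your proposal is correct and follows essentially the same route as the paper: nonemptiness via the point $Q=0$, $R=I_m$ (where $\epsilon\le1$ is used), coercivity on the trace-normalized affine set from $\hat c_{QRH}>0$ giving a compact sublevel set, strict convexity along feasible differences in $\mathcal V_\theta$ for uniqueness, and the zero-objective argument for the noiseless case. The only differences are cosmetic: you use the reverse triangle inequality where the paper uses the bound $\|a+b\|_2^2\ge\tfrac12\|b\|_2^2-\|a\|_2^2$, and a one-dimensional midpoint argument where the paper writes out the explicit convex-combination identity for $f$.
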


\begin{proof}
Let $\mathcal F$ denote the feasible set of
\eqref{eq:QRH-sdp}, and define
$\Theta_m:=\{\theta:\operatorname{tr}(R(\theta))=m\}$.
Since the bases in \eqref{eq:QRH-basis} span
$\mathbb S^n$ and $\mathbb S^m$, there exists a coefficient vector
$\theta_0$ such that $Q(\theta_0)=0$ and $R(\theta_0)=I_m$.
Because $0<\epsilon\le1$, we have
$Q(\theta_0)\succeq0$,
$R(\theta_0)\succeq\epsilon I_m$, and
$\operatorname{tr}(R(\theta_0))=m$.
Hence, $\theta_0\in\mathcal F$, so $\mathcal F$ is nonempty.

The maps $\theta\mapsto Q(\theta)$ and
$\theta\mapsto R(\theta)$ are linear and continuous.
Since the positive semidefinite cones are closed and convex, and
the trace constraint is affine, $\mathcal F$ is closed and convex.
Every $\theta\in\Theta_m$ can be written as
$\theta=\theta_0+\delta\theta$ with
$\delta\theta\in\mathcal V_\theta$. By the definition of
$\hat c_{QRH}$,
\[
\|\hat M_L\delta\theta\|_2
\ge
\hat c_{QRH}\|\delta\theta\|_2,
\qquad
\delta\theta\in\mathcal V_\theta.
\]
For $f(\theta):=\|\hat M_L\theta\|_2^2$, it follows that
\[
\begin{aligned}
f(\theta_0+\delta\theta)=
\|\hat M_L\theta_0+\hat M_L\delta\theta\|_2^2 \ge
\frac{1}{2}\|\hat M_L\delta\theta\|_2^2
-
\|\hat M_L\theta_0\|_2^2 &\ge
\frac{\hat c_{QRH}^2}{2}\|\delta\theta\|_2^2
-
\|\hat M_L\theta_0\|_2^2.
\end{aligned}
\]
Thus, $\hat c_{QRH}>0$ implies that $f$ is coercive on
$\Theta_m$, and hence on $\mathcal F$. The sublevel set
$\{\theta\in\mathcal F:f(\theta)\le f(\theta_0)\}$ is nonempty,
closed, and bounded, and is therefore compact. Since $f$ is
continuous, it attains its minimum over $\mathcal F$.

For any $\theta_1,\theta_2\in\mathcal F$ and $s\in(0,1)$, we have
$\theta_1-\theta_2\in\mathcal V_\theta$ and
\[
\begin{aligned}
f\bigl((1-s)\theta_1+s\theta_2\bigr)
&=
(1-s)f(\theta_1)+sf(\theta_2)-s(1-s)\|\hat M_L(\theta_1-\theta_2)\|_2^2 \\
&\le
(1-s)f(\theta_1)+sf(\theta_2)-s(1-s)\hat c_{QRH}^2
\|\theta_1-\theta_2\|_2^2.
\end{aligned}
\]
Therefore, $f$ is strictly convex on $\mathcal F$, and the
minimizer is unique.

Finally, in the noiseless sampled setting, suppose that
$\theta^\star$ is feasible and satisfies
$\hat M_L\theta^\star=0$. Then
$f(\theta^\star)=0$, which is the minimum possible objective value.
By the uniqueness, $\theta^\star$ is the unique minimizer of
\eqref{eq:QRH-sdp}.
\end{proof}

\subsection{Summary algorithm}\label{sec:implementation}
The resulting sampled-data reconstruction method is summarized in Algorithm~\ref{alg:fhct-joint}. It follows the identifiability decomposition established in Section~\ref{sec:identifiability}. We refer to it as the conditioning-aware sampled-data reconstruction method (CR-IOC). Its main user-selected tuning parameters are the denoising bandwidth $\BW$ and the increment window $\Delta$. Across all stages, small values of $\hat c_X$, $\hat c_{AB}$, or $\hat c_{QRH}$ should be interpreted as intrinsic ill-conditioning rather than as a mere implementation issue.

\begin{algorithm}[H]
\caption{CR-IOC from sampled closed-loop trajectories}\label{alg:fhct-joint}
\begin{algorithmic}[1]
\Require Sampled measurements $\{(\tilde x_{i,k},\tilde u_{i,k})\}_{i=1,k=0}^{N,L}$, horizon $T$, sampling step $\Delta t$, denoising bandwidth $\BW$, even increment window $\Delta=\ell\Delta t$, terminal proportionality factor $\alpha$, and SDP margin $\epsilon$
\Ensure Estimates $(\hat A,\hat B,\hat Q,\hat R,\hat H)$ and diagnostics $(\hat c_X,\hat c_{AB},\hat c_{QRH})$
\State Apply the temporal denoiser in Definition~\ref{def:smoother} to obtain $(\hat x_{i,k},\hat u_{i,k})$ and the block matrices $(\hat X_k,\hat U_k)$; 
\For{$k=0,1,\dots,L$}
    \State Form $\hat G_k=\hat X_k\hat X_k^\top$ and compute $\hat K_k$ from \eqref{eq:Khat-ls}
\EndFor
\State Compute $\hat c_X$ from \eqref{eq:cX-hat} 
\For{$k=0,1,\dots,L-\ell$}
    \State Compute $\hat A_c(t_k)$ from the Richardson-corrected local increment formula \eqref{eq:Ac-local-est2}
\EndFor
\State Form $\bar{\hat K}$, $\bar{\hat A}_c$, $\hat{\mathcal G}_K$, and $\hat c_{AB}$ from \eqref{eq:GK-hat}, then recover $(\hat A,\hat B)$ from \eqref{eq:AB-hat}
\State Assemble $\hat M_L$ from the weighted block rows defined around \eqref{eq:MLhat}, compute $\hat c_{QRH}$ from \eqref{eq:cQRH-hat}, and solve \eqref{eq:QRH-sdp} for $(\hat Q,\hat R)$
\State Set $\hat H=\alpha\hat Q$
\end{algorithmic}
\end{algorithm}

\section{Perturbation analysis and consistency of the sampled-data reconstruction method}\label{sec:noise}
This section studies perturbation propagation through the sampled-data reconstruction method. The analysis follows Algorithm~\ref{alg:fhct-joint}: the denoiser first controls the grid-point denoising errors, these errors then propagate to $K^\star$ and $A_c^\star$, next to the recovery of $(A^\star,B^\star)$, and finally to the structured cost stage. This staged organization keeps the role of the conditioning indices explicit: $c_X$ controls the algebraic inversions in closed-loop reconstruction, $c_{AB}$ controls separation of the open-loop dynamics, and $c_{QRH}$ controls the final cost recovery problem. The observation-noise model and the temporal denoiser were introduced in Section~\ref{sec:algorithm}; we now add the regularity assumption used throughout the perturbation analysis. Throughout this section, we distinguish the continuous-time indices $(c_X,c_{AB},c_{QRH})$, the discretized indices $(c_{X,L},c_{AB,L},c_{QRH,L})$, and the empirical diagnostics $(\hat c_X,\hat c_{AB},\hat c_{QRH})$ computed by the algorithm.

\subsection{Closed-loop perturbation bounds}\label{sec:noise-closedloop}

\begin{assumption}\label{ass:regularity}
There exist finite constants $M_x,M_u,M_{A_c},L_x,L_u$ such that, for all trajectories $i=\{1,\cdots,N\}$ and all $t,s\in[0,T]$,
\begin{equation}\label{eq:reg-bounds}
\begin{aligned}
\|x_i(t)\|_2 &\le M_x, &\qquad \|u_i(t)\|_2 &\le M_u, &\qquad \|A_c^\star(t)\| &\le M_{A_c},\\
\|x_i(t)-x_i(s)\|_2 &\le L_x|t-s|, &\qquad \|u_i(t)-u_i(s)\|_2 &\le L_u|t-s|.
\end{aligned}
\end{equation}
\end{assumption}

Under Assumptions~\ref{ass:standing} and \ref{ass:terminal-prop}, the finite-horizon Riccati solution $P^\star(\cdot)$ is continuously differentiable on $[0,T]$.
Hence, $K^\star(\cdot)=(R^\star)^{-1}B^{\star\top}P^\star(\cdot)$ is continuously differentiable and $A_c^\star(\cdot)=A^\star-B^\star K^\star(\cdot)$ is bounded and Lipschitz on $[0,T]$.
Therefore, Assumption~\ref{ass:regularity} holds whenever the initial conditions are uniformly bounded, because $\dot x_i(t)=A_c^\star(t)x_i(t)$ is then uniformly bounded and
$\dot u_i(t)=-\dot K^\star(t)x_i(t)-K^\star(t)A_c^\star(t)x_i(t)$ is uniformly bounded as well.

With $I_k$ and $\nu_k$ defined in Definition~\ref{def:smoother}, we define the uniform grid-point denoising errors
\begin{equation}\label{eq:delta-def}
\delta_x:=\max_{i,k}\|\hat x_{i,k}-x_i(t_k)\|_2,
\qquad
\delta_u:=\max_{i,k}\|\hat u_{i,k}-u_i(t_k)\|_2.
\end{equation}
For a confidence level $\delta\in(0,1)$, set
\begin{equation}\label{eq:smoothing-bound}
\rho_x:=L_x \BW+\sigma_x\sqrt{\frac{2n\log\!\big(4nN(L+1)/\delta\big)}{M+1}},
\qquad
\rho_u:=L_u \BW+\sigma_u\sqrt{\frac{2m\log\!\big(4mN(L+1)/\delta\big)}{M+1}}.
\end{equation}

\begin{lemma}\label{lem:smoothing-bound}
Under Assumptions~\ref{ass:noise} and \ref{ass:regularity}, with probability at least $1-\delta$,
\begin{equation}\label{eq:smoothing-bound-u}
\delta_x\le \rho_x,
\qquad
\delta_u\le \rho_u.
\end{equation}
\end{lemma}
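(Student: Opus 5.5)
The plan is to split each denoising error into a deterministic bias term and a stochastic noise term, and bound them separately. For fixed $i,k$, Definition~\ref{def:smoother} gives
\[
\hat x_{i,k}-x_i(t_k)=\frac{1}{\nu_k}\sum_{j\in I_k}\big(x_i(t_j)-x_i(t_k)\big)+\frac{1}{\nu_k}\sum_{j\in I_k}v^x_{i,j},
\]
and the analogous identity holds for $\hat u_{i,k}$. The union bound over trajectories and grid indices will then convert per-point tail bounds into the uniform statement. This matches the logarithmic factor $\log(4nN(L+1)/\delta)$, where $N(L+1)$ counts the pairs $(i,k)$.

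\emph{Bias term.} For $j\in I_k$ we have $|t_j-t_k|\le M\Delta t\le \BW$. The Lipschitz bound in Assumption~\ref{ass:regularity} therefore gives $\|x_i(t_j)-x_i(t_k)\|_2\le L_x\BW$. Averaging preserves this bound, so the bias is at most $L_x\BW$; likewise the input bias is at most $L_u\BW$. This step is deterministic and routine.

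\emph{Noise term.} The average $\bar v:=\nu_k^{-1}\sum_{j\in I_k}v^x_{i,j}$ is a sum of independent, zero-mean, $\sigma_x$-sub-Gaussian vectors, by Assumption~\ref{ass:noise}. Its moment generating function factorizes, so $\bar v$ is $\sigma_x/\sqrt{\nu_k}$-sub-Gaussian. To pass from a vector to a norm, I will bound it coordinatewise: $\|\bar v\|_2\le\sqrt n\max_{l}|e_l^\top\bar v|$. The scalar Chernoff bound applied to $\pm e_l$ gives $\Pr(|e_l^\top\bar v|>s)\le 2\exp(-\nu_k s^2/(2\sigma_x^2))$, and a union bound over $l=1,\dots,n$ yields
\[
\Pr\big(\|\bar v\|_2>\sigma_x\sqrt{2n\log(2n/\delta')/\nu_k}\big)\le\delta'.
\]
Next, the truncated windows still satisfy $\nu_k\ge M+1$, since at least $M$ neighbours lie on one side of $k$ (this requires $\BW\le T$, so that $M\le L$). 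Hence $\nu_k$ may be replaced by $M+1$ in the bound.

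\emph{Union bound.} Take $\delta'=\delta/(2N(L+1))$ for each pair $(i,k)$ and for each of the two channels $x$ and $u$. The total failure probability is then at most $2N(L+1)\cdot\delta'=\delta$, and the resulting logarithm is $\log(4nN(L+1)/\delta)$, exactly as in \eqref{eq:smoothing-bound}. On the complementary event, the triangle inequality combining the bias and noise bounds gives $\delta_x\le\rho_x$ and $\delta_u\le\rho_u$.

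The main point requiring care is the bookkeeping of constants. The factor $4$ arises as $2$ (two-sided scalar tail) $\times$ $2$ (the two channels). The factor $\sqrt n$ comes from the coordinatewise reduction; a sharper $\varepsilon$-net argument is possible but is not needed here. One must also check the lower bound $\nu_k\ge M+1$ at boundary nodes. Everything else in the argument is routine.
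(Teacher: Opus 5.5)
Your proposal is correct and follows the paper's proof essentially step for step: the same bias/noise split with bias at most $L_x M\Delta t\le L_x\BW$, the same coordinatewise sub-Gaussian tail $2n\exp\bigl(-(M+1)t^2/(2\sigma_x^2 n)\bigr)$ using $\nu_k\ge M+1$, and the same union bound over the pairs $(i,k)$ and the two channels, which yields $\log(4nN(L+1)/\delta)$. One small correction: your justification that at least $M$ neighbours lie on one side of $k$ fails when $M\le L<2M$. The bound itself survives, because $\nu_k=\min(k,M)+\min(L-k,M)+1\ge \min(L,M)+1=M+1$ whenever $M\le L$; the paper asserts this inequality without proof.
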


\begin{proof}
We prove the state bound; the control bound is similar. Fix $(i,k)$ and write the truncated smoother as
\[
\hat x_{i,k}=\frac{1}{\nu_k}\sum_{j\in I_k}\tilde x_{i,j}.
\]
Then,
\[
\hat x_{i,k}-x_i(t_k)
=\frac{1}{\nu_k}\sum_{j\in I_k}\big(x_i(t_j)-x_i(t_k)\big)
+\frac{1}{\nu_k}\sum_{j\in I_k} v^x_{i,j}.
\]
By the Lipschitz bound in \eqref{eq:reg-bounds}, we have
\[
\Big\|\frac{1}{\nu_k}\sum_{j\in I_k}\big(x_i(t_j)-x_i(t_k)\big)\Big\|_2
\le \frac{L_x}{\nu_k}\sum_{j\in I_k}|t_j-t_k|
\le L_x M\Delta t
\le L_x \BW.
\]
For the noise average, Assumption~\ref{ass:noise}~\ref{ass:noise-subg} and independence imply that each coordinate of
\[
\bar v_{i,k}:=\frac{1}{\nu_k}\sum_{j\in I_k} v^x_{i,j}
\]
is $(\sigma_x/\sqrt{\nu_k})$-sub-Gaussian. Since $\nu_k\ge M+1$ for every $k$, the standard tail bound \cite[Sec.~2.5]{vershynin2018hdp} gives, for any $t>0$,
\[
\Pr\big(\|\bar v_{i,k}\|_2\ge t\big)
\le 2n\exp\!\Big(-\frac{(M+1)t^2}{2\sigma_x^2 n}\Big).
\]
A union bound over $i\in\{1,\dots,N\}$ and $k\in\{0,\dots,L\}$ shows that
\[
\Pr\!\left(\max_{i,k}\|\bar v_{i,k}\|_2\ge t\right)
\le 2nN(L+1)\exp\!\Big(-\frac{(M+1)t^2}{2\sigma_x^2 n}\Big).
\]
Choosing $t=\sigma_x\sqrt{\frac{2n\log(4nN(L+1)/\delta)}{M+1}}$ yields $\delta_x\le \rho_x$ with probability at least $1-\delta/2$. The same argument gives $\delta_u\le \rho_u$ with probability at least $1-\delta/2$, and a union bound proves the claim.
\end{proof}

On the event in Lemma~\ref{lem:smoothing-bound}, the block errors satisfy
\begin{equation}\label{eq:eta-def}
\max_{0\le k\le L}\|\hat X_k-X(t_k)\|\le \eta_x:=\sqrt{N}\,\rho_x,
\qquad
\max_{0\le k\le L}\|\hat U_k-U(t_k)\|\le \eta_u:=\sqrt{N}\,\rho_u.
\end{equation}
The next theorem propagates these block errors through the pointwise gain estimator.
For a fixed node $t_k$, write
\[
X_k:=X(t_k),
\qquad
U_k:=U(t_k),
\qquad
K_k^\star:=K^\star(t_k),
\qquad
C_{K,k}:=4\|K_k^\star\|\|X_k\|+\frac{8\|K_k^\star\|\|X_k\|^2}{c_X}\big(2\|X_k\|+\eta_x\big).
\]

\begin{theorem}\label{thm:K-perturb}
Suppose $\lambda_{\min}(X_kX_k^\top)\ge c_X$ and
\begin{equation}\label{eq:eta-cond}
2\|X_k\|\eta_x+\eta_x^2\le \frac{c_X}{2}.
\end{equation}
If $\|\hat X_k-X_k\|\le \eta_x$, then $\hat X_k\hat X_k^\top$ is invertible and
\begin{equation}\label{eq:K-perturb-bound}
\|\hat K_k-K_k^\star\|
\le \frac{4\|X_k\|}{c_X}\,\|\hat U_k-U_k\|
+\frac{C_{K,k}}{c_X}\,\|\hat X_k-X_k\|.
\end{equation}
\end{theorem}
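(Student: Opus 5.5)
Write $E_X:=\hat X_k-X_k$ and $E_U:=\hat U_k-U_k$, with $G:=X_kX_k^\top$ and $\hat G:=\hat X_k\hat X_k^\top$. The argument is a standard least-squares perturbation bound in three steps.

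\textbf{Step 1: invertibility of $\hat G$.} Expanding gives $\hat G-G=E_XX_k^\top+X_kE_X^\top+E_XE_X^\top$, so
\[
\|\hat G-G\|\le 2\|X_k\|\eta_x+\eta_x^2\le c_X/2
\]
by \eqref{eq:eta-cond}. Weyl's inequality then yields $\lambda_{\min}(\hat G)\ge c_X-c_X/2=c_X/2$. Hence $\hat G$ is invertible and $\|\hat G^{-1}\|\le 2/c_X$. We also record $\|G^{-1}\|\le 1/c_X$.

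\textbf{Step 2: an exact error representation.} Since $U_k=-K_k^\star X_k$, we have $\hat U_k=-K_k^\star\hat X_k+(E_U+K_k^\star E_X)$. Substituting into \eqref{eq:Khat-ls} gives
\[
\hat K_k=K_k^\star\hat X_k\hat X_k^\top\hat G^{-1}-(E_U+K_k^\star E_X)\hat X_k^\top\hat G^{-1}
=K_k^\star-(E_U+K_k^\star E_X)\hat X_k^\top\hat G^{-1}.
\]
Therefore
\[
\|\hat K_k-K_k^\star\|\le\big(\|E_U\|+\|K_k^\star\|\|E_X\|\big)\,\|\hat X_k\|\,\frac{2}{c_X}.
\]
Using $\|\hat X_k\|\le\|X_k\|+\eta_x$, this is already a bound of the required shape.

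\textbf{Step 3: matching the stated constants.} The stated constants $4\|X_k\|/c_X$ and $C_{K,k}$ suggest the authors used a cruder resolvent-type decomposition instead. In that route one writes
\[
\hat K_k-K_k^\star=-\hat U_k\hat X_k^\top\hat G^{-1}+U_kX_k^\top G^{-1}
\]
and splits it as
\[
-(E_U)\hat X_k^\top\hat G^{-1}-U_k E_X^\top\hat G^{-1}-U_kX_k^\top(\hat G^{-1}-G^{-1}).
\]
The last term uses $\hat G^{-1}-G^{-1}=-\hat G^{-1}(\hat G-G)G^{-1}$ together with $\|U_k\|\le\|K_k^\star\|\|X_k\|$ and $\|\hat G-G\|\le(2\|X_k\|+\eta_x)\|E_X\|$. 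This route produces exactly terms of the form $\|K_k^\star\|\|X_k\|^2(2\|X_k\|+\eta_x)/c_X^2$ and $\|K_k^\star\|\|X_k\|/c_X$ multiplying $\|E_X\|$.

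The only remaining task is to dominate the first-term factor $\|\hat X_k\|\le\|X_k\|+\eta_x$ by $2\|X_k\|$. This holds because \eqref{eq:eta-cond} gives $\eta_x^2\le c_X/2\le\|X_k\|^2/2$, using $c_X\le\lambda_{\min}(G)\le\|X_k\|^2$. Hence $\eta_x\le\|X_k\|$, which yields the coefficient $4\|X_k\|/c_X$ on $\|E_U\|$.

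\textbf{Main obstacle.} There is no conceptual obstacle; the work is constant bookkeeping. The generous constants ($4$ and $8$) leave ample slack, so the crude bounds $\|\hat G^{-1}\|\le 2/c_X$ and $\eta_x\le\|X_k\|$ suffice at each step.
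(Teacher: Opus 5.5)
Your proposal is correct. Your Step 3 is essentially the paper's proof. The paper splits $\hat K_k-K_k^\star=-(\hat U_k\hat X_k^\top-U_kX_k^\top)\hat G_k^{-1}-U_kX_k^\top(\hat G_k^{-1}-G_k^{-1})$ and bounds the numerator difference by $\|E_U\|\|X_k\|+\|U_k\|\|E_X\|+\|E_U\|\|E_X\|$. It handles $\hat G_k^{-1}-G_k^{-1}$ with the resolvent identity. It then absorbs the cross term $\|E_U\|\|E_X\|$ via $\eta_x<\|X_k\|$, which it obtains from $c_X\le\|X_k\|^2$ just as you do.

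What differs is your Step 2, and you undersell it. The exact identity $\hat K_k-K_k^\star=-(E_U+K_k^\star E_X)\hat X_k^\top\hat G_k^{-1}$ uses $U_k=-K_k^\star X_k$ to cancel $K_k^\star\hat X_k\hat X_k^\top\hat G_k^{-1}=K_k^\star$. As a result, $\hat G_k^{-1}$ never has to be compared with $G_k^{-1}$. Together with $\|\hat X_k\|\le\|X_k\|+\eta_x\le 2\|X_k\|$, it gives
\[
\|\hat K_k-K_k^\star\|\le\frac{4\|X_k\|}{c_X}\|\hat U_k-U_k\|+\frac{4\|K_k^\star\|\|X_k\|}{c_X}\|\hat X_k-X_k\|.
\]
Since $4\|K_k^\star\|\|X_k\|\le C_{K,k}$, this already implies the stated inequality, so Step 3 is not needed to match the constants.

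Your direct route is shorter and sharper. It shows that the second-order part $8\|K_k^\star\|\|X_k\|^2(2\|X_k\|+\eta_x)/c_X$ of $C_{K,k}$ is an artifact of the resolvent split. The same holds for the $c_X^{-2}$ term this part feeds into $\varepsilon_K$ in the end-to-end theorem. You could go further: bounding $\|\hat X_k^\top\hat G_k^{-1}\|=1/\sigma_{\min}(\hat X_k)\le\sqrt{2/c_X}$ directly, instead of using $\|\hat X_k\|\|\hat G_k^{-1}\|$, would even give a $c_X^{-1/2}$ rate. The paper's route is the generic template of perturbing the numerator and then the inverse Gram matrix, which it reuses for the Richardson estimator $\hat A_c(t_k)$ in the following theorem. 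For the gain stage, that generality costs an unnecessary $c_X^{-2}$ term.
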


\begin{proof}
Set $G_k:=X_kX_k^\top$, $\hat G_k:=\hat X_k\hat X_k^\top$,
$E_X:=\hat X_k-X_k$, and $E_U:=\hat U_k-U_k$.
Since $\lambda_{\min}(G_k)\ge c_X>0$, the matrix $G_k$ is
invertible and $\|G_k^{-1}\|\le c_X^{-1}$.
Moreover,
\[
\|\hat G_k-G_k\|\le2\|X_k\|\|E_X\|+\|E_X\|^2\le2\|X_k\|\eta_x+\eta_x^2
\le\frac{c_X}{2}.
\]
Weyl's inequality then gives $\lambda_{\min}(\hat G_k)\ge c_X/2>0$.
Hence, $\hat G_k$ is invertible and $\|\hat G_k^{-1}\|\le2/c_X$.
By \eqref{eq:Khat-ls} and Lemma~\ref{lem:K-ident}, we have
\[
\hat K_k-K_k^\star=-\hat U_k\hat X_k^\top\hat G_k^{-1}+U_kX_k^\top G_k^{-1}.
\]
Adding and subtracting $U_kX_k^\top\hat G_k^{-1}$ yields
\begin{equation}\label{eq:Kdiff-split}
\hat K_k-K_k^\star
=-\bigl(\hat U_k\hat X_k^\top-U_kX_k^\top\bigr)
\hat G_k^{-1}-U_kX_k^\top
\bigl(\hat G_k^{-1}-G_k^{-1}\bigr).
\end{equation}

Next, we bound the first term in \eqref{eq:Kdiff-split}:
\begin{align*}
\|\hat U_k\hat X_k^\top-U_kX_k^\top\|
&=\|(U_k+E_U)(X_k+E_X)^\top-U_kX_k^\top\|\\
&\le \|E_U\|\,\|X_k\|+\|U_k\|\,\|E_X\|+\|E_U\|\,\|E_X\|.
\end{align*}
Using $\|U_k\|\le \|K_k^\star\|\,\|X_k\|$ and $\|\hat G_k^{-1}\|\le 2/c_X$ gives
\begin{equation}\label{eq:first-term}
\big\|\big(\hat U_k\hat X_k^\top-U_kX_k^\top\big)\hat G_k^{-1}\big\|
\le \frac{2}{c_X}\Big(\|X_k\|\,\|E_U\|+\|K_k^\star\|\,\|X_k\|\,\|E_X\|+\|E_U\|\,\|E_X\|\Big).
\end{equation}
For the second term in \eqref{eq:Kdiff-split}, we use the identity
$\hat G_k^{-1}-G_k^{-1}=G_k^{-1}(G_k-\hat G_k)\hat G_k^{-1}$.
Therefore,
\begin{equation}\label{eq:inv-diff}
\|\hat G_k^{-1}-G_k^{-1}\|\le \|G_k^{-1}\|\,\|\hat G_k-G_k\|\,\|\hat G_k^{-1}\|
\le \frac{2}{c_X^2}\big(2\|X_k\|\,\|E_X\|+\|E_X\|^2\big).
\end{equation}
Also, $\|U_kX_k^\top\|\le \|U_k\|\,\|X_k\|\le \|K_k^\star\|\,\|X_k\|^2$.
Combining the preceding bound with \eqref{eq:inv-diff} yields
\begin{equation}\label{eq:second-term}
\big\|U_kX_k^\top(\hat G_k^{-1}-G_k^{-1})\big\|
\le \frac{2\|K_k^\star\|\,\|X_k\|^2}{c_X^2}\big(2\|X_k\|+\|E_X\|\big)\,\|E_X\|.
\end{equation}

Since $c_X\le \|X_kX_k^\top\|\le \|X_k\|^2$, condition \eqref{eq:eta-cond} implies $\eta_x<\|X_k\|$, and therefore $\|E_U\|\,\|E_X\|\le \|X_k\|\,\|E_U\|$. Substituting \eqref{eq:first-term} and \eqref{eq:second-term} into \eqref{eq:Kdiff-split} then yields \eqref{eq:K-perturb-bound}.
\end{proof}

Under Assumptions~\ref{ass:standing}, \ref{ass:terminal-prop}, and \ref{ass:regularity}, the solution $P^\star(\cdot)$ of the Riccati equation \eqref{eq:DRE}  is smooth on $[0,T]$.
Consequently, $K^\star(\cdot)$, $A_c^\star(\cdot)$, $X(\cdot)$, and $f(\cdot):=A_c^\star(\cdot)X(\cdot)$ are smooth as well.
Define
\[
M_K:=\sup_{t\in[0,T]}\|K^\star(t)\|,
\qquad
L_K:=\sup_{t\in[0,T]}\|\dot K^\star(t)\|,
\qquad
L_{A_c}:=\sup_{t\in[0,T]}\|\dot A_c^\star(t)\|,
\qquad
L_{A_c,1}:=\sup_{t\in[0,T]}\|\ddot A_c^\star(t)\|,
\]
and let$M_X:=\sup_{t\in[0,T]}\|X(t)\|$, then
\[
L_{f,1}:=\sup_{t\in[0,T]}\|\ddot f(t)\|
\le M_X\big(L_{A_c,1}+3M_{A_c}L_{A_c}+M_{A_c}^3\big)<\infty.
\]
Indeed, $\dot X=A_c^\star X$, $\ddot X=(\dot A_c^\star+A_c^{\star 2})X$, and $\ddot f=\ddot A_c^\star X+2\dot A_c^\star\dot X+A_c^\star\ddot X$. We then obtain the second-order closed-loop dynamics matrix estimate.
\begin{theorem}\label{thm:Ac-perturb2}
Let $f(t):=A_c^\star(t)X(t)$, $L_{f,1}:=\sup_{t\in[0,T]}\|\ddot f(t)\|<\infty$, and assume $c_X>0$.
Fix an even window length $\Delta=\ell\Delta t$ with $\ell\ge 2$ and an index $k\le L-\ell$.
If
\begin{equation}\label{eq:Ac2-cond}
\max\big\{\|\hat X_k-X(t_k)\|,\ \|\hat X_{k+\ell/2}-X(t_{k+\ell/2})\|,\ \|\hat X_{k+\ell}-X(t_{k+\ell})\|\big\}\le \eta_x,
\qquad
2\|X(t_k)\|\eta_x+\eta_x^2\le \frac{c_X}{2},
\end{equation}
then the estimator $\hat A_c(t_k)$ in \eqref{eq:Ac-local-est2} is well defined and satisfies
\[
\|\hat A_c(t_k)-A_c^\star(t_k)\|\le \rho_{A_c,k}(\Delta,\eta_x),
\]
where $M_f:=\sup_{t\in[0,T]}\|A_c^\star(t)X(t)\|$ and
\begin{equation}\label{eq:Ac-perturb2-bound}
\rho_{A_c,k}(\Delta,\eta_x):=
\frac{L_{f,1}\,\|X(t_k)\|}{12c_X}\,\Delta^2
+\frac{16\|X(t_k)\|}{c_X}\,\frac{\eta_x}{\Delta}
+\frac{4M_f}{c_X}\,\eta_x
+\frac{8M_f\|X(t_k)\|^2}{c_X^2}\,\eta_x
+\frac{16}{c_X}\,\frac{\eta_x^2}{\Delta}
+\frac{4M_f\|X(t_k)\|}{c_X^2}\,\eta_x^2.
\end{equation}
\end{theorem}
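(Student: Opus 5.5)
The plan is to compare $\hat A_c(t_k)$ with its noiseless counterpart
\[
A^{\mathrm{R}}_k:=\frac{1}{\Delta}\big(4Y_k^{(1/2)}-Y_k\big)X_k^\top G_k^{-1},
\qquad
Y_k:=X(t_{k+\ell})-X_k,\quad Y_k^{(1/2)}:=X(t_{k+\ell/2})-X_k,\quad G_k:=X_kX_k^\top ,
\]
where $X_k:=X(t_k)$. We then split $\|\hat A_c(t_k)-A_c^\star(t_k)\|\le \|A^{\mathrm R}_k-A_c^\star(t_k)\|+\|\hat A_c(t_k)-A^{\mathrm R}_k\|$. The first piece is pure discretization bias and produces the $\Delta^2$ term. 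The second piece is noise propagation and produces the five $\eta_x$-terms. Well-definedness comes first. Exactly as in the proof of Theorem~\ref{thm:K-perturb}, the second condition in \eqref{eq:Ac2-cond} gives $\|\hat G_k-G_k\|\le c_X/2$. Weyl's inequality then gives $\lambda_{\min}(\hat G_k)\ge c_X/2$, so $\|\hat G_k^{-1}\|\le 2/c_X$ and, by \eqref{eq:inv-diff}, $\|\hat G_k^{-1}-G_k^{-1}\|\le \frac{2}{c_X^2}(2\|X_k\|\eta_x+\eta_x^2)$.

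For the bias, I use $\dot X=f$ and Taylor's formula with integral remainder around $t_k$ for both increments:
\[
X(t_k+h)-X_k=hf(t_k)+\tfrac{h^2}{2}\dot f(t_k)+\int_0^{h}\tfrac{(h-s)^2}{2}\ddot f(t_k+s)\,ds .
\]
In the Richardson combination $4Y_k^{(1/2)}-Y_k$ (with $h=\Delta/2$ and $h=\Delta$), the $\dot f$ terms cancel and the $f$ terms combine to $\Delta f(t_k)$. What remains is $\int_0^\Delta \kappa(s)\ddot f(t_k+s)\,ds$ with kernel
\[
\kappa(s)=2\big(\tfrac{\Delta}{2}-s\big)_+^2-\tfrac12(\Delta-s)^2 .
\]
A direct check shows $\kappa(s)=s(\tfrac32 s-\Delta)\le0$ on $[0,\Delta/2]$ and $\kappa(s)=-\tfrac12(\Delta-s)^2\le 0$ on $[\Delta/2,\Delta]$. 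Hence $\int_0^\Delta|\kappa|=|\int_0^\Delta\kappa|=\Delta^3/12$, and the remainder is at most $L_{f,1}\Delta^3/12$. Since $f(t_k)X_k^\top G_k^{-1}=A_c^\star(t_k)G_kG_k^{-1}=A_c^\star(t_k)$ and $\|X_k^\top G_k^{-1}\|\le\|X_k\|/c_X$, dividing by $\Delta$ yields exactly the first term $\frac{L_{f,1}\|X_k\|}{12c_X}\Delta^2$. This sign check on the Peano kernel is the step I expect to be the real obstacle. A naive triangle inequality on the two remainders gives $\Delta^2/4$ rather than $1/12$, so the constant-sign property is what must be verified.

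For the noise, write $Z:=4Y_k^{(1/2)}-Y_k$ and $\hat Z:=4\hat Y_k^{(1/2)}-\hat Y_k=Z+E_Z$. Here $E_Z$ is a combination of the three state errors with coefficients $4,-4,-1,+1$, so $\|E_Z\|\le 8\eta_x$. I also note $\|Z\|\le \Delta M_f+\dots$; more cleanly, I use $\|Z\|\le 4\cdot\frac{\Delta}{2}M_f+\Delta M_f$, or simply bound $\|Z\|/\Delta\le$ a constant multiple of $M_f$ via $Y=\int f$. Then I expand
\[
\hat A_c-A^{\mathrm R}=\tfrac1\Delta\Big[E_Z\hat X_k^\top\hat G_k^{-1}+Z(\hat X_k-X_k)^\top\hat G_k^{-1}+ZX_k^\top(\hat G_k^{-1}-G_k^{-1})\Big].
\]
Bounding each bracket with $\|\hat X_k\|\le\|X_k\|+\eta_x$, $\|\hat G_k^{-1}\|\le2/c_X$ and the inverse-difference bound gives the following. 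The first bracket yields $\frac{16\|X_k\|}{c_X}\frac{\eta_x}{\Delta}+\frac{16}{c_X}\frac{\eta_x^2}{\Delta}$. The second yields the $\frac{4M_f}{c_X}\eta_x$-type term. The third yields $\frac{8M_f\|X_k\|^2}{c_X^2}\eta_x+\frac{4M_f\|X_k\|}{c_X^2}\eta_x^2$, after absorbing the Richardson factor in $\|Z\|/\Delta$ into the stated constants. Summing the bias and noise pieces gives \eqref{eq:Ac-perturb2-bound}.
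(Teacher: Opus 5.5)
Your architecture is the same as the paper's proof. You use the same noiseless Richardson comparator and the same sign-definite Peano kernel: your $\kappa$ is exactly $\Delta$ times the paper's $K_\Delta$, and $\int_0^\Delta|\kappa|=\Delta^3/12$ is correct. You also use the same Weyl argument for invertibility of $\hat G_k$, and a regrouped but equivalent three-term expansion of the noise error. The bias term and the two $\eta_x/\Delta$ terms come out exactly as stated. For $\|E_Z\|\le 8\eta_x$, note that you must merge the two $E_X$ contributions into $-3E_X$; bounding four pieces separately would give $10\eta_x$.

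The step that does not go through is the bound on $\|Z\|$, which controls the three $M_f$-terms. With your explicit choice $\|Z\|\le 4\cdot\frac{\Delta}{2}M_f+\Delta M_f=3M_f\Delta$, the second and third brackets give $\frac{6M_f}{c_X}\eta_x$, $\frac{12M_f\|X(t_k)\|^2}{c_X^2}\eta_x$ and $\frac{6M_f\|X(t_k)\|}{c_X^2}\eta_x^2$. Each is a factor $3/2$ larger than the corresponding term of $\rho_{A_c,k}(\Delta,\eta_x)$. The theorem fixes these constants, so there is nothing to absorb the excess into. Your alternative $\|Z\|\le\Delta M_f+\cdots$ via the Taylor expansion gives $\Delta\|f(t_k)\|+L_{f,1}\Delta^3/12$, which is not bounded by $2M_f\Delta$ in general either.

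The missing observation is to regroup the Richardson combination before taking norms:
\[
Z=3Y_k^{(1/2)}-\bigl(Y_k-Y_k^{(1/2)}\bigr)=3\int_{t_k}^{t_{k+\ell/2}}f(\tau)\,d\tau-\int_{t_{k+\ell/2}}^{t_{k+\ell}}f(\tau)\,d\tau .
\]
This gives $\|Z\|\le 3\cdot\frac{\Delta}{2}M_f+\frac{\Delta}{2}M_f=2M_f\Delta$, which is the bound the paper uses. With $\|Z\|/\Delta\le 2M_f$, your second bracket gives exactly $\frac{4M_f}{c_X}\eta_x$. Your third bracket gives exactly $\frac{8M_f\|X(t_k)\|^2}{c_X^2}\eta_x+\frac{4M_f\|X(t_k)\|}{c_X^2}\eta_x^2$, which completes the stated bound.
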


\begin{proof}
Define the noiseless Richardson estimator
\[
A_{c,\Delta}^{(2),\rm disc}(t_k)
:=2A_{c,\Delta/2}^{\rm disc}(t_k)-A_{c,\Delta}^{\rm disc}(t_k),
\]
where
\[
A_{c,\Delta}^{\rm disc}(t_k):=\frac{1}{\Delta}\big(X(t_{k+\ell})-X(t_k)\big)X(t_k)^\top G_k^{-1},
\]
and
\[
A_{c,\Delta/2}^{\rm disc}(t_k)
:=\frac{2}{\Delta}\,(X(t_{k+\ell/2})-X(t_k))\,X(t_k)^\top\,G_k^{-1}.
\]

We proceed in two parts. First, we control the local discretization error of the noiseless Richardson estimator. Then, we bound the perturbation induced by denoising.
Let $\bar f_{\Delta}(t_k):=\frac{1}{\Delta}\int_{t_k}^{t_{k+\ell}}f(\tau)\,d\tau$ and $\bar f_{\Delta/2}(t_k):=\frac{2}{\Delta}\int_{t_k}^{t_{k+\ell/2}}f(\tau)\,d\tau$.
Because $\dot f$ is Lipschitz, it is absolutely continuous, implying that $\ddot f(t)$ exists almost everywhere and $\|\ddot f(t)\|\le L_{f,1}$ almost everywhere on $[0,T]$.
For $s\in[0,\Delta]$, we have
\[
f(t_k+s)=f(t_k)+s\dot f(t_k)+\int_0^s (s-u)\,\ddot f(t_k+u)\,du.
\]
A direct calculation therefore gives
\[
2\bar f_{\Delta/2}(t_k)-\bar f_{\Delta}(t_k)-f(t_k)
=\int_0^{\Delta} K_{\Delta}(u)\,\ddot f(t_k+u)\,du,
\]
where
\[
K_{\Delta}(u)=
\begin{cases}
-u+\dfrac{3u^2}{2\Delta}, & 0\le u\le \dfrac{\Delta}{2},\\[2mm]
-\dfrac{(\Delta-u)^2}{2\Delta}, & \dfrac{\Delta}{2}<u\le \Delta.
\end{cases}
\]
Since $K_{\Delta}(u)\le 0$ on $[0,\Delta]$ and
\[
\int_0^{\Delta}|K_{\Delta}(u)|\,du
=\int_0^{\Delta/2}\left(u-\frac{3u^2}{2\Delta}\right)du
+\int_{\Delta/2}^{\Delta}\frac{(\Delta-u)^2}{2\Delta}\,du
=\frac{\Delta^2}{12},
\]
we obtain
\[
\big\|2\bar f_{\Delta/2}(t_k)-\bar f_{\Delta}(t_k)-f(t_k)\big\|
\le \frac{L_{f,1}}{12}\Delta^2.
\]
Since $f(t_k)=A_c^\star(t_k)X(t_k)$ and $X(t_k)X(t_k)^\top G_k^{-1}=I$, we have
\[
A_{c,\Delta}^{(2),\rm disc}(t_k)-A_c^\star(t_k)
=\big(2\bar f_{\Delta/2}(t_k)-\bar f_{\Delta}(t_k)-f(t_k)\big)\,X(t_k)^\top G_k^{-1},
\]
and thus
\[
\|A_{c,\Delta}^{(2),\rm disc}(t_k)-A_c^\star(t_k)\|
\le \frac{L_{f,1}\,\|X(t_k)\|}{12c_X}\,\Delta^2.
\]

Write $\hat X_k=X(t_k)+E_X$, $\hat X_{k+\ell/2}=X(t_{k+\ell/2})+E_{X,1/2}$ and $\hat X_{k+\ell}=X(t_{k+\ell})+E_{X,1}$.
Define the noiseless increments $Y:=X(t_{k+\ell})-X(t_k)$ and $Y_{1/2}:=X(t_{k+\ell/2})-X(t_k)$, and their noisy counterparts $\hat Y_k$ and $\hat Y_k^{(1/2)}$.
Let
\[
W:=4Y_{1/2}-Y,\qquad \hat W:=4\hat Y_{1/2,k}-\hat Y_k.
\]
Then, using $Y_{1/2}=\int_{t_k}^{t_{k+\ell/2}} f(\tau)\,d\tau$ and $Y=\int_{t_k}^{t_{k+\ell}} f(\tau)\,d\tau$, we can rewrite
$W=4Y_{1/2}-Y=3\int_{t_k}^{t_{k+\ell/2}} f(\tau)\,d\tau-\int_{t_{k+\ell/2}}^{t_{k+\ell}} f(\tau)\,d\tau$, and hence $\|W\|\le 2M_f\Delta$.
Moreover, writing $\hat X_{k}=X(t_k)+E_X$, $\hat X_{k+\ell/2}=X(t_{k+\ell/2})+E_{X,1/2}$, and $\hat X_{k+\ell}=X(t_{k+\ell})+E_{X,1}$ gives
$\hat W-W=4E_{X,1/2}-E_{X,1}-3E_X$, then it implies that $\|\hat W-W\|\le 8\eta_x$ under \eqref{eq:Ac2-cond}.
Moreover, \eqref{eq:Ac2-cond} implies $\|\hat G_k-G_k\|\le c_X/2$, hence $\|\hat G_k^{-1}\|\le 2/c_X$ and
\(\|\hat G_k^{-1}-G_k^{-1}\|\le \frac{2}{c_X^2}(2\|X(t_k)\|\eta_x+\eta_x^2)\).

Using $\hat A_c(t_k)=\frac{1}{\Delta}\hat W\hat X_k^\top\hat G_k^{-1}$ and $A_{c,\Delta}^{(2),\rm disc}(t_k)=\frac{1}{\Delta}W X(t_k)^\top G_k^{-1}$, the identity
\[
\hat A_c(t_k)-A_{c,\Delta}^{(2),\rm disc}(t_k)
=\frac{1}{\Delta}\Big[(\hat W\hat X_k^\top-WX(t_k)^\top)\hat G_k^{-1}+WX(t_k)^\top(\hat G_k^{-1}-G_k^{-1})\Big]
\]
and a direct expansion give
\[
\frac{1}{\Delta}\|(\hat W\hat X_k^\top-WX(t_k)^\top)\hat G_k^{-1}\|
\le \frac{2}{c_X}\Big(\frac{\|\hat W-W\|\,\|X(t_k)\|}{\Delta}+\frac{\|W\|\,\|E_X\|}{\Delta}+\frac{\|\hat W-W\|\,\|E_X\|}{\Delta}\Big)
\]
and
\[
\frac{1}{\Delta}\|WX(t_k)^\top(\hat G_k^{-1}-G_k^{-1})\|
\le \frac{\|W\|\,\|X(t_k)\|}{\Delta}\,\|\hat G_k^{-1}-G_k^{-1}\|.
\]
Substituting $\|\hat W-W\|\le 8\eta_x$, $\|W\|\le 2M_f\Delta$, and $\|E_X\|\le\eta_x$ yields
\[
\|\hat A_c(t_k)-A_{c,\Delta}^{(2),\rm disc}(t_k)\|
\le
\frac{16\|X(t_k)\|}{c_X}\,\frac{\eta_x}{\Delta}
+\frac{4M_f}{c_X}\,\eta_x
+\frac{16}{c_X}\,\frac{\eta_x^2}{\Delta}
+\frac{8M_f\|X(t_k)\|^2}{c_X^2}\,\eta_x
+\frac{4M_f\|X(t_k)\|}{c_X^2}\,\eta_x^2.
\]
Finally, combining this perturbation bound with the approximation bound above gives \(\|\hat A_c(t_k)-A_c^\star(t_k)\|\le \rho_{A_c,k}(\Delta,\eta_x)\).
\end{proof}

For the full-interval reconstruction used below, the Richardson bias, terminal extension, and leading noise-amplification terms are $\mathcal O(\Delta^2)$, $\mathcal O(\Delta)$, and $\mathcal O(\eta_x/\Delta)$, respectively.
The dominant balance therefore suggests the practical rule $\Delta\asymp \eta_x^{1/2}$.

We next define the sampling-grid counterpart of $c_{AB}$ using the exact closed-loop coefficients so that the empirical reconstruction can be compared with its noiseless discrete counterpart. Define the discretized quantities
\[
\bar K_L^\star:=\frac{1}{T}\sum_{k=0}^{L} w_k\,K^\star(t_k),
\qquad
\bar A_{c,L}^\star:=\frac{1}{T}\sum_{k=0}^{L} w_k\,A_c^\star(t_k),
\]
\[
\Delta K^\star_L(t_k):=K^\star(t_k)-\bar K_L^\star,
\qquad
\Delta A_{c,L}^\star(t_k):=A_c^\star(t_k)-\bar A_{c,L}^\star,
\]
\[
\mathcal G_{K^\star,L}:=\sum_{k=0}^{L} w_k\,\Delta K^\star_L(t_k)\Delta K^\star_L(t_k)^\top,
\qquad
C_{A_c^\star K^\star,L}:=\sum_{k=0}^{L} w_k\,\Delta A_{c,L}^\star(t_k)\Delta K^\star_L(t_k)^\top,
\]
and let $c_{AB,L}:=\lambda_{\min}(\mathcal G_{K^\star,L})$.

\begin{lemma}\label{lem:cAB-discrete}
Assume $K^\star(\cdot)$ is Lipschitz on $[0,T]$ with constant $L_K$.
Then,
\[
\|\mathcal G_{K^\star,L}-\mathcal G_{K^\star}\|\le 2TM_KL_K\,\Delta t,
\qquad
|c_{AB,L}-c_{AB}|\le 2TM_KL_K\,\Delta t.
\]
In particular, $c_{AB,L}\to c_{AB}$, as $\Delta t\downarrow0$.
\end{lemma}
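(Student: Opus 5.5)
The plan is to write both Gramians as centered second moments and compare them term by term. The key identity is
\[
\mathcal G_{K^\star}=\int_0^T K^\star(t)K^\star(t)^\top dt - T\bar K^\star\bar K^{\star\top},
\qquad
\mathcal G_{K^\star,L}=\sum_{k=0}^L w_k K^\star(t_k)K^\star(t_k)^\top - T\bar K_L^\star\bar K_L^{\star\top}.
\]
It follows from expanding $\Delta K\Delta K^\top$ and using $\sum_k w_k=T$.

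The difference $\mathcal G_{K^\star,L}-\mathcal G_{K^\star}$ then splits into two pieces:
\[
D_1:=\sum_k w_kK^\star(t_k)K^\star(t_k)^\top-\int_0^TK^\star K^{\star\top}dt,
\qquad
D_2:=T\bigl(\bar K^\star\bar K^{\star\top}-\bar K_L^\star\bar K_L^{\star\top}\bigr).
\]

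For $D_1$, the trapezoidal sum equals $\int_0^T \Pi(t)\,dt$, where $\Pi$ is the piecewise-linear interpolant of $F(t):=K^\star(t)K^\star(t)^\top$. On each cell $[t_k,t_{k+1}]$ we have $\|F(s)-F(t_k)\|\le 2M_KL_K|s-t_k|$, since $F(s)-F(t_k)=(K(s)-K(t_k))K(s)^\top+K(t_k)(K(s)-K(t_k))^\top$.

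Writing the trapezoid error on a cell as $\int_{t_k}^{t_{k+1}}\bigl[\tfrac12(F(t_k)+F(t_{k+1}))-F(s)\bigr]ds$ and applying the Lipschitz bound gives a per-cell error of order $M_KL_K\Delta t^2$. Summing over $L=T/\Delta t$ cells yields $\|D_1\|\le c\,TM_KL_K\Delta t$ with an explicit small constant $c$, which I expect to be at most $1$ after a careful computation.

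For $D_2$, I use
\[
\bar K\bar K^\top-\bar K_L\bar K_L^\top=(\bar K-\bar K_L)\bar K^\top+\bar K_L(\bar K-\bar K_L)^\top .
\]
The same trapezoid argument applied to $K^\star$ gives $\|\bar K-\bar K_L\|\le \tfrac{1}{T}\cdot\tfrac{T L_K\Delta t}{2}$. Together with $\|\bar K\|,\|\bar K_L\|\le M_K$, this gives $\|D_2\|\le TM_KL_K\Delta t$.

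The step I expect to be the main obstacle is making the constants add up to exactly $2TM_KL_K\Delta t$. The cell-wise trapezoid error for a Lipschitz function with constant $\Lambda$ is at most $\Lambda\Delta t^2/4$. This gives $\|D_1\|\le \tfrac{1}{2}TM_KL_K\Delta t$ and $\|D_2\|\le TM_KL_K\Delta t$, which fits within the budget $2TM_KL_K\Delta t$. If a cruder estimate is used in either term, I will absorb the slack there.

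Finally, both Gramians are symmetric, so Weyl's inequality gives $|\lambda_{\min}(\mathcal G_{K^\star,L})-\lambda_{\min}(\mathcal G_{K^\star})|\le\|\mathcal G_{K^\star,L}-\mathcal G_{K^\star}\|$. This yields the eigenvalue bound, and convergence follows by letting $\Delta t\downarrow0$.
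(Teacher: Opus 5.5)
Your proposal is correct and follows the paper's proof essentially step for step. It uses the same centered-moment identity $\mathcal G_{K^\star}=S-T\bar K^\star\bar K^{\star\top}$ (and its weighted discrete analogue), the Lipschitz constant $2M_KL_K$ for $K^\star K^{\star\top}$, the trapezoid estimate, the bound $\|\bar K_L^\star-\bar K^\star\|\le L_K\Delta t/2$ with the product splitting, and Weyl's inequality. The crude per-cell bound $\tfrac12\Lambda\Delta t^2$ already gives exactly $TM_KL_K\Delta t$ for each of $D_1$ and $D_2$, and this is what the paper uses. So the sharper $\Lambda\Delta t^2/4$ constant is valid but not needed, and the constants pose no real obstacle.
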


\begin{proof}
Let
\[
S:=\int_0^T K^\star(t)K^\star(t)^\top\,dt,
\qquad
S_L:=\sum_{k=0}^{L} w_k\,K^\star(t_k)K^\star(t_k)^\top.
\]
Since $\|K^\star(t)\|\le M_K,\forall t\in [0,T]$ and $K^\star(\cdot)$ is $L_K$-Lipschitz, the map $t\mapsto K^\star(t)K^\star(t)^\top$ is Lipschitz with constant $2M_KL_K$.
The trapezoidal-rule error estimate therefore gives
\[
\|S_L-S\|\le TM_KL_K\,\Delta t.
\]
Applying the same estimate to $t\mapsto K^\star(t)$ yields
\[
\|\bar K_L^\star-\bar K^\star\|
=\frac{1}{T}\left\|\sum_{k=0}^{L} w_k\,K^\star(t_k)-\int_0^T K^\star(t)\,dt\right\|
\le \frac{L_K}{2}\,\Delta t.
\]
Because $\|\bar K_L^\star\|\le M_K$ and $\|\bar K^\star\|\le M_K$, we have
\[
\|\bar K_L^\star\bar K_L^{\star\top}-\bar K^\star\bar K^{\star\top}\|
\le \|\bar K_L^\star-\bar K^\star\|\,\|\bar K_L^\star\|
+\|\bar K^\star\|\,\|\bar K_L^\star-\bar K^\star\|
\le M_KL_K\,\Delta t.
\]
Using
\[
\mathcal G_{K^\star}=S-T\bar K^\star\bar K^{\star\top},
\qquad
\mathcal G_{K^\star,L}=S_L-T\bar K_L^\star\bar K_L^{\star\top},
\]
we obtain
\[
\|\mathcal G_{K^\star,L}-\mathcal G_{K^\star}\|
\le \|S_L-S\|+T\|\bar K_L^\star\bar K_L^{\star\top}-\bar K^\star\bar K^{\star\top}\|
\le 2TM_KL_K\,\Delta t.
\]
The eigenvalue bound follows from Weyl's inequality.
\end{proof}

\begin{theorem}[Dynamics-stage perturbation]\label{thm:AB-perturb}
Let $\mu_G:=\|\hat{\mathcal G}_K-\mathcal G_{K^\star,L}\|$.
If $\mu_G<c_{AB,L}$, then the estimator \eqref{eq:AB-hat} is well defined and satisfies
\begin{equation}\label{eq:B-perturb}
\|\hat B-B^\star\|
\le \frac{1}{c_{AB,L}-\mu_G}\,\|\hat C_{AK}-C_{A_c^\star K^\star,L}\|
+\frac{\|C_{A_c^\star K^\star,L}\|}{c_{AB,L}(c_{AB,L}-\mu_G)}\,\mu_G,
\end{equation}
and
\begin{equation}\label{eq:A-perturb}
\|\hat A-A^\star\|
\le \|\bar{\hat A}_c-\bar A_{c,L}^\star\|
+\|\hat B-B^\star\|\,\|\bar K_L^\star\|
+\|\hat B\|\,\|\bar{\hat K}-\bar K_L^\star\|.
\end{equation}
\end{theorem}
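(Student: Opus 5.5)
The plan is to compare the empirical estimator with a noiseless discrete identity and then apply a standard perturbation-of-inverse argument. The first step establishes the discrete analogue of \eqref{eq:int-identity} on the sampling grid. Since $A_c^\star(t_k)=A^\star-B^\star K^\star(t_k)$ holds exactly at every node, taking the weighted averages gives $\bar A_{c,L}^\star=A^\star-B^\star\bar K_L^\star$. Subtracting this from the node identity yields $\Delta A_{c,L}^\star(t_k)=-B^\star\Delta K_L^\star(t_k)$. Multiplying on the right by $w_k\Delta K_L^\star(t_k)^\top$ and summing then gives
\[
C_{A_c^\star K^\star,L}=-B^\star\mathcal G_{K^\star,L}.
\]
Because $\mu_G<c_{AB,L}$ forces $c_{AB,L}>0$, the matrix $\mathcal G_{K^\star,L}$ is invertible and $B^\star=-C_{A_c^\star K^\star,L}\mathcal G_{K^\star,L}^{-1}$. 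This discrete identity, rather than the continuous Gramian, is the reference point for the rest of the argument.

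The second step shows that the estimator is well defined and splits the error. By Weyl's inequality, $\lambda_{\min}(\hat{\mathcal G}_K)\ge c_{AB,L}-\mu_G>0$, so $\hat{\mathcal G}_K$ is invertible and $\|\hat{\mathcal G}_K^{-1}\|\le (c_{AB,L}-\mu_G)^{-1}$. Adding and subtracting $C_{A_c^\star K^\star,L}\hat{\mathcal G}_K^{-1}$ gives
\[
\hat B-B^\star=-(\hat C_{AK}-C_{A_c^\star K^\star,L})\hat{\mathcal G}_K^{-1}-C_{A_c^\star K^\star,L}\big(\hat{\mathcal G}_K^{-1}-\mathcal G_{K^\star,L}^{-1}\big).
\]
The first term is bounded by $\|\hat C_{AK}-C_{A_c^\star K^\star,L}\|/(c_{AB,L}-\mu_G)$. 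For the second term I will use the resolvent identity $\hat{\mathcal G}_K^{-1}-\mathcal G_{K^\star,L}^{-1}=\mathcal G_{K^\star,L}^{-1}(\mathcal G_{K^\star,L}-\hat{\mathcal G}_K)\hat{\mathcal G}_K^{-1}$, together with $\|\mathcal G_{K^\star,L}^{-1}\|\le 1/c_{AB,L}$. This bounds the second term by $\|C_{A_c^\star K^\star,L}\|\mu_G/(c_{AB,L}(c_{AB,L}-\mu_G))$, which gives \eqref{eq:B-perturb}.

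The last step handles $\hat A$. Using $\hat A=\bar{\hat A}_c+\hat B\bar{\hat K}$ and $A^\star=\bar A_{c,L}^\star+B^\star\bar K_L^\star$, I will write
\[
\hat A-A^\star=(\bar{\hat A}_c-\bar A_{c,L}^\star)+(\hat B-B^\star)\bar K_L^\star+\hat B(\bar{\hat K}-\bar K_L^\star),
\]
and apply the triangle inequality to obtain \eqref{eq:A-perturb}.

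I do not expect a real obstacle here. The one point needing care is to compare against the \emph{discrete} noiseless quantities, for which the identity $C_{A_c^\star K^\star,L}=-B^\star\mathcal G_{K^\star,L}$ is exact. Comparing against the continuous Gramian would introduce spurious discretization terms, and those are deferred to Lemma~\ref{lem:cAB-discrete}.
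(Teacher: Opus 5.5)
Your proposal is correct and follows the paper's proof essentially step for step. It uses the exact discrete identity $C_{A_c^\star K^\star,L}=-B^\star\mathcal G_{K^\star,L}$, Weyl's inequality with the resolvent identity for the $\hat B$ bound, and the same three-term decomposition of $\hat A-A^\star$ followed by the triangle inequality.
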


\begin{proof}

Because $A_c^\star(t_k)=A^\star-B^\star K^\star(t_k)$ for every $k$, the weighted averages satisfy
$\bar A_{c,L}^\star=A^\star-B^\star\bar K_L^\star$.
Hence, $\Delta A_{c,L}^\star(t_k)=-B^\star\Delta K^\star_L(t_k)$ and therefore
$C_{A_c^\star K^\star,L}=-B^\star\mathcal G_{K^\star,L}$.
Write $\hat G:=\hat{\mathcal G}_K$, $G:=\mathcal G_{K^\star,L}$, $\hat C:=\hat C_{AK}$, and $C:=C_{A_c^\star K^\star,L}$.
Then,
\[
\hat B-B^\star=-(\hat C-C)\hat G^{-1}-C\big(\hat G^{-1}-G^{-1}\big).
\]
By Weyl's inequality,
\[
\lambda_{\min}(\hat G)\ge \lambda_{\min}(G)-\|\hat G-G\|=c_{AB,L}-\mu_G>0,
\]
then we have $\hat G$ is invertible and
\[
\|\hat G^{-1}\|\le \frac{1}{c_{AB,L}-\mu_G},
\qquad
\|G^{-1}\|\le \frac{1}{c_{AB,L}}.
\]
Using the identity
\[
\hat G^{-1}-G^{-1}=G^{-1}(G-\hat G)\hat G^{-1}
\]
yields
\[
\|\hat G^{-1}-G^{-1}\|
\le \frac{\mu_G}{c_{AB,L}(c_{AB,L}-\mu_G)}.
\]
Taking norms in the decomposition of $\hat B-B^\star$ proves \eqref{eq:B-perturb}.
Finally, since $A^\star=\bar A_{c,L}^\star+B^\star\bar K_L^\star$ and $\hat A=\bar{\hat A}_c+\hat B\bar{\hat K}$,
\[
\hat A-A^\star=(\bar{\hat A}_c-\bar A_{c,L}^\star)+(\hat B-B^\star)\bar K_L^\star+\hat B(\bar{\hat K}-\bar K_L^\star).
\]
Applying the triangle inequality gives \eqref{eq:A-perturb}.
\end{proof}
\subsection{Cost recovery and consistency}\label{sec:noise-cost}
We next study the cost stage and introduce the corresponding discretized matrix obtained from the true model on the same sampling grid.
Here, the same trapezoidal weights are used, namely, $w_0=w_L=\Delta t/2$ and $w_k=\Delta t$ for $1\le k\le L-1$ on the grid $t_k=k\Delta t$.
Define $M_L\in\mathbb R^{(L+1)mn\times d_\theta}$ directly by the same columnwise construction and weighted stacking as $\hat M_L$ in Section~\ref{sec:alg-AB}, with $(A_c^\star,B^\star,K^\star)$ in place of $(\hat A_c,\hat B,\hat K)$.
For any structured perturbation $(\Delta Q,\Delta R)\in\mathcal V_\alpha$, let $\delta\theta\in\mathcal V_\theta$ denote its coefficient vector with respect to the bases $\{E_j^Q\}$ and $\{E_j^R\}$, and define the weighted stacked residual
\begin{equation}\label{eq:ML-def}
\mathsf{M}_L(\Delta Q,\Delta R)
:=\begin{bmatrix}
\sqrt{w_0}\,\vecop\big(\mathscr{M}_\alpha(\Delta Q,\Delta R)(t_0)\big)\\
\vdots\\
\sqrt{w_L}\,\vecop\big(\mathscr{M}_\alpha(\Delta Q,\Delta R)(t_L)\big)
\end{bmatrix}
=M_L\delta\theta.
\end{equation}
We define
\begin{equation}\label{eq:cQRH-L}
c_{QRH,L}:=\inf_{\delta\theta\in\mathcal V_\theta:\ \|\delta\theta\|_2=1}\ \|M_L\delta\theta\|_2.
\end{equation}

For each $(\Delta Q,\Delta R)\in\mathcal V_\alpha$ with $\|(\Delta Q,\Delta R)\|_F=1$, let $P_\Delta(\cdot):=P_{\Delta Q,\Delta R,\alpha\Delta Q}(\cdot)$ solve \eqref{eq:P-operator} with $A_c=A_c^\star$ and $K=K^\star$.
Since $\|\Delta Q\|_F\le 1$ and $\|\Delta R\|_F\le 1$, Lemma~\ref{lem:terminal-lyap} and the bound $\|\Phi(\tau,t)\|\le e^{M_{A_c}(\tau-t)}$ imply
\[
\sup_{t\in[0,T]}\|P_\Delta(t)\|_F
\le C_P:=\alpha e^{2M_{A_c}T}+\Gamma_{A_c}(1+M_K^2),
\qquad
\Gamma_{A_c}:=\int_0^T e^{2M_{A_c}s}\,ds.
\]
Moreover, the differential equation \eqref{eq:P-operator} gives
\[
\sup_{t\in[0,T]}\|\dot P_\Delta(t)\|_F
\le C_{\dot P}:=2M_{A_c}C_P+1+M_K^2.
\]
Therefore, the structured residual family is automatically uniformly bounded and uniformly Lipschitz on the unit sphere of $\mathcal V_\alpha$.
In particular,
\[
\sup_{\|(\Delta Q,\Delta R)\|_F=1}\sup_{t\in[0,T]}\big\|\mathscr{M}_\alpha(\Delta Q,\Delta R)(t)\big\|_F
\le M_F:=\|B^\star\|C_P+M_K,
\]
\[
\sup_{\|(\Delta Q,\Delta R)\|_F=1}\mathrm{Lip}\big(\mathscr{M}_\alpha(\Delta Q,\Delta R)\big)
\le L_F:=\|B^\star\|C_{\dot P}+L_K.
\]
The constants $M_F$ and $L_F$ therefore depend only on $(A^\star,B^\star,K^\star,\alpha,T)$ and are finite under the standing finite-horizon assumptions.

\begin{lemma}\label{lem:cQRH-discrete}
The following estimate holds
\begin{equation}\label{eq:cQRH-discrete-bound}
|c_{QRH,L}^2-c_{QRH}^2|\le T M_F L_F\,\Delta t.
\end{equation}
If $c_{QRH}>0$, then
\[
|c_{QRH,L}-c_{QRH}|\le \frac{T M_F L_F}{c_{QRH}}\,\Delta t.
\]
In particular, $c_{QRH,L}\to c_{QRH}$, as $\Delta t\downarrow0$.
\end{lemma}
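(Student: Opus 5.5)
The plan is to compare, for each fixed unit direction, the squared continuous residual norm with its trapezoidal discretization, and then pass this comparison through the two infima. Fix $(\Delta Q,\Delta R)\in\mathcal V_\alpha$ with $\|(\Delta Q,\Delta R)\|_F=1$, let $\delta\theta$ be its coefficient vector, and set $F(t):=\mathscr{M}_\alpha(\Delta Q,\Delta R)(t)$ and $g(t):=\|F(t)\|_F^2$. Then
\[
\|\mathscr{M}_\alpha(\Delta Q,\Delta R)\|_{L^2([0,T])}^2=\int_0^T g(t)\,dt,
\qquad
\|M_L\delta\theta\|_2^2=\sum_{k=0}^{L}w_k\,g(t_k),
\]
where the second identity uses \eqref{eq:ML-def} together with $\|\vecop(F)\|_2=\|F\|_F$. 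Since the bases are Frobenius-orthonormal, the unit spheres of $\mathcal V_\alpha$ and $\mathcal V_\theta$ correspond bijectively, so both infima run over the same set of directions.

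The first step is to control the Lipschitz constant of $g$. From the uniform bounds established just before the lemma, $\|F(t)\|_F\le M_F$ and $\mathrm{Lip}(F)\le L_F$. Hence
\[
|g(t)-g(s)|=\big|\|F(t)\|_F-\|F(s)\|_F\big|\,\big(\|F(t)\|_F+\|F(s)\|_F\big)\le 2M_FL_F|t-s|,
\]
so $g$ is $2M_FL_F$-Lipschitz, uniformly over the unit sphere.

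The second step is the quadrature estimate. On each subinterval $[t_k,t_{k+1}]$, the trapezoidal error for an $L_g$-Lipschitz function is at most $L_g\Delta t^2/4$. To see this, write
\[
\int_{t_k}^{t_{k+1}}\Big(g(t)-\tfrac12 g(t_k)-\tfrac12 g(t_{k+1})\Big)dt
\]
and bound each half of the integrand by the distance to the nearer endpoint. Summing over the $L$ subintervals gives a total error of $L_gT\Delta t/4=\tfrac12 TM_FL_F\,\Delta t$, which is within the required $TM_FL_F\,\Delta t$. The same estimate was already used in Lemma~\ref{lem:cAB-discrete}. Therefore, for every unit direction,
\[
\big|\|M_L\delta\theta\|_2^2-\|\mathscr{M}_\alpha(\Delta Q,\Delta R)\|_{L^2([0,T])}^2\big|\le TM_FL_F\,\Delta t.
\]

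The third step passes this to the infima. If two functions on a common set differ uniformly by at most $\varepsilon$, their infima differ by at most $\varepsilon$. Squaring commutes with the infimum for nonnegative quantities, because $\inf h^2=(\inf h)^2$ when $h\ge0$. This yields \eqref{eq:cQRH-discrete-bound}. For the second claim, write
\[
|c_{QRH,L}-c_{QRH}|=\frac{|c_{QRH,L}^2-c_{QRH}^2|}{c_{QRH,L}+c_{QRH}}\le\frac{|c_{QRH,L}^2-c_{QRH}^2|}{c_{QRH}},
\]
which holds because $c_{QRH,L}\ge0$. Convergence as $\Delta t\downarrow0$ then follows immediately.

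The only delicate point is ensuring the constants $M_F$ and $L_F$ are uniform over the unit sphere. This is exactly what the preceding discussion established, so there is no genuine obstacle. I would also note that compactness is not needed, since the bound is uniform over all unit directions.
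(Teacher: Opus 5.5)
Your proof is correct and follows the paper's argument: a uniform $2M_FL_F$-Lipschitz bound on $t\mapsto\|F(t)\|_F^2$, a per-cell trapezoidal estimate summed over the $L$ cells, transfer of the bound to the squared infima, and division by $c_{QRH,L}+c_{QRH}\ge c_{QRH}$. There are two minor differences: your midpoint split gives the sharper per-cell constant $L_g(\Delta t)^2/4$, whereas the paper uses $L_g(\Delta t)^2/2$ both here and in Lemma~\ref{lem:cAB-discrete}, so your remark that Lemma~\ref{lem:cAB-discrete} already uses the same estimate is slightly off; and you pass the uniform bound through the infima directly rather than through compactness and attained minimizers, which, as you note, makes compactness unnecessary.
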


\begin{proof}
Define, for each unit-norm $(\Delta Q,\Delta R)\in\mathcal{V}_\alpha$,
\[
F_\Delta(t):=\mathscr{M}_\alpha(\Delta Q,\Delta R)(t)\in\R^{m\times n},
\qquad
f_\Delta(t):=\|F_\Delta(t)\|_F^2.
\]
Since $\sup_{t\in[0,T]}\|F_\Delta(t)\|_F\le M_F$, we have, for any $s,t\in[0,T]$,
\begin{align*}
|f_\Delta(t)-f_\Delta(s)|
&=\big|\|F_\Delta(t)\|_F^2-\|F_\Delta(s)\|_F^2\big|\\
&\le (\|F_\Delta(t)\|_F+\|F_\Delta(s)\|_F)\,\|F_\Delta(t)-F_\Delta(s)\|_F\\
&\le 2M_F L_F |t-s|.
\end{align*}
Hence, $f_\Delta(\cdot)$ is uniformly Lipschitz with constant $L_f:=2M_F L_F$.
For each interval $[t_\ell,t_{\ell+1}]$, the trapezoidal quadrature error satisfies
\begin{align*}
&\left|\int_{t_\ell}^{t_{\ell+1}} f_\Delta(t)\,dt - \frac{\Delta t}{2}\big(f_\Delta(t_\ell)+f_\Delta(t_{\ell+1})\big)\right|\\
&\qquad\le \int_{t_\ell}^{t_{\ell+1}} \left|f_\Delta(t)-\frac{f_\Delta(t_\ell)+f_\Delta(t_{\ell+1})}{2}\right|dt
\le \frac{L_f}{2}(\Delta t)^2
= M_F L_F (\Delta t)^2.
\end{align*}
Summing over $\ell=0,\dots,L-1$ yields
\begin{equation}\label{eq:norm-approx}
\big|\|F_\Delta\|_{L^2([0,T])}^2-\|\mathsf{M}_L(\Delta Q,\Delta R)\|_2^2\big|
\le T M_F L_F\,\Delta t.
\end{equation}
Since the unit spheres in $\mathcal V_\alpha$ and
$\mathcal V_\theta$ are compact and the corresponding objective
functions are continuous, the infima defining $c_{QRH}$ and
$c_{QRH,L}$ are attained.
Let $(\Delta Q^\dagger,\Delta R^\dagger)\in\mathcal{V}_\alpha$ with $\|(\Delta Q^\dagger,\Delta R^\dagger)\|_F=1$ attain the infimum defining $c_{QRH}$.
Then, \eqref{eq:norm-approx} gives
\[
c_{QRH,L}^2\le c_{QRH}^2+T M_F L_F\,\Delta t.
\]
Likewise, if $(\Delta Q_L^\dagger,\Delta R_L^\dagger)$ attains the infimum defining $c_{QRH,L}$, then
\[
c_{QRH}^2\le c_{QRH,L}^2+T M_F L_F\,\Delta t.
\]
Combining the last two displays yields \eqref{eq:cQRH-discrete-bound}.
If $c_{QRH}>0$, then
\[
|c_{QRH,L}-c_{QRH}|=
\frac{|c_{QRH,L}^2-c_{QRH}^2|}{c_{QRH,L}+c_{QRH}}
\le \frac{|c_{QRH,L}^2-c_{QRH}^2|}{c_{QRH}}
\le \frac{T M_F L_F}{c_{QRH}}\,\Delta t.
\]
This completes the proof.
\end{proof}
The true normalized parameter vector $\theta^\star$ then satisfies
\begin{equation}\label{eq:MLtheta0}
M_L\theta^\star=0.
\end{equation}

\begin{theorem}[Cost-stage perturbation bound]\label{thm:QRH-perturb}
Assume $c_{QRH,L}>0$ and define
\[
\delta_M:=\|\hat M_L-M_L\|.
\]
If $\delta_M<c_{QRH,L}$, and $\theta^\star$ is feasible for \eqref{eq:QRH-sdp}, then \eqref{eq:QRH-sdp} admits a unique minimizer $\hat\theta$, and
\begin{equation}\label{eq:theta-bound}
\|\hat\theta-\theta^\star\|_2
\le \frac{2}{c_{QRH,L}-\delta_M}\,\|\hat M_L\theta^\star\|_2
=\frac{2}{c_{QRH,L}-\delta_M}\,\|(\hat M_L-M_L)\theta^\star\|_2.
\end{equation}
\end{theorem}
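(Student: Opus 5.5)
The plan is to transfer the restricted injectivity of the noiseless matrix $M_L$ on $\mathcal V_\theta$ to the empirical matrix $\hat M_L$. We then invoke the existence and uniqueness argument of Proposition~\ref{prop:QRH-unique}, and close with a basic-inequality comparison between the minimizer $\hat\theta$ and the feasible point $\theta^\star$.

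\emph{Step 1 (restricted injectivity of $\hat M_L$).} For every $\delta\theta\in\mathcal V_\theta$, the triangle inequality and the definition \eqref{eq:cQRH-L} give
\[
\|\hat M_L\delta\theta\|_2\ge \|M_L\delta\theta\|_2-\|(\hat M_L-M_L)\delta\theta\|_2\ge (c_{QRH,L}-\delta_M)\|\delta\theta\|_2 .
\]
Hence $\hat c_{QRH}\ge c_{QRH,L}-\delta_M>0$, so $\hat M_L$ is injective on $\mathcal V_\theta$.

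\emph{Step 2 (existence and uniqueness).} I would rerun the proof of Proposition~\ref{prop:QRH-unique} with one change: the base point is the feasible vector $\theta^\star$ instead of $\theta_0$. This removes the need for the hypothesis $\epsilon\le1$, since nonemptiness of the feasible set now comes from the assumed feasibility of $\theta^\star$.

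Every feasible $\theta$ satisfies $\operatorname{tr}(R(\theta))=m$, so $\theta-\theta^\star\in\mathcal V_\theta$. Coercivity of $f(\theta)=\|\hat M_L\theta\|_2^2$ on the closed convex feasible set then follows exactly as before, with $\hat c_{QRH}$ replaced by the lower bound from Step 1. The strict-convexity identity used there applies verbatim, because differences of feasible points lie in $\mathcal V_\theta$. Together these give existence of a unique minimizer $\hat\theta$.

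\emph{Step 3 (error bound).} Since $\theta^\star$ is feasible and $\hat\theta$ is optimal, $\|\hat M_L\hat\theta\|_2\le\|\hat M_L\theta^\star\|_2$. Applying Step 1 to $\hat\theta-\theta^\star\in\mathcal V_\theta$ gives
\[
(c_{QRH,L}-\delta_M)\|\hat\theta-\theta^\star\|_2\le \|\hat M_L\hat\theta\|_2+\|\hat M_L\theta^\star\|_2\le 2\|\hat M_L\theta^\star\|_2 .
\]
Dividing by $c_{QRH,L}-\delta_M$ yields the inequality in \eqref{eq:theta-bound}. The equality $\hat M_L\theta^\star=(\hat M_L-M_L)\theta^\star$ follows from \eqref{eq:MLtheta0}.

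The only point needing care is Step 2. Proposition~\ref{prop:QRH-unique} was stated with $\theta_0$ as the anchor and assumed $\epsilon\le1$, so the coercivity estimate must be rewritten around $\theta^\star$. This is routine, and everything else is a one-line perturbation argument.
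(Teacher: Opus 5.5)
Your proposal is correct and follows the paper's proof essentially step for step. Your triangle-inequality bound in Step 1 is the restricted singular-value (Weyl-type) estimate the paper invokes, uniqueness comes from Proposition~\ref{prop:QRH-unique}, and Step 3 is the same basic-inequality comparison with the feasible point $\theta^\star$. The one difference, re-anchoring the coercivity argument at $\theta^\star$ to avoid the hypothesis $\epsilon\le 1$, is valid but unnecessary: feasibility of $\theta^\star$ already gives $m=\mathrm{tr}(R(\theta^\star))\ge m\epsilon$, hence $\epsilon\le 1$, so the proposition can be cited directly, as the paper does.
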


\begin{proof}
By definition, the restriction of $M_L$ to $\mathcal V_\theta$ has smallest singular value $c_{QRH,L}$.
Weyl's inequality and the perturbation assumption imply that the restriction of $\hat M_L$ to $\mathcal V_\theta$ has smallest singular value at least $c_{QRH,L}-\delta_M$.
Hence, $\hat M_L$ is injective on $\mathcal V_\theta$, and Proposition~\ref{prop:QRH-unique} implies that the minimizer of \eqref{eq:QRH-sdp} is unique.

Since $\hat\theta$ minimizes the objective over the feasible set and $\theta^\star$ is feasible, it holds
\[
\|\hat M_L\hat\theta\|_2\le \|\hat M_L\theta^\star\|_2.
\]
Let $\delta\theta:=\hat\theta-\theta^\star$.
Because both $\hat\theta$ and $\theta^\star$ satisfy $\mathrm{tr}(R)=m$, one has $\delta\theta\in\mathcal V_\theta$.
The triangle inequality gives
\[
\|\hat M_L\delta\theta\|_2
\le \|\hat M_L\hat\theta\|_2+\|\hat M_L\theta^\star\|_2
\le 2\|\hat M_L\theta^\star\|_2.
\]
The restricted singular value bound then yields
\[
(c_{QRH,L}-\delta_M)\,\|\delta\theta\|_2\le \|\hat M_L\delta\theta\|_2.
\]
Combining the last two displays proves the first inequality in \eqref{eq:theta-bound}.
Finally, \eqref{eq:MLtheta0} implies $\hat M_L\theta^\star=(\hat M_L-M_L)\theta^\star$.
\end{proof}
For the cost-stage operator perturbation, define
\[
\Delta_{A_c}:=\sup_{t\in[0,T]}\|\hat A_c(t)-A_c^\star(t)\|,
\qquad
\Delta_B:=\|\hat B-B^\star\|,
\qquad
\Delta_K:=\sup_{t\in[0,T]}\|\hat K(t)-K^\star(t)\|,
\]
\[
M_K=\sup_{t\in[0,T]}\|K^\star(t)\|,
\qquad
\hat M_K:=\sup_{t\in[0,T]}\|\hat K(t)\|,
\qquad
\hat M_{A_c}:=\sup_{t\in[0,T]}\|\hat A_c(t)\|,
\]
\[
\bar M_A:=\max\{M_{A_c},\hat M_{A_c}\},
\qquad
\Gamma_A:=\int_0^T e^{2\bar M_A s}\,ds
=
\begin{cases}
T, & \bar M_A=0,\\[1mm]
\dfrac{e^{2\bar M_A T}-1}{2\bar M_A}, & \bar M_A>0,
\end{cases}
\]
\[
\beta_Q:=(\Gamma_A+\alpha e^{2\bar M_A T})\,\Delta_B
+2\|B^\star\|\Gamma_A(\Gamma_A+\alpha e^{2\bar M_A T})\,\Delta_{A_c},
\]
\[
\beta_R:=\hat M_K^2\Gamma_A\,\Delta_B
+\|B^\star\|\Big(2\Gamma_A^2\hat M_K^2\,\Delta_{A_c}
+\Gamma_A(M_K+\hat M_K)\,\Delta_K\Big)
+\Delta_K.
\]

\begin{lemma}\label{lem:ML-lipschitz}
Under Assumption~\ref{ass:regularity}, assume in addition that $\hat M_{A_c}<\infty$. Then,
\begin{equation}\label{eq:ML-lip}
\|\hat M_L-M_L\|
\le \sqrt{T\big(d_Q\beta_Q^2+d_R\beta_R^2\big)}
+\mathrm{err}_{\mathrm{ODE}}
\end{equation}
Here, $\mathrm{err}_{\mathrm{ODE}}$ denotes the numerical error incurred when solving the terminal-value Lyapunov equations that define the columns of $M_L$ and $\hat M_L$.
\end{lemma}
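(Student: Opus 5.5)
We bound $\hat M_L-M_L$ column by column and then aggregate the columns. Write $\hat M_L-M_L=[\,D^Q_1,\dots,D^Q_{d_Q},D^R_1,\dots,D^R_{d_R}\,]$, where each column is the weighted stack over $k$ of the vectorized differences of the residual blocks. Since $\|\cdot\|\le\|\cdot\|_F$ and $\sum_k w_k=T$, we have
\[
\|\hat M_L-M_L\|^2\le \sum_j\|D^Q_j\|_2^2+\sum_j\|D^R_j\|_2^2\le T\Big(\sum_j \sup_t\|\delta^Q_j(t)\|_F^2+\sum_j\sup_t\|\delta^R_j(t)\|_F^2\Big),
\]
where $\delta^Q_j(t)$ and $\delta^R_j(t)$ are the pointwise differences of the residual matrices. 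It therefore suffices to show, for the exact continuous-time solutions, that
\[
\sup_t\|\delta^Q_j(t)\|\le\beta_Q,\qquad \sup_t\|\delta^R_j(t)\|\le\beta_R.
\]
Any discrepancy between these exact solutions and the numerically computed columns is placed in $\mathrm{err}_{\mathrm{ODE}}$ via the triangle inequality.

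For a $Q$-column, write
\[
\delta^Q_j=\hat B^\top\hat P_j-B^{\star\top}P_j=(\hat B-B^\star)^\top\hat P_j+B^{\star\top}(\hat P_j-P_j),
\]
with $P_j:=P_j^Q+\alpha P_j^H$. By Lemma~\ref{lem:terminal-lyap} applied with $\hat A_c$ and the transition bound $\|\hat\Phi(\tau,t)\|\le e^{\bar M_A(\tau-t)}$, and since $\|E_j^Q\|_F=1$, we get
\[
\|\hat P_j(t)\|\le\Gamma_A+\alpha e^{2\bar M_AT}.
\]
This gives the first term of $\beta_Q$. For $\hat P_j-P_j$, note that the difference $D=\hat P_j-P_j$ solves a terminal-value Lyapunov equation with coefficient $A_c^\star$, zero terminal value, and forcing $(\hat A_c-A_c^\star)^\top\hat P_j+\hat P_j(\hat A_c-A_c^\star)$. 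Lemma~\ref{lem:terminal-lyap} then yields
\[
\|D(t)\|\le\Gamma_A\cdot 2\Delta_{A_c}\sup\|\hat P_j\|\le 2\Gamma_A(\Gamma_A+\alpha e^{2\bar M_AT})\Delta_{A_c},
\]
which reproduces the second term of $\beta_Q$.

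For an $R$-column,
\[
\delta^R_j=(\hat B-B^\star)^\top\hat P_j^R+B^{\star\top}(\hat P_j^R-P_j^R)-E_j^R(\hat K-K^\star).
\]
The last term is bounded by $\Delta_K$ because $\|E_j^R\|\le1$. Lemma~\ref{lem:terminal-lyap} gives $\|\hat P^R_j\|\le\Gamma_A\hat M_K^2$, which handles the $\Delta_B$ term. The difference $\hat P_j^R-P_j^R$ solves the $A_c^\star$-Lyapunov equation with zero terminal value and forcing
\[
(\hat A_c-A_c^\star)^\top\hat P_j^R+\hat P_j^R(\hat A_c-A_c^\star)+\big(\hat K^\top E_j^R\hat K-K^{\star\top}E_j^RK^\star\big).
\]
The second bracket is bounded by $(M_K+\hat M_K)\Delta_K$ after adding and subtracting $K^{\star\top}E_j^R\hat K$. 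Integrating with the weight $\Gamma_A$ gives exactly $2\Gamma_A^2\hat M_K^2\Delta_{A_c}+\Gamma_A(M_K+\hat M_K)\Delta_K$, and multiplying by $\|B^\star\|$ recovers $\beta_R$.

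The main obstacle is the difference equation for $\hat P-P$. One must choose which transition matrix to use: the true $\Phi$ or $\hat\Phi$, or equivalently whether the perturbation term carries $\hat P$ or $P$. The choice has to be consistent with the stated constants, which use $\hat M_K$ and $\bar M_A$. Using $\bar M_A$ in both transition bounds covers either choice. I would also check that the interpolated $\hat K,\hat A_c$ are continuous, so that Lemma~\ref{lem:terminal-lyap} applies; piecewise-linear interpolation ensures this. Finally, the passage from grid points to $\sup_t$ is what allows the factor $\sum_k w_k=T$.
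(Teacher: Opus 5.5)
Your proposal is correct and follows the paper's proof essentially step for step. It uses column-wise Frobenius bounds from the split $(\hat B-B^\star)^\top\hat P+B^{\star\top}(\hat P-P)$, the difference $\hat P-P$ solving an $A_c^\star$-driven terminal-value Lyapunov equation with zero terminal value and forcing that carries $\hat P$ (this is what produces $\hat M_K^2$ in $\beta_R$), and aggregation via $\sum_k w_k=T$ and $\|\cdot\|\le\|\cdot\|_F$.

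Merging $P_j^Q+\alpha P_j^H$ into one solution gives the identical $\beta_Q$. The only quibble is your closing remark that either choice of transition matrix works: letting the forcing carry the true $P_j^R$ would yield $M_K^2$ rather than the stated $\hat M_K^2$. The choice you actually make is the right one.
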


\begin{proof}
We estimate the sampled columns of $\hat M_L-M_L$ in Frobenius norm.
Because the bases $\{E_j^Q\}$ and $\{E_j^R\}$ are Frobenius-orthonormal, every basis element has Frobenius norm one.
Let $\Phi$ and $\hat\Phi$ denote the state-transition matrices associated with $A_c^\star(\cdot)$ and $\hat A_c(\cdot)$, respectively.
By Assumption~\ref{ass:regularity} and the definition of $\bar M_A$, it holds
\[
\|\Phi(\tau,t)\|\le e^{\bar M_A(\tau-t)},
\qquad
\|\hat\Phi(\tau,t)\|\le e^{\bar M_A(\tau-t)},
\qquad 0\le t\le \tau\le T.
\]
Lemma~\ref{lem:terminal-lyap} therefore gives the uniform bounds
\[
\sup_t\|P_j^Q(t)\|_F,\ \sup_t\|\hat P_j^Q(t)\|_F\le \Gamma_A,
\qquad
\sup_t\|P_j^H(t)\|_F,\ \sup_t\|\hat P_j^H(t)\|_F\le e^{2\bar M_A T},
\]
and
\[
\sup_t\|P_j^R(t)\|_F\le M_K^2\Gamma_A,
\qquad
\sup_t\|\hat P_j^R(t)\|_F\le \hat M_K^2\Gamma_A.
\]

For a $Q$-basis column, let $\Delta P_j^Q:=\hat P_j^Q-P_j^Q$.
Subtracting the equations defining $\hat P_j^Q$ and $P_j^Q$ shows that $\Delta P_j^Q$ solves a terminal-value Lyapunov equation with zero terminal value and forcing
\[
(\hat A_c-A_c^\star)^\top\hat P_j^Q+\hat P_j^Q(\hat A_c-A_c^\star).
\]
Using Lemma~\ref{lem:terminal-lyap} again, together with the bound $\sup_t\|\hat P_j^Q(t)\|_F\le \Gamma_A$, yields
\[
\sup_t\|\Delta P_j^Q(t)\|_F\le 2\Gamma_A^2\,\Delta_{A_c}.
\]
The same argument for the terminal-response equation \eqref{eq:PjH} gives
\[
\sup_t\|\hat P_j^H(t)-P_j^H(t)\|_F
\le 2\Gamma_A e^{2\bar M_A T}\,\Delta_{A_c}.
\]
Consequently, for each $j=1,\dots,d_Q$ and every grid point $t_k$, we have
\begin{align*}
&\big\|\hat B^\top\big(\hat P_j^Q(t_k)+\alpha \hat P_j^H(t_k)\big)-B^{\star\top}\big(P_j^Q(t_k)+\alpha P_j^H(t_k)\big)\big\|_F\\
&\qquad\le \Delta_B\,\big(\Gamma_A+\alpha e^{2\bar M_A T}\big)
+2\|B^\star\|\Gamma_A\big(\Gamma_A+\alpha e^{2\bar M_A T}\big)\,\Delta_{A_c}
=\beta_Q.
\end{align*}
Hence, each weighted $Q$-column of $\hat M_L-M_L$ has Euclidean norm at most $\sqrt{T}\,\beta_Q$.

For an $R$-basis column, let $\Delta P_j^R:=\hat P_j^R-P_j^R$.
Subtracting the two terminal-value Lyapunov equations gives a forcing term equal to
\[
(\hat A_c-A_c^\star)^\top\hat P_j^R+\hat P_j^R(\hat A_c-A_c^\star)+\hat K^\top E_j^R\hat K-K^{\star\top} E_j^R K^\star.
\]
Because $\|E_j^R\|_F=1$, it holds
\[
\|\hat K^\top E_j^R\hat K-K^{\star\top} E_j^R K^\star\|_F
\le (\hat M_K+M_K)\,\Delta_K.
\]
Applying Lemma~\ref{lem:terminal-lyap} and using $\sup_t\|\hat P_j^R(t)\|_F\le \hat M_K^2\Gamma_A$ give
\[
\sup_t\|\Delta P_j^R(t)\|_F
\le 2\Gamma_A^2\hat M_K^2\,\Delta_{A_c}
+\Gamma_A(M_K+\hat M_K)\,\Delta_K.
\]
Therefore, for each $j=1,\dots,d_R$ and every grid point $t_k$, we have
\begin{align*}
&\big\|\hat B^\top\hat P_j^R(t_k)-E_j^R\hat K(t_k)-\big(B^{\star\top} P_j^R(t_k)-E_j^R K^\star(t_k)\big)\big\|_F\\
&\qquad\le \hat M_K^2\Gamma_A\,\Delta_B
+\|B^\star\|\Big(2\Gamma_A^2\hat M_K^2\,\Delta_{A_c}
+\Gamma_A(M_K+\hat M_K)\,\Delta_K\Big)
+\Delta_K
=\beta_R.
\end{align*}
Hence, each weighted $R$-column of $\hat M_L-M_L$ has Euclidean norm at most $\sqrt{T}\,\beta_R$.

The Frobenius norm of the full matrix $\hat M_L-M_L$ is therefore bounded by
\[
\|\hat M_L-M_L\|_F
\le \sqrt{T\big(d_Q\beta_Q^2+d_R\beta_R^2\big)}.
\]
Since the operator norm is dominated by the Frobenius norm, this proves \eqref{eq:ML-lip} up to the numerical error incurred by the terminal-value ODE solves.
Adding that contribution yields the stated bound.
\end{proof}
Recall the finite constants
\[
L_K:=\sup_{t\in[0,T]}\|\dot K^\star(t)\|,
\qquad
L_{A_c}:=\sup_{t\in[0,T]}\|\dot A_c^\star(t)\|,
\qquad
L_{A_c,1}:=\sup_{t\in[0,T]}\|\ddot A_c^\star(t)\|
\]
with $L_{A_c}\le \|B^\star\|L_K$.
Moreover, by \eqref{eq:DRE}, one may take
\[
L_K\le \|(R^\star)^{-1}B^{\star\top}\|\sup_{t\in[0,T]}\|A^{\star\top} P^\star(t)+P^\star(t)A^\star-P^\star(t)B^\star (R^\star)^{-1}B^{\star\top} P^\star(t)+Q^\star\|.
\]
Set
\[
\widetilde L_f:=M_X^2\big(L_{A_c,1}+3M_{A_c}L_{A_c}+M_{A_c}^3\big),
\]
and define
\begin{align*}
\varepsilon_K
&:= \frac{4M_X}{c_X}\,\eta_u
+\frac{1}{c_X}\left(4M_KM_X+\frac{8M_KM_X^2}{c_X}(2M_X+\eta_x)\right)\eta_x,\\
\varepsilon_{A_c}
&:= \frac{\widetilde L_f}{12c_X}\,\Delta^2
+\frac{16M_X}{c_X}\,\frac{\eta_x}{\Delta}
+\frac{4M_f}{c_X}\,\eta_x
+\frac{8M_fM_X^2}{c_X^2}\,\eta_x
+\frac{16}{c_X}\,\frac{\eta_x^2}{\Delta}
+\frac{4M_fM_X}{c_X^2}\,\eta_x^2
+L_{A_c}\Delta,\\
\overline\Delta_K&:=\varepsilon_K+\frac{L_K}{2}\,\Delta t,\\
\overline\mu_G&:=T\big(8M_K\overline\Delta_K+4\overline\Delta_K^2\big),\\
\overline\mu_C&:=T\big(4M_{A_c}\overline\Delta_K+4M_K\varepsilon_{A_c}+4\varepsilon_{A_c}\overline\Delta_K\big),\\
\overline\Delta_B&:=\frac{\overline\mu_C}{c_{AB,L}-\overline\mu_G}
+\frac{\|C_{A_c^\star K^\star,L}\|}{c_{AB,L}(c_{AB,L}-\overline\mu_G)}\,\overline\mu_G.
\end{align*}
Set $\overline\Gamma_A:=\int_0^T e^{2(M_{A_c}+\varepsilon_{A_c})s}\,ds$, and let $\overline\beta_Q$ and $\overline\beta_R$ denote the expressions defining $\beta_Q$ and $\beta_R$ above after replacing
$(\Delta_B,\Delta_{A_c},\Delta_K,\hat M_K,\bar M_A,\Gamma_A)$ by
$(\overline\Delta_B,\varepsilon_{A_c},\overline\Delta_K,M_K+\overline\Delta_K,M_{A_c}+\varepsilon_{A_c},\overline\Gamma_A)$.
Finally, set $\overline\mu_M:=\sqrt{T(d_Q\overline\beta_Q^2+d_R\overline\beta_R^2)}+\mathrm{err}_{\mathrm{ODE}}$.

\begin{theorem}[Error propagation]\label{thm:e2e}
Suppose Assumptions~\ref{ass:standing}, \ref{ass:terminal-prop}, \ref{ass:noise},
and~\ref{ass:regularity} hold, the true cost is normalized by $\mathrm{tr}(R^\star)=m$, and
choose $\epsilon<\lambda_{\min}(R^\star)$
in~\eqref{eq:QRH-sdp}.
Assume further that
\begin{equation}\label{eq:e2e-cond}
  2M_X\eta_x+\eta_x^2\le \frac{c_X}{2},
  \qquad
  \overline\mu_G<c_{AB,L},
  \qquad
  \overline\mu_M<c_{QRH,L}.
\end{equation}
Then, with a probability of at least $1-\delta$, we have
\begin{equation}\label{eq:e2e-bounds}
\begin{aligned}
  & \max_k \|\hat K_k - K^\star(t_k)\|
    \le \varepsilon_K, \qquad
  \Delta_K \le \overline\Delta_K, \qquad
  \Delta_{A_c} \le \varepsilon_{A_c},\\[4pt]
  & \|\hat B-B^\star\|\le\overline\Delta_B,\\
  &\|\hat A-A^\star\|
    \le \varepsilon_{A_c}+M_K\overline\Delta_B
       +(\|B^\star\|+\overline\Delta_B)\overline\Delta_K,\\
   &\|\hat\theta-\theta^\star\|_2
   \le \frac{2\|\theta^\star\|_2}
              {c_{QRH,L}-\overline\mu_M}\,\overline\mu_M .
\end{aligned}
\end{equation}
Moreover, if $c_{AB}>0$, $c_{QRH}>0$, and
$\eta_x,\eta_u\to0$, $\Delta\downarrow0$,
$\eta_x/\Delta\to0$, $\mathrm{err}_{\mathrm{ODE}}\to0$,
then \eqref{eq:e2e-cond} is eventually satisfied, and
$(\hat A,\hat B,\hat Q,\hat R,\hat H)$ is a consistent estimator
for $(A^\star,B^\star,Q^\star,R^\star,H^\star)$.
\end{theorem}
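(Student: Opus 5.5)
The plan is to work throughout on the high-probability event $\mathcal E$ of Lemma~\ref{lem:smoothing-bound}. On $\mathcal E$, \eqref{eq:eta-def} gives $\max_k\|\hat X_k-X(t_k)\|\le\eta_x$ and $\max_k\|\hat U_k-U(t_k)\|\le\eta_u$, and every statement below is deterministic on $\mathcal E$. I will chain the stage-wise results in the order of Algorithm~\ref{alg:fhct-joint}, replacing node-dependent quantities by uniform constants: $\|X_k\|\le M_X$, $\|K_k^\star\|\le M_K$, and $\lambda_{\min}(X_kX_k^\top)\ge c_X$.

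\emph{Gain stage.} The first condition in \eqref{eq:e2e-cond} implies \eqref{eq:eta-cond} at every node, so Theorem~\ref{thm:K-perturb} applies with $C_{K,k}$ bounded monotonically using $M_X$ and $M_K$. This gives $\max_k\|\hat K_k-K^\star(t_k)\|\le\varepsilon_K$. For the interpolant, on each $[t_k,t_{k+1}]$ I write $\hat K(t)-K^\star(t)$ as the interpolated nodal error plus the interpolation error of $K^\star$. The second term is at most $\frac{L_K}{2}\Delta t$ by the Lipschitz bound, which yields $\Delta_K\le\overline\Delta_K$.

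\emph{Closed-loop stage.} Theorem~\ref{thm:Ac-perturb2} gives $\|\hat A_c(t_k)-A_c^\star(t_k)\|\le\rho_{A_c,k}$ on $k\le L-\ell$. I will bound $\|X(t_k)\|$ by $M_X$ and $L_{f,1}\|X(t_k)\|$ by $\widetilde L_f$, the latter using $M_X\cdot M_X(\dots)$. For $t\in[T-\Delta,T]$, the constant extension loses at most $L_{A_c}\Delta$. Between nodes, linear interpolation of nodal errors adds no more than the same $L_{A_c}\Delta t\le L_{A_c}\Delta$ slack. Together these give $\Delta_{A_c}\le\varepsilon_{A_c}$.

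\emph{Dynamics stage.} I need $\mu_G\le\overline\mu_G$ and $\|\hat C_{AK}-C_{A_c^\star K^\star,L}\|\le\overline\mu_C$, after which Theorem~\ref{thm:AB-perturb} gives $\overline\Delta_B$ directly. Centering is a contraction in the weighted norm: since the weights sum to $T$, $\|\Delta\hat K_k-\Delta K_L^\star(t_k)\|\le2\overline\Delta_K$. Expanding $\Delta\hat K\Delta\hat K^\top-\Delta K\Delta K^\top$ and summing with $\sum w_k=T$ then gives $T(2\cdot2M_K\cdot2\overline\Delta_K+4\overline\Delta_K^2)=\overline\mu_G$. I will use $\|\Delta K_L^\star\|\le2M_K$; constants may need slight adjustment. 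The cross term is handled the same way. For $\hat A$, I will use the decomposition in \eqref{eq:A-perturb}. Combining $\|\bar{\hat A}_c-\bar A_{c,L}^\star\|\le\varepsilon_{A_c}$, $\|\bar{\hat K}-\bar K^\star_L\|\le\overline\Delta_K$, $\|\bar K_L^\star\|\le M_K$, and $\|\hat B\|\le\|B^\star\|+\overline\Delta_B$ yields the stated bound.

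\emph{Cost stage.} I will check that the hypotheses of Lemma~\ref{lem:ML-lipschitz} hold with $\hat M_{A_c}\le M_{A_c}+\varepsilon_{A_c}$ and $\hat M_K\le M_K+\overline\Delta_K$. The quantities $\beta_Q$ and $\beta_R$ are monotone in all their arguments, so $\delta_M\le\overline\mu_M<c_{QRH,L}$. Feasibility of $\theta^\star$ follows from $Q^\star\succeq0$, $\mathrm{tr}R^\star=m$, and $\epsilon<\lambda_{\min}(R^\star)$. Theorem~\ref{thm:QRH-perturb} together with $\|(\hat M_L-M_L)\theta^\star\|\le\overline\mu_M\|\theta^\star\|$ then gives the last bound. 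This monotonicity bookkeeping, especially for the exponentials inside $\overline\Gamma_A$, is where I expect the care to lie.

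\emph{Consistency.} This is the step I expect to be the main obstacle, because several limits interact. Lemmas~\ref{lem:cAB-discrete} and~\ref{lem:cQRH-discrete} show that $c_{AB,L}\to c_{AB}>0$ and $c_{QRH,L}\to c_{QRH}>0$, so both stay bounded away from zero for small $\Delta t$; implicitly $\Delta t\le\Delta\to0$. Under the stated limits, $\varepsilon_K$, $\varepsilon_{A_c}$, and $\overline\Delta_K$ tend to zero, because every term of $\varepsilon_{A_c}$ is $O(\Delta^2)$, $O(\eta_x/\Delta)$, $O(\eta_x)$, or $O(\Delta)$. Consequently $\overline\mu_G$, $\overline\mu_C$, $\overline\Delta_B$, and $\overline\mu_M$ tend to zero, while $\|C_{A_c^\star K^\star,L}\|=\|B^\star\mathcal G_{K^\star,L}\|$ stays bounded. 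Hence \eqref{eq:e2e-cond} eventually holds and all bounds vanish. The estimate $\hat H=\alpha\hat Q$ inherits consistency, and Frobenius-orthonormality gives $\|\hat\theta-\theta^\star\|=\|(\hat Q-Q^\star,\hat R-R^\star)\|_F$. Consistency in probability follows by letting $\delta\to0$ slowly, so that $\eta_x$ and $\eta_u$, which depend on $\log(1/\delta)$, still vanish.
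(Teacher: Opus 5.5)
Your proposal is correct and follows essentially the same route as the paper's proof. Both arguments work on the event of Lemma~\ref{lem:smoothing-bound} and chain Theorems~\ref{thm:K-perturb}, \ref{thm:Ac-perturb2}, \ref{thm:AB-perturb}, Lemma~\ref{lem:ML-lipschitz} and Theorem~\ref{thm:QRH-perturb} with uniform constants, the interpolation and terminal-extension slack, the centering bound $\|e_k-\bar e\|\le 2\overline\Delta_K$, and monotonicity of $\beta_Q,\beta_R$, then get consistency from Lemmas~\ref{lem:cAB-discrete} and~\ref{lem:cQRH-discrete}; your explicit feasibility check for $\theta^\star$ and the remark on letting $\delta\to0$ slowly are harmless refinements the paper leaves implicit.
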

\begin{proof}
By Lemma~\ref{lem:smoothing-bound} and \eqref{eq:eta-def}, with probability at least $1-\delta$ the denoised blocks satisfy
\[
\|\hat X_k-X(t_k)\|\le \eta_x,
\qquad
\|\hat U_k-U(t_k)\|\le \eta_u,
\]
uniformly in $k$. We work under this event throughout.

Applying Theorem~\ref{thm:K-perturb} with $\|X(t_k)\|\le M_X$ and $\|K^\star(t_k)\|\le M_K$ and using the first condition in~\eqref{eq:e2e-cond} give $\max_k\|\hat K_k-K^\star(t_k)\|\le\varepsilon_K$. Because $\hat K(\cdot)$ is the piecewise-linear interpolant through $\{\hat K_k\}_{k=0}^{L}$ while $K^\star(\cdot)$ is $L_K$-Lipschitz, we have
\[
\Delta_K
\le \max_k\|\hat K_k-K^\star(t_k)\|+\frac{L_K}{2}\,\Delta t
\le \overline\Delta_K.
\]
For the closed-loop dynamics matrix, Theorem~\ref{thm:Ac-perturb2} gives the grid-point estimate
\[
\max_{0\le k\le L-\ell}
  \|\hat A_c(t_k)-A_c^\star(t_k)\|
\le \varepsilon_{A_c}-L_{A_c}\Delta.
\]
On each interval $[t_k,t_{k+1}]\subset[0,T-\Delta]$, the piecewise-linear interpolation error of the true $A_c^\star(\cdot)$ is at most $L_{A_c}\Delta t/2$. On the terminal interval $[T-\Delta,T]$, the algorithm extends $\hat A_c(\cdot)$ by the value at $t_{L-\ell}$, and the Lipschitz continuity of $A_c^\star(\cdot)$ gives an extension error at most $L_{A_c}\Delta$. Hence, $\Delta_{A_c}\le\varepsilon_{A_c}$. This establishes the first line of~\eqref{eq:e2e-bounds}.             

Let $e_k:=\hat K_k-K^\star(t_k)$ and $\bar e:=T^{-1}\sum_{k=0}^{L}w_k e_k$.
Then, $\Delta\hat K_k=\Delta K^\star_L(t_k)+(e_k-\bar e)$ and
\[
\|e_k-\bar e\|\le 2\Delta_K\le 2\overline\Delta_K,
\qquad
\|\Delta K^\star_L(t_k)\|\le 2M_K.
\]
Therefore,
\[
\|\hat{\mathcal G}_K-\mathcal G_{K^\star,L}\|
\le \sum_{k=0}^{L}w_k
  \Big(2\|\Delta K^\star_L(t_k)\|\,\|e_k-\bar e\|
       +\|e_k-\bar e\|^2\Big)
\le \overline\mu_G.
\]
Similarly, if $f_k:=\hat A_c(t_k)-A_c^\star(t_k)$ and $\bar f:=T^{-1}\sum_{k=0}^{L}w_k f_k$, then
\[
\|f_k-\bar f\|\le 2\Delta_{A_c}\le 2\varepsilon_{A_c},
\qquad
\|\Delta A_{c,L}^\star(t_k)\|\le 2M_{A_c}.
\]
Hence,
\[
\|\hat C_{AK}-C_{A_c^\star K^\star,L}\|
\le T\big(4M_{A_c}\overline\Delta_K
         +4M_K\varepsilon_{A_c}
         +4\varepsilon_{A_c}\overline\Delta_K\big)
=\overline\mu_C.
\]
Theorem~\ref{thm:AB-perturb} and the second condition in~\eqref{eq:e2e-cond} give $\|\hat B-B^\star\|\le\overline\Delta_B$.                
                                        
Using
$\|\bar{\hat A}_c-\bar A_{c,L}^\star\|\le\varepsilon_{A_c}$,
$\|\bar K_L^\star\|\le M_K$, and
$\|\bar{\hat K}-\bar K_L^\star\|\le\overline\Delta_K$
in~\eqref{eq:A-perturb} gives the estimate for $\|\hat A-A^\star\|$.
Moreover, $\Delta_{A_c}\le\varepsilon_{A_c}$ and $\Delta_K\le\overline\Delta_K$ imply
$\bar M_A\le M_{A_c}+\varepsilon_{A_c}$, $\Gamma_A\le\overline\Gamma_A$, and $\hat M_K\le M_K+\overline\Delta_K$.
Together with $\Delta_B\le\overline\Delta_B$, the definitions above give $\beta_Q\le\overline\beta_Q$ and $\beta_R\le\overline\beta_R$.
Hence, Lemma~\ref{lem:ML-lipschitz} yields $\delta_M\le\overline\mu_M$. Applying Theorem~\ref{thm:QRH-perturb} and the third condition in~\eqref{eq:e2e-cond} gives the last line of~\eqref{eq:e2e-bounds}.

It remains to prove the consistency. Assume now that $c_{AB}>0$, $c_{QRH}>0$,
$\eta_x,\eta_u\to0$, $\Delta\downarrow0$, $\eta_x/\Delta\to0$, and $\mathrm{err}_{\mathrm{ODE}}\to0$. Because Assumption~\ref{ass:richIC} implies $c_X>0$
and $\eta_x\to0$, the first condition in~\eqref{eq:e2e-cond}
holds eventually. The definitions of $\varepsilon_K$, $\varepsilon_{A_c}$, and $\overline\Delta_K$ then give
\[
\varepsilon_K\to0,
\qquad
\varepsilon_{A_c}\to0,
\qquad
\overline\Delta_K\to0,
\qquad
\overline\mu_G\to0,
\qquad
\overline\mu_C\to0.
\]
Since $\Delta=\ell\Delta t$ with $\ell\ge2$, the condition $\Delta\downarrow0$ implies $\Delta t\downarrow0$. Lemma~\ref{lem:cAB-discrete} therefore yields $c_{AB,L}\to c_{AB}$, and $c_{AB,L}\ge c_{AB}/2>0$ eventually. Hence, the second condition in~\eqref{eq:e2e-cond} holds eventually. Since $C_{A_c^\star K^\star,L}=-B^\star\mathcal G_{K^\star,L}$ and $\|\mathcal G_{K^\star,L}\|\le4TM_K^2$, the sequence $\|C_{A_c^\star K^\star,L}\|$ is bounded. Thus, $\overline\Delta_B\to0$, and the bounds above imply $\|\hat B-B^\star\|\to0$ and $\|\hat A-A^\star\|\to0$.
Moreover, $\overline\Gamma_A$ remains bounded. Because $\overline\Delta_K\to0$, $\varepsilon_{A_c}\to0$, and $\overline\Delta_B\to0$, the definitions of $\overline\beta_Q$ and $\overline\beta_R$ give $\overline\beta_Q\to0$ and $\overline\beta_R\to0$. Therefore, $\overline\mu_M\to0$. Lemma~\ref{lem:cQRH-discrete} yields $c_{QRH,L}\to c_{QRH}$, $c_{QRH,L}\ge c_{QRH}/2>0$ eventually,
and hence the third condition in~\eqref{eq:e2e-cond} holds eventually. The last line of~\eqref{eq:e2e-bounds} then gives $\|\hat\theta-\theta^\star\|_2\to0$, which is equivalent to
$(\hat Q,\hat R,\hat H)\to(Q^\star,R^\star,H^\star)$. Thus, $(\hat A,\hat B,\hat Q,\hat R,\hat H)$ is consistent for $(A^\star,B^\star,Q^\star,R^\star,H^\star)$.
\end{proof}



\section{Numerical experiments}\label{sec:experiments}

This section evaluates CR-IOC from three complementary viewpoints. First, by varying the observation noise level and the number of demonstrations, we examine whether the staged reconstruction errors exhibit the robustness and consistency trends predicted by the perturbation analysis of Section~\ref{sec:noise}. Second, we compare CR-IOC with a matched baseline to isolate the benefit of exploiting finite-horizon gain variation in the dynamics stage, and we further test on a fixed dense $4\times 2$ example whether the resulting parameter improvements translate into more accurate closed-loop reconstruction. Third, we investigate whether the empirical diagnostics $(\hat c_X,\hat c_{AB},\hat c_{QRH})$ help identify stage-wise failure modes in deliberately ill-conditioned settings. We begin with a two-mass spring--damper system whose full $(n=4, m=2)$ model is written out explicitly and then scale the same construction to a longer chain.

\subsection{Experimental setup and benchmark models}\label{sec:experiments-protocol}

The reference system is the actuated two-mass spring--damper system shown in Fig.~\ref{fig:physical-benchmark}. Let $q=[q_1,q_2]^\top$ be the displacement vector and $u=[u_1,u_2]^\top$ the actuator force. With unit masses, ground spring/damping coefficients $(k_g,d_g)=(1.6,0.2)$, and coupling coefficients $(k_c,d_c)=(0.9,0.12)$, the second-order model is
\begin{equation}\label{eq:phys-second-order}
\ddot q + D_{\mathrm{phys}}\dot q + K_{\mathrm{phys}}q = u,
\end{equation}
where
\[
K_{\mathrm{phys}}=
\begin{bmatrix}
k_g+k_c & -k_c\\
-k_c & k_g+k_c
\end{bmatrix}
=
\begin{bmatrix}
2.5 & -0.9\\
-0.9 & 2.5
\end{bmatrix},
\qquad
D_{\mathrm{phys}}=
\begin{bmatrix}
d_g+d_c & -d_c\\
-d_c & d_g+d_c
\end{bmatrix}
=
\begin{bmatrix}
0.32 & -0.12\\
-0.12 & 0.32
\end{bmatrix}.
\]
Using the state $x=[q_1,q_2,\dot q_1,\dot q_2]^\top$, \eqref{eq:phys-second-order} becomes
\[
\dot{x}=A^{\star}x+B^{\star}u,
\]
where
\[
A^{\star}=
\begin{bmatrix}
0 & 0 & 1 & 0\\
0 & 0 & 0 & 1\\
-2.5 & 0.9 & -0.32 & 0.12\\
0.9 & -2.5 & 0.12 & -0.32
\end{bmatrix},
\qquad
B^{\star}=
\begin{bmatrix}
0 & 0\\
0 & 0\\
1 & 0\\
0 & 1
\end{bmatrix}
\]
with the diagonal cost weights
\[
Q^{\star}=\mathrm{diag}(1.182585,\,1.185247,\,0.920081,\,0.712088),
\]
\[
R^{\star}=\mathrm{diag}(1.097447,\,0.902553),
\qquad
H^{\star}=3Q^{\star}.
\]
The system in \eqref{eq:phys-second-order} is used as the $(n=4,m=2)$ reference case in the recoverability study and provides the physical interpretation for the conditioning discussion. The use of this example is motivated by the standard role of low-order structured systems in recent data-driven LQR studies, where they provide a transparent setting for separating estimation effects from synthesis effects before moving to larger-scale instances; see, e.g., the low-order locally coupled example in Dean et al.~\cite{dean2020sample} and the illustrative low-dimensional LQR examples in Mania et al.~\cite{mania2019certainty}. To examine the scaling, we also consider the analogous six-mass chain with actuators placed on masses $1$, $3$, $5$, and $6$, which yields $(n,m)=(12,4)$. Section~\ref{sec:experiments-e2e} then considers a fixed dense $4\times 2$ benchmark to assess closed-loop reconstruction in a setting that does not inherit the sparsity and symmetry of the mass--spring dynamics.

\begin{figure}[H]
\centering
\begin{tikzpicture}[x=1cm,y=1cm,>=Latex,thick]
  \draw[gray] (-0.5,0) -- (8.5,0);
  \draw[line width=1.2pt] (0,0.25) -- (0,1.75);
  \draw[line width=1.2pt] (8,0.25) -- (8,1.75);
  \draw[fill=gray!12] (2.0,0.55) rectangle (3.25,1.45);
  \draw[fill=gray!12] (4.75,0.55) rectangle (6.0,1.45);
  \node at (2.63,1.0) {$m_1$};
  \node at (5.38,1.0) {$m_2$};
  \draw[decorate,decoration={zigzag,segment length=4pt,amplitude=2.5pt}] (0,1.32) -- (2.0,1.32);
  \draw[decorate,decoration={zigzag,segment length=4pt,amplitude=2.5pt}] (3.25,1.32) -- (4.75,1.32);
  \draw[decorate,decoration={zigzag,segment length=4pt,amplitude=2.5pt}] (6.0,1.32) -- (8.0,1.32);
  \node[above] at (1.0,1.42) {$k_g$};
  \node[above] at (4.0,1.42) {$k_c$};
  \node[above] at (7.0,1.42) {$k_g$};
  \draw (0,0.72) -- (0.55,0.72);
  \draw (0.55,0.57) rectangle (0.95,0.87);
  \draw (0.95,0.72) -- (2.0,0.72);
  \draw (3.25,0.72) -- (3.65,0.72);
  \draw (3.65,0.57) rectangle (4.05,0.87);
  \draw (4.05,0.72) -- (4.75,0.72);
  \draw (6.0,0.72) -- (6.55,0.72);
  \draw (6.55,0.57) rectangle (6.95,0.87);
  \draw (6.95,0.72) -- (8.0,0.72);
  \node[below] at (1.0,0.58) {$d_g$};
  \node[below] at (4.0,0.58) {$d_c$};
  \node[below] at (7.0,0.58) {$d_g$};
  \draw[->] (2.63,0.05) -- (2.63,0.48);
  \draw[->] (5.38,0.05) -- (5.38,0.48);
  \node[right] at (2.68,0.24) {$u_1$};
  \node[right] at (5.43,0.24) {$u_2$};
  \draw[->] (2.25,1.78) -- (3.0,1.78);
  \draw[->] (5.0,1.78) -- (5.75,1.78);
  \node[above] at (2.63,1.78) {$q_1$};
  \node[above] at (5.38,1.78) {$q_2$};
\end{tikzpicture}
\caption{Reference two-mass spring--damper system used to anchor the experiments. Two actuated masses are connected to the ground and to each other by springs and dampers. The state is $x=[q_1,q_2,\dot q_1,\dot q_2]^\top$ and the input is $u=[u_1,u_2]^\top$. The medium-scale system is obtained by extending the same chain to six masses.}
\label{fig:physical-benchmark}
\end{figure}

All experiments use expert demonstrations generated from the finite-horizon LQR problem \eqref{eq:dyn}--\eqref{eq:lqr} under the assumptions of the paper, and the scale ambiguity is removed by \eqref{eq:scale-norm}. Expert trajectories are sampled on a uniform grid $t_k=k\Delta t$. Unless stated otherwise, additive Gaussian observation noise is applied to both states and inputs, process noise is absent, and the reported curves show medians together with interquartile ranges over $30$ Monte Carlo trials. The noise level is parameterized by a common signal-to-noise ratio (SNR). We define
\[
\mathrm{SNR}:=10\log_{10}\frac{P_j}{\sigma_j^2},\qquad j\in\{x,u\},
\]
where
\[
P_x:=\frac{1}{nN(L+1)}\sum_{i=1}^N\sum_{k=0}^L \|x_i(t_k)\|_2^2,
\qquad
P_u:=\frac{1}{mN(L+1)}\sum_{i=1}^N\sum_{k=0}^L \|u_i(t_k)\|_2^2,
\]
and set
\[
\sigma_x:=\sqrt{P_x}\,10^{-\mathrm{SNR}/20},
\qquad
\sigma_u:=\sqrt{P_u}\,10^{-\mathrm{SNR}/20}.
\]
Each noise vector is drawn as $v^x_{i,k}\sim\mathcal N(0,\sigma_x^2 I_n)$ and $v^u_{i,k}\sim\mathcal N(0,\sigma_u^2 I_m)$, independently across trajectories and time steps. The case $\mathrm{SNR}=\infty$ corresponds to the noiseless observations.

The proposed estimator is exactly Algorithm~\ref{alg:fhct-joint}. In the experiments below, the cost-stage semidefinite program is modeled in CVXPY~\cite{diamond2016cvxpy} and solved with Clarabel~\cite{goulart2024clarabel}. For the baseline comparison, we use a matched comparator denoted by \emph{SysID+IOC}. It shares the same denoising step, gain reconstruction, and cost-recovery SDP as CR-IOC and differs only in the dynamics stage: instead of recovering $(A,B)$ through the gain-variation Gramian \eqref{eq:AB-hat}, it fits the open-loop model $\dot x=Ax+Bu$ by least squares from the same noisy trajectories. This is intended as a controlled comparison in the usual ablation-study sense: all stages other than the use of finite-horizon gain-variation structure are held fixed. Thus, the comparison isolates whether exploiting that structure improves joint recovery. The target of this ablation is also consistent with the identifiability discussion in Geadah et al.~\cite{geadah2024inferring}: in a discrete-time stochastic setting that closed-loop data under infinite-horizon operation do not separate the open-loop dynamics unless additional structure is used, whereas finite horizons can restore identifiability. In this sense, \emph{SysID+IOC} plays the role of a comparator that does not exploit the finite-horizon temporal structure emphasized by our theory. Accordingly, \emph{SysID+IOC} should be read as a controlled ablation of the dynamics stage rather than as an exhaustive benchmark against all possible system-identification pipelines.

We report the stage-wise reconstruction errors
\[
\mathrm{Err}_{AB}:=\|\hat A-A^\star\|_F^2+\|\hat B-B^\star\|_F^2,
\qquad
\mathrm{Err}_{QR}:=\|\hat Q-Q^\star\|_F^2+\|\hat R-R^\star\|_F^2,
\]
\[
\mathrm{Err}_{K}:=\|\hat K-K^\star\|_F^2,
\qquad
\mathrm{NMSE}_{x,u}:=
\frac{\sum_{k=0}^{L}\big\|[\hat x_k;\hat u_k]-[x_k^\star;u_k^\star]\big\|_2^2}
{\sum_{k=0}^{L}\big\|[x_k^\star;u_k^\star]\big\|_2^2}.
\]
Because $H=\alpha Q$ under Assumption~\ref{ass:terminal-prop} and the normalization \eqref{eq:scale-norm} removes the positive scaling ambiguity, the cost stage is evaluated by the raw Frobenius error $\mathrm{Err}_{QR}$. The empirical conditioning diagnostics $(\hat c_X,\hat c_{AB},\hat c_{QRH})$ are computed from \eqref{eq:cX-hat}, \eqref{eq:GK-hat}, and \eqref{eq:cQRH-hat}. 

\subsection{Recoverability under sample size and observation noise}\label{sec:experiments-recoverability}

We first study perturbation trends on the physical two-mass system of Section~\ref{sec:experiments-protocol} and on its six-mass extension. Both systems use $T=2\,\mathrm{s}$, $\Delta t=0.01\,\mathrm{s}$, $N\in\{30,100,300,1000\}$, and $\mathrm{SNR}\in\{\infty,40,30,20\}\,\mathrm{dB}$. Unless varied explicitly, the default settings are $\Delta=0.04\,\mathrm{s}$ and $\mathrm{BW}=0.06 \,\mathrm{s}$. 

\begin{figure}[t]
\centering
\includegraphics[width=0.98\linewidth]{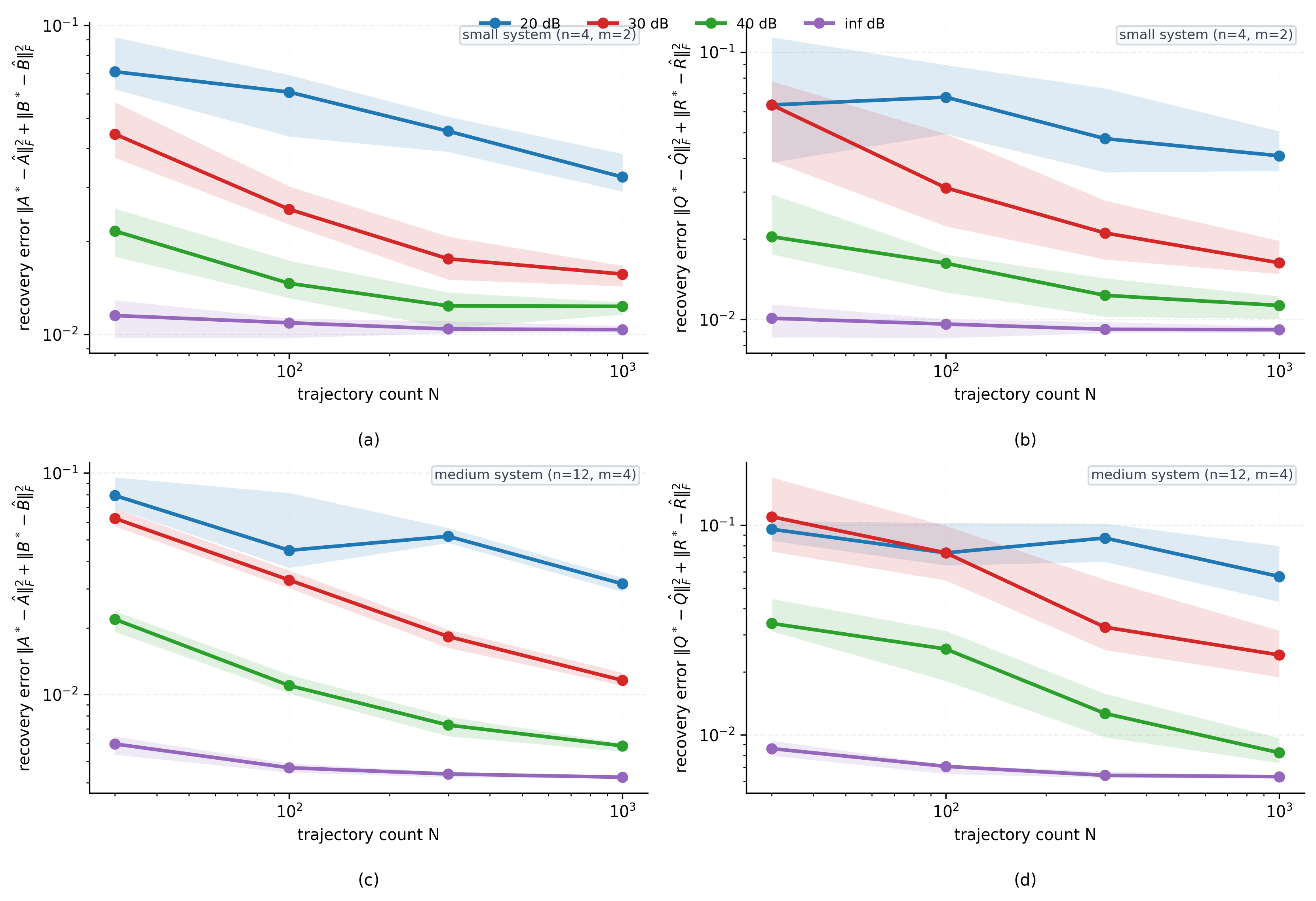}
\caption{Recoverability sweep on the mass--spring--damper family. Fig.~\ref{fig:recoverability-sweep}(a)--(b) show the physical two-mass system $(n=4,m=2)$, and Fig.~\ref{fig:recoverability-sweep}(c)--(d) show its six-mass extension $(n=12,m=4)$. Curves report medians over $30$ trials and shaded bands show interquartile ranges. Larger demonstration sets and higher observation SNR generally reduce the dynamics and cost errors.}
\label{fig:recoverability-sweep}
\end{figure}

Figure~\ref{fig:recoverability-sweep} shows the expected overall trend: for both benchmarks, larger demonstration sets and cleaner measurements generally reduce both $\mathrm{Err}_{AB}$ and $\mathrm{Err}_{QR}$. At $30$ dB, for example, the physical two-mass system decreases from $4.45\times 10^{-2}$ and $6.35\times 10^{-2}$ at $N=30$ to $1.57\times 10^{-2}$ and $1.63\times 10^{-2}$ at $N=1000$. On the six-mass extension the corresponding errors decrease from $6.24\times 10^{-2}$ and $1.10\times 10^{-1}$ to $1.16\times 10^{-2}$ and $2.41\times 10^{-2}$. The medium-scale curves are not strictly monotone at every SNR, but the overall decrease from small to large $N$ remains clear, which is reasonable for finite-sample Monte Carlo medians in the hardest noise level.

These trends are consistent with the staged perturbation analysis. Increasing $N$ enriches the sampled state data entering the gain stage, while cleaner observations reduce the perturbations propagated through the downstream dynamics and cost stages; see Lemma~\ref{lem:K-ident} and Theorems~\ref{thm:K-perturb}, \ref{thm:AB-perturb}, and \ref{thm:QRH-perturb}. The larger chain also shows that the same qualitative picture is not confined to the $4\times 2$ benchmark. The $\infty$ dB curves flatten near a small numerical floor, which is consistent with residual finite-grid and solver tolerances rather than with a qualitative failure of the reconstruction pipeline.

To examine the two principal tuning parameters of CR-IOC under a fixed noise level, we also report a sensitivity study on the same mass--spring--damper family. The setting is $T=2\,\mathrm{s}$, $\Delta t=0.01\,\mathrm{s}$, $N=300$, $\mathrm{SNR}=20\,\mathrm{dB}$, no process noise, and $30$ Monte Carlo trials. Table~\ref{tab:delta-sensitivity} varies the increment window $\Delta$ while fixing the denoising bandwidth at $\BW=0.06\,\mathrm{s}$, and Table~\ref{tab:bw-sensitivity} varies $\BW$ while fixing $\Delta=0.04\,\mathrm{s}$. In both tables, the entries are median values of $\mathrm{Err}_{AB}$ and $\mathrm{Err}_{QR}$.

\begin{table}[h]
\centering
\small
\setlength{\tabcolsep}{4.5pt}
\caption{Sensitivity to the increment window \(\Delta\) under \(T=2\,\mathrm{s}\), \(\Delta t=0.01\,\mathrm{s}\), \(N=300\), \(\mathrm{SNR}=20\,\mathrm{dB}\), \(\BW=0.06\,\mathrm{s}\), and \(30\) Monte Carlo trials.}
\label{tab:delta-sensitivity}
\begin{tabular}{@{}c cc cc@{}}
\toprule
 & \multicolumn{2}{c}{\((n,m)=(4,2)\)} & \multicolumn{2}{c}{\((n,m)=(12,4)\)} \\
\cmidrule(lr){2-3}\cmidrule(lr){4-5}
\(\Delta\) (s) & \(\mathrm{Err}_{AB}\) & \(\mathrm{Err}_{QR}\) & \(\mathrm{Err}_{AB}\) & \(\mathrm{Err}_{QR}\) \\
\midrule
0.02 & \(4.77 \times 10^{-2}\) & \(5.31 \times 10^{-2}\) & \(5.53 \times 10^{-2}\) & \(9.28 \times 10^{-2}\) \\
0.04 & \(5.42 \times 10^{-2}\) & \(4.83 \times 10^{-2}\) & \(5.86 \times 10^{-2}\) & \(9.20 \times 10^{-2}\) \\
0.06 & \(6.10 \times 10^{-2}\) & \(5.62 \times 10^{-2}\) & \(5.78 \times 10^{-2}\) & \(8.99 \times 10^{-2}\) \\
0.08 & \(6.96 \times 10^{-2}\) & \(6.41 \times 10^{-2}\) & \(6.33 \times 10^{-2}\) & \(9.11 \times 10^{-2}\) \\
0.10 & \(6.95 \times 10^{-2}\) & \(7.20 \times 10^{-2}\) & \(6.60 \times 10^{-2}\) & \(1.029 \times 10^{-1}\) \\
0.12 & \(7.56 \times 10^{-2}\) & \(6.52 \times 10^{-2}\) & \(7.12 \times 10^{-2}\) & \(9.16 \times 10^{-2}\) \\
\bottomrule
\end{tabular}
\end{table}
Table~\ref{tab:delta-sensitivity} indicates that the reconstruction is only moderately sensitive to the increment window within the tested range. For the smaller system $(n,m)=(4,2)$, increasing $\Delta$ beyond the default value leads to a visible increase in both $\mathrm{Err}_{AB}$ and $\mathrm{Err}_{QR}$. For the larger system $(n,m)=(12,4)$, the dependence is milder: $\mathrm{Err}_{AB}$ increases gradually for larger windows, while $\mathrm{Err}_{QR}$ remains comparatively flat except for a mild degradation around the largest windows. Overall, these trends are compatible with the local $A_c(\cdot)$ reconstruction analysis in Section~\ref{sec:noise}, where increasing $\Delta$ reduces short-window noise amplification but increases discretization and sampling bias.

Table~\ref{tab:bw-sensitivity} shows an even milder dependence on the denoising bandwidth. Over the range $\BW\in[0.015,0.09]$, the changes in both $\mathrm{Err}_{AB}$ and $\mathrm{Err}_{QR}$ remain modest, and $\BW=0.06\,\mathrm{s}$ lies inside a stable plateau rather than at the boundary of the tested range. Very small bandwidths can slightly increase errors in some entries, whereas larger bandwidths do not provide a systematic improvement. Thus, the default bandwidth does not appear to rely on fine tuning.

\begin{table}[h]
\centering
\small
\setlength{\tabcolsep}{4.5pt}
\caption{Sensitivity to the denoising bandwidth \(\BW\) under \(T=2\,\mathrm{s}\), \(\Delta t=0.01\,\mathrm{s}\), \(N=300\), \(\mathrm{SNR}=20\,\mathrm{dB}\), \(\Delta=0.04\,\mathrm{s}\), and \(30\) Monte Carlo trials.}
\label{tab:bw-sensitivity}
\begin{tabular}{@{}c cc cc@{}}
\toprule
 & \multicolumn{2}{c}{\((n,m)=(4,2)\)} & \multicolumn{2}{c}{\((n,m)=(12,4)\)} \\
\cmidrule(lr){2-3}\cmidrule(lr){4-5}
\(\BW\) (s) & \(\mathrm{Err}_{AB}\) & \(\mathrm{Err}_{QR}\) & \(\mathrm{Err}_{AB}\) & \(\mathrm{Err}_{QR}\) \\
\midrule
0.015 & \(5.66 \times 10^{-2}\) & \(5.54 \times 10^{-2}\) & \(5.59 \times 10^{-2}\) & \(9.59 \times 10^{-2}\) \\
0.030 & \(4.84 \times 10^{-2}\) & \(5.97 \times 10^{-2}\) & \(5.80 \times 10^{-2}\) & \(9.68 \times 10^{-2}\) \\
0.045 & \(5.50 \times 10^{-2}\) & \(5.24 \times 10^{-2}\) & \(6.03 \times 10^{-2}\) & \(9.67 \times 10^{-2}\) \\
0.060 & \(5.49 \times 10^{-2}\) & \(5.25 \times 10^{-2}\) & \(5.67 \times 10^{-2}\) & \(9.22 \times 10^{-2}\) \\
0.090 & \(5.74 \times 10^{-2}\) & \(5.08 \times 10^{-2}\) & \(6.58 \times 10^{-2}\) & \(9.38 \times 10^{-2}\) \\
\bottomrule
\end{tabular}
\end{table}

\subsection{Comparison with a matched baseline under noisy observations}\label{sec:experiments-baseline}

We next fix the six-mass chain and compare CR-IOC with the matched baseline \emph{SysID+IOC} under noisy observations. The experiment uses $T=2\,\mathrm{s}$, $\Delta t=0.01\,\mathrm{s}$, $N=1200$ demonstrations, and $\mathrm{SNR}\in\{10,15,20,30,40\}\,\mathrm{dB}$, again with $30$ Monte Carlo trials per setting.

\begin{figure}[t]
\centering
\includegraphics[width=0.98\linewidth]{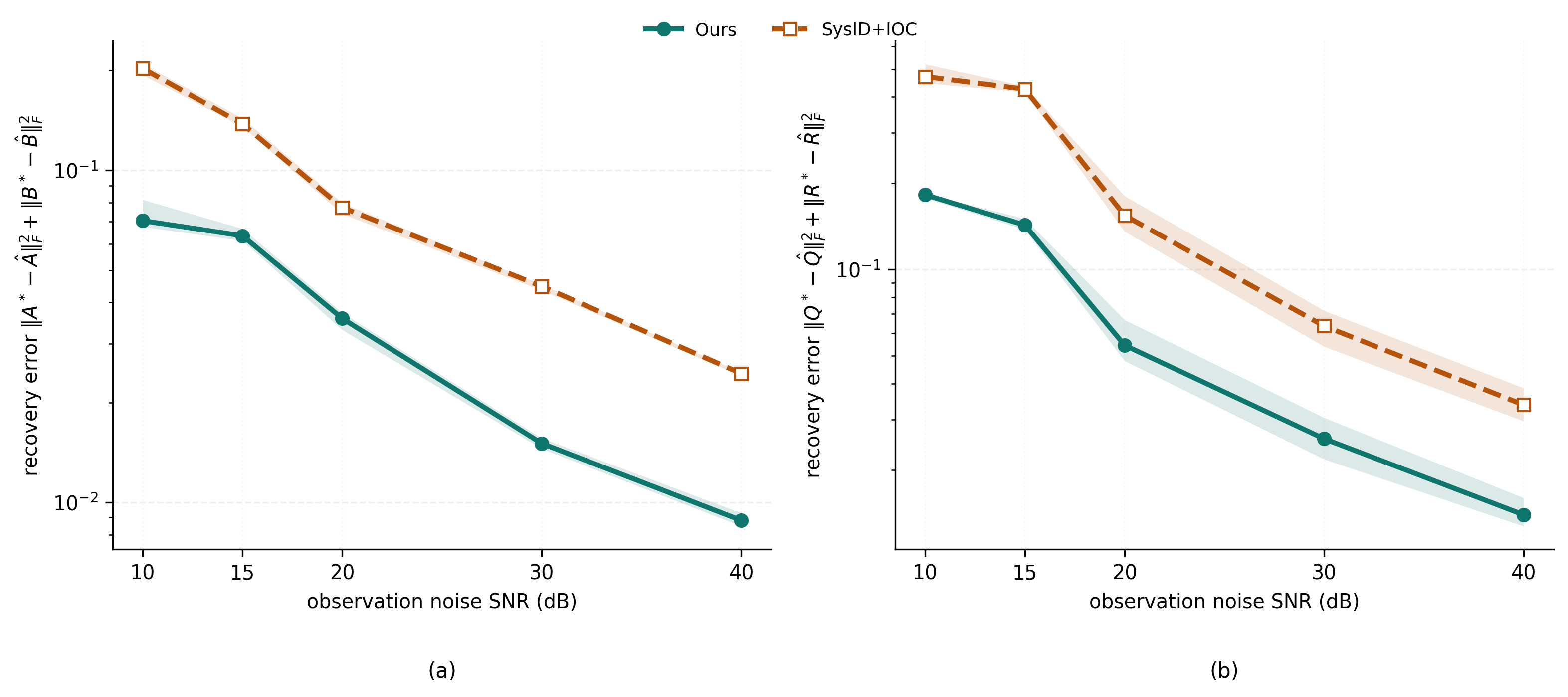}
\caption{Matched noisy comparison on the six-mass chain $(n=12,m=4)$. Both methods use the same denoising, gain reconstruction, and cost-recovery modules; they differ only in the dynamics stage. CR-IOC remains below \emph{SysID+IOC} across the full SNR range in both $\mathrm{Err}_{AB}$ and $\mathrm{Err}_{QR}$.}
\label{fig:baseline-comparison}
\end{figure}

CR-IOC improves both metrics at every tested SNR level. At $20$ dB, for example, the median dynamics error decreases from $7.72\times 10^{-2}$ to $3.58\times 10^{-2}$ and the median cost error decreases from $1.55\times 10^{-1}$ to $5.45\times 10^{-2}$, corresponding to reductions of about $54\%$ and $65\%$, respectively. The same advantage is visible at both ends of the noise range: at $10$ dB the reductions are about $65\%$ in $\mathrm{Err}_{AB}$ and $61\%$ in $\mathrm{Err}_{QR}$, while at $40$ dB they remain about $64\%$ and $59\%$. Across the tested range, the ranking never inverts as the observations become cleaner, and the gap remains visible even when the baseline receives relatively mild noise.

This pattern illustrates the main empirical advantage targeted by CR-IOC. The two methods use the same denoised trajectories and the same final SDP; the only algorithmic difference is whether the finite-horizon variation of $K(t)$ is used to separate $A$ and $B$. The persistence of the gap up to $40$ dB therefore suggests that the improvement is due in part to this structural information rather than solely to additional smoothing or regularization. This interpretation is consistent with Theorem~\ref{thm:AB-ident}: finite-horizon gain variation carries information that separates $A$ and $B$, whereas the open-loop least-squares baseline does not exploit that structure. More accurate recovery of $(A,B)$ then benefits the cost stage through the coupling captured by Theorem~\ref{thm:joint-ident} and quantified in Theorem~\ref{thm:QRH-perturb}. 

\subsection{Closed-loop reconstruction on a fixed dense $4\times 2$ benchmark}\label{sec:experiments-e2e}

To complement the structured mass-spring examples, we next consider a fixed dense $4\times 2$ benchmark. This example is used to assess closed-loop reconstruction in a setting that does not inherit the sparsity and symmetry of the mass-spring dynamics. The tuple $(A_1^{\star},B_1^{\star},Q_1^{\star},R_1^{\star},H_1^{\star})$ was generated once at random subject to Assumptions~\ref{ass:standing} and \ref{ass:terminal-prop}, and then held fixed throughout the Monte Carlo study. The reference system is
\[
A_1^{\star}=
\begin{bmatrix}
0.321246 & 0.710597 & -1.021317 & -0.055186\\
0.405488 & 0.440857 & 0.261515 & 0.598847\\
0.115983 & 0.220507 & -0.028505 & -0.429543\\
-0.338652 & 0.151834 & -0.232078 & 0.408621
\end{bmatrix},
\qquad
B_1^{\star}=
\begin{bmatrix}
-0.391870 & 0.049920\\
-0.165190 & -0.167969\\
0.062616 & -0.225711\\
0.296923 & -0.048893
\end{bmatrix},
\]
with diagonal weights
\[
Q_1^{\star}=\mathrm{diag}(0.632127,\,0.795285,\,1.404545,\,1.168044),
\qquad
R_1^{\star}=\mathrm{diag}(1.215864,\,0.784136),
\qquad
H_1^{\star}=Q_1^{\star}.
\]
The benchmark uses $T=2.0\,\mathrm{s}$, $\Delta t=0.01\,\mathrm{s}$, $N=16$, and $30$ dB observation noise, with $30$ Monte Carlo trials.

\begin{figure}[t]
\centering
\includegraphics[width=0.98\linewidth]{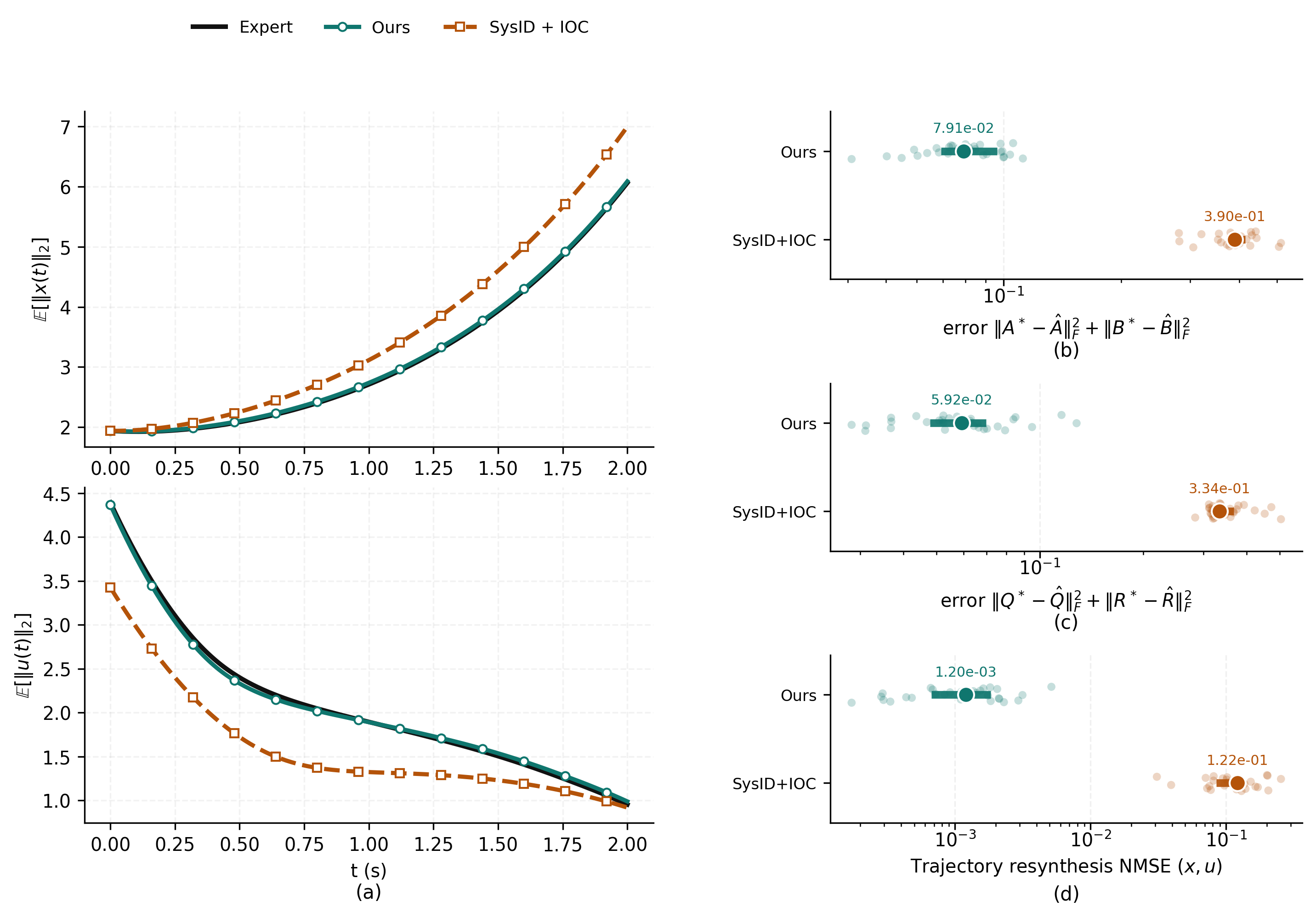}
\caption{Closed-loop reconstruction on the fixed dense reference system $(A_1^{\star},B_1^{\star})$. The left column reports aggregated state-norm and input-norm profiles for the expert and reconstructed closed loops, and the right column summarizes the Monte Carlo distributions of $\mathrm{Err}_{AB}$, $\mathrm{Err}_{QR}$, and $\mathrm{NMSE}_{x,u}$.}
\label{fig:resynthesis-benchmark}
\end{figure}

\begin{table}[t]
\centering
\small
\caption{Closed-loop reconstruction on the fixed dense $4\times 2$ benchmark (median over $30$ trials).}
\label{tab:resynthesis-benchmark}
\begin{tabular}{@{}lcccc@{}}
\toprule
Method & $\mathrm{Err}_{AB}$ & $\mathrm{Err}_{QR}$ & $\mathrm{NMSE}_{x,u}$& runtime (s)\\
\midrule
CR-IOC & $7.91\times 10^{-2}$ & $5.92\times 10^{-2}$ & $1.20\times 10^{-3}$ & 0.38\\
\emph{SysID+IOC} & $3.90\times 10^{-1}$ & $3.34\times 10^{-1}$ & $1.22\times 10^{-1}$ & 0.27\\
\bottomrule
\end{tabular}
\end{table}

Table~\ref{tab:resynthesis-benchmark} shows that the CR-IOC algorithm achieves a more pronounced performance improvement in closed-loop trajectory recovery than in the recovery of dynamics parameters and cost function parameters.
 Relative to \emph{SysID+IOC}, the median rollout mismatch $\mathrm{NMSE}_{x,u}$ decreases from $1.22\times 10^{-1}$ to $1.20\times 10^{-3}$, whereas $\mathrm{Err}_{AB}$ and $\mathrm{Err}_{QR}$ are reduced by factors of approximately $4.9$ and $5.6$, respectively. This is also reflected in Fig.~\ref{fig:resynthesis-benchmark}: the trajectories reconstructed from CR-IOC track the aggregate state- and input-norm profiles of the expert much more closely over the full horizon, whereas \emph{SysID+IOC} shows clear transient mismatches in both quantities. Hence, the advantage of improved joint recovery is not merely parametric; it yields a substantially smaller closed-loop mismatch of the finite-horizon closed-loop behavior induced by the recovered system and cost.

This behavior is qualitatively consistent with the error-propagation picture in Theorem~\ref{thm:e2e}. The theorem controls the staged reconstruction errors in $K(\cdot)$, $A_c(\cdot)$, $(A,B)$, and the normalized cost parameters under the admissible perturbation conditions. Although it does not directly bound the rollout metric $\mathrm{NMSE}_{x,u}$, the finite-horizon closed-loop trajectories depend continuously on these recovered quantities over a bounded time interval. Table~\ref{tab:resynthesis-benchmark} also shows that, on this small benchmark, the two methods have runtimes of the same order in the present implementation; thus, the behavior-level improvement is not tied to a large runtime gap in this setting.

\subsection{Conditioning diagnosis}\label{sec:experiments-conditioning}

Finally, we probe the theory by constructing three diagnostic settings on the fixed dense $4\times2$ benchmark introduced in Section~\ref{sec:experiments-e2e}. This is a targeted conditioning study rather than a broad Monte Carlo sweep. Thus, we report means over $30$ Monte Carlo trials for each of the three settings. Setting R1 is the informative reference case with $T_{\mathrm{obs}}=T_{\mathrm{expert}}=2.0\,\mathrm{s}$ and $N=16$. Setting R2 shortens the observation window to $T_{\mathrm{obs}}=0.2\,\mathrm{s}$ while keeping $T_{\mathrm{expert}}=4.0\,\mathrm{s}$, uses only $N=8$ demonstrations, and reduces the initial-state scale to $0.2$, thereby degrading both excitation and gain variation. Setting R3 keeps $T_{\mathrm{obs}}=T_{\mathrm{expert}}=2.0\,\mathrm{s}$ and uses $N=12$, but trims the tail of the horizon in the cost stage so that only the first $3\%$ of the interval is used to assemble the cost operator. All three settings use $\Delta t=0.01\,\mathrm{s}$ and $30$ dB observation noise.

\begin{figure}[t]
\centering
\includegraphics[width=0.98\linewidth]{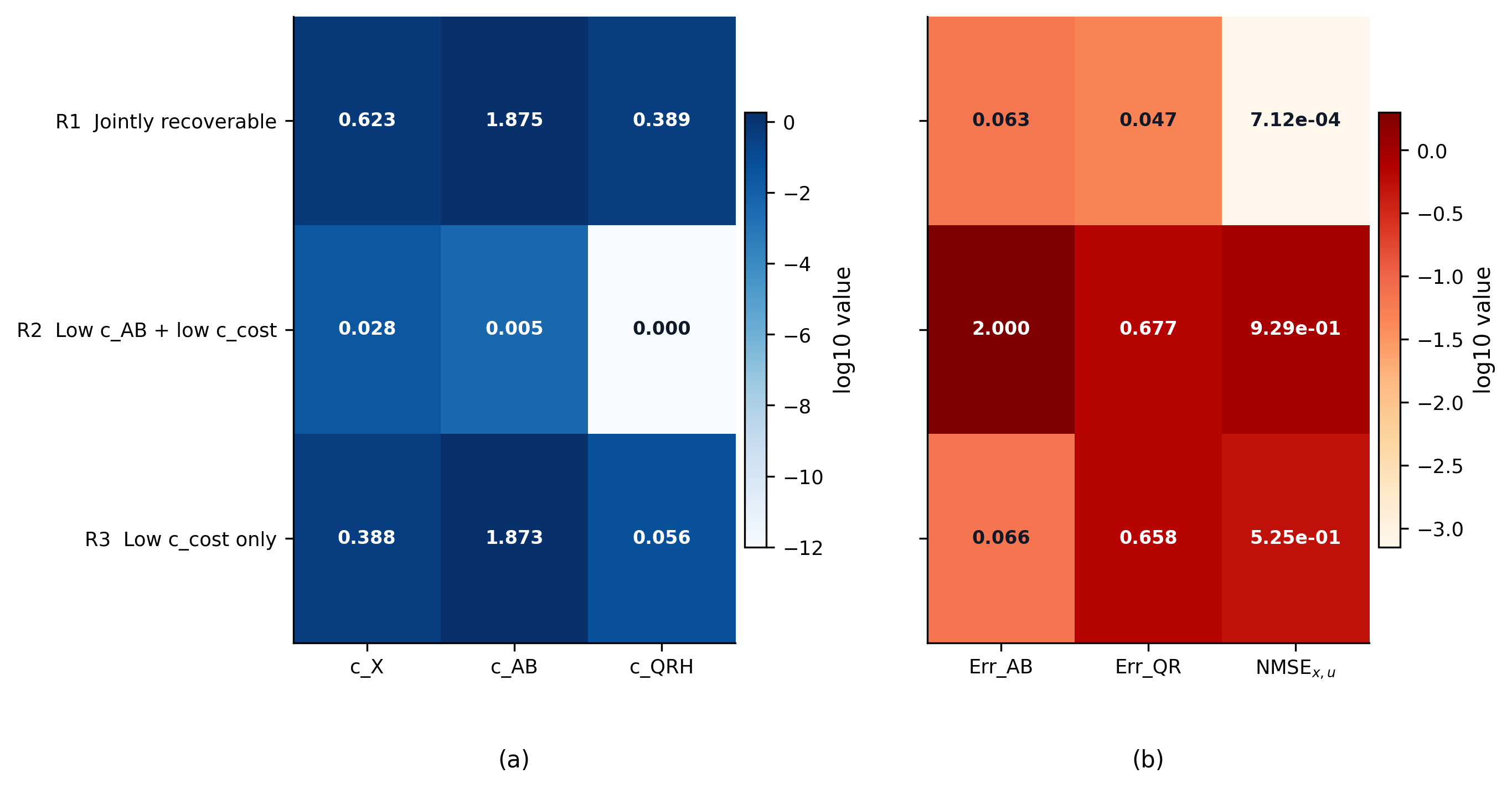}
\caption{Diagnostic conditioning study on the fixed $4\times 2$ reference system. Fig.~\ref{fig:conditioning-regimes}(a) reports the means of the empirical conditioning indices for each setting, and Fig.~\ref{fig:conditioning-regimes}(b) reports the corresponding mean recovery errors and rollout mismatch. R1 is a recoverable case, R2 is jointly ill-conditioned, and R3 primarily degrades the cost stage.}
\label{fig:conditioning-regimes}
\end{figure}

Figure~\ref{fig:conditioning-regimes} shows the stage-wise pattern anticipated by the theory. In R1, all three empirical indices remain clearly positive, and the mean errors are small. In R2, the full pipeline becomes poorly conditioned: $\hat c_X= 0.028$, $\hat c_{AB}= 5.0\times 10^{-3}$, and $\hat c_{QRH}$ is numerically zero at the displayed precision, while the corresponding mean errors increase to $\mathrm{Err}_{AB}=2.00$, $\mathrm{Err}_{QR}=0.677$, and $\mathrm{NMSE}_{x,u}=9.29\times 10^{-1}$. In R3, by contrast, $\hat c_X= 0.388$ and $\hat c_{AB}= 1.873$ remain comparable to the recoverable setting, and the dynamics error stays at $0.066$, but the reduction in $\hat c_{QRH}$ to $0.056$ is accompanied by strong degradation in both $\mathrm{Err}_{QR}$ and $\mathrm{NMSE}_{x,u}$.

This stage-selective degradation is the practical value of the conditioning indices. Theorem~\ref{thm:joint-ident} shows that positivity of both the
dynamics and cost indices guarantees joint identifiability; Theorem~\ref{thm:AB-perturb} explains why the dynamics stage remains stable when $\hat c_{AB}$ stays away from zero; and Theorem~\ref{thm:QRH-perturb} shows why the cost stage can still fail when only $\hat c_{QRH}$ deteriorates. For experimental design, the diagnostics are informative rather than merely retrospective. A setting like R2 is unlikely to be repaired by solver tuning alone and instead points to insufficient excitation, insufficient observation length, or both. A setting like R3 points more specifically to a cost-stage bottleneck, for which retaining more late-horizon information or modifying the cost parametrization are natural remedies. Thus, the empirical indices help indicate which aspect of the data-collection design should be changed.

Taken together, the experiments support the main qualitative claims of the paper: finite-horizon gain variation can make joint recovery possible, the empirical conditioning indices help explain when that recovery is numerically stable, and the resulting parameter improvements can be meaningful at the behavioral level. The experiments remain limited to full state–input observations and deterministic dynamics without process noise. Therefore, extending the same diagnostic framework to partial observations and stochastic settings remains an important direction for future work.

\section{Conclusion}\label{sec:conclusion}
This paper studied finite-horizon continuous-time inverse LQR from state-input closed-loop trajectories when both the system dynamics and the quadratic cost are unknown. The central message is that finite-horizon gain variation is not merely a byproduct of the Riccati terminal condition; it is the information source that allows the constant open-loop pair to be separated from the time-varying closed-loop dynamics matrix. Within the admissible class analyzed in the paper, this leads to joint identifiability conditions for the unknown dynamics and cost.

We quantified this structure through three conditioning indices. The index $c_X$ measures state richness and governs recovery of $K(\cdot)$ and $A_c(\cdot)$ from data. The index $c_{AB}$ measures the richness of gain variation needed to separate $(A,B)$. The index $c_{QRH}$ measures injectivity of the structured cost operator after scale removal. These quantities unify the paper: they appear in the identifiability theory, in the sampled-data reconstruction method, and in the perturbation bounds.

On this basis, We proposed CR-IOC, a conditioning-aware sampled-data reconstruction method built on temporal denoising and Richardson bias-cancellation. The method performs local closed-loop reconstruction of the dynamics, followed by closed-form recovery of $(A,B)$ and convex recovery of the quadratic weights. We established non-asymptotic perturbation bounds and consistency under sampling and sub-Gaussian observation noise, showing how the same conditioning indices govern sensitivity throughout the reconstruction pipeline.

Because the indices are computable from data through their empirical counterparts, the theory also yields practical diagnostics for data screening, horizon selection, and numerical regularization. The present analysis still assumes full state-input trajectories and focuses on deterministic LTI systems with quadratic costs within the structured family considered in the paper. Extending the operator framework to partial observations, process noise, output-feedback settings, and broader classes of optimal control objectives remains an important direction for future work.

\bibliographystyle{IEEEtran}
\bibliography{reference}

@book{lewis2012optimal,
  title={Optimal control},
  author={Lewis, Frank L and Vrabie, Draguna and Syrmos, Vassilis L},
  year={2012},
  publisher={John Wiley \& Sons}
}

@article{ab2020inverse,
  title={From inverse optimal control to inverse reinforcement learning: A historical review},
  author={Ab Azar, Nematollah and Shahmansoorian, Aref and Davoudi, Mohsen},
  journal={Annual Reviews in Control},
  volume={50},
  pages={119--138},
  year={2020},
  publisher={Elsevier}
}

@article{adams2022survey,
  title={A survey of inverse reinforcement learning},
  author={Adams, Stephen and Cody, Tyler and Beling, Peter A},
  journal={Artificial Intelligence Review},
  volume={55},
  number={6},
  pages={4307--4346},
  year={2022},
  publisher={Springer}
}

@article{chan2025inverse,
  title={Inverse optimization: Theory and applications},
  author={Chan, Timothy CY and Mahmood, Rafid and Zhu, Ian Yihang},
  journal={Operations Research},
  volume={73},
  number={2},
  pages={1046--1074},
  year={2025},
  publisher={INFORMS}
}

@article{li2020ctiqoc,
  title        = {Continuous-time inverse quadratic optimal control problem},
  author       = {Li, Yibei and Yao, Yu and Hu, Xiaoming},
  journal      = {Automatica},
  volume       = {117},
  pages        = {108977},
  year         = {2020},
  doi          = {10.1016/j.automatica.2020.108977}
}

@article{cheng2026ddioc,
  title        = {Data-driven inverse optimal control for linear quadratic tracking with unknown target states},
  author       = {Cheng, Renshuo and Yu, Chengpu and Li, Yao},
  journal      = {Automatica},
  volume       = {185},
  pages        = {112822},
  year         = {2026},
  doi          = {10.1016/j.automatica.2026.112822}
}

@article{cao2025inverse,
  title={Inverse Continuous-Time Linear Quadratic Regulator: From Control Cost Matrix to Entire Cost Reconstruction},
  author={Cao, Yuexin and Li, Yibei and Zou, Zhuo and Hu, Xiaoming},
  journal={arXiv:2510.04083},
  year={2025}
}

@book{anderson2007optimal,
  title        = {Optimal Control: Linear Quadratic Methods},
  author       = {Anderson, Brian D. O. and Moore, John B.},
  publisher    = {Dover Publications},
  year         = {2007},
  isbn         = {9780486457666}
}

@article{cao2025differential,
  title={A differential dynamic programming framework for inverse reinforcement learning},
  author={Cao, Kun and Xu, Xinhang and Jin, Wanxin and Johansson, Karl H and Xie, Lihua},
  journal={IEEE Transactions on Robotics},
  year={2025},
  publisher={IEEE}
}

@inproceedings{geadah2024inferring,
  title={Inferring system and optimal control parameters of closed-loop systems from partial observations},
  author={Geadah, Victor and Arbelaiz, Juncal and Ritz, Harrison and Daw, Nathaniel D and Cohen, Jonathan D and Pillow, Jonathan W},
  booktitle={2024 IEEE 63rd Conference on Decision and Control (CDC)},
  pages={8006--8013},
  year={2024},
  organization={IEEE}
}

@article{zhang2019lqr,
  title   = {Inverse optimal control for discrete-time finite-horizon linear quadratic regulators},
  author  = {Zhang, Han and Umenberger, Jack and Hu, Xiaoming},
  journal = {Automatica},
  volume  = {110},
  pages   = {108593},
  year    = {2019},
  doi     = {10.1016/j.automatica.2019.108593}
}

@article{yu2021sysid,
  title={System identification approach for inverse optimal control of finite-horizon discrete-time {LQR}},
  author={Yu, Chengpu and Gao, Zhe and Li, Yao},
  journal={Automatica},
  volume={129},
  pages={109636},
  year={2021},
  month={July},
  doi={10.1016/j.automatica.2021.109636}
}

@article{garrabe2025convexddioc,
  title   = {On convex data-driven inverse optimal control for nonlinear, non-stationary and stochastic systems},
  author  = {Garrabe, Emiland and Jesawada, Hozefa and Del Vecchio, Carmen and Russo, Giovanni},
  journal = {Automatica},
  volume  = {173},
  pages   = {112015},
  year    = {2025},
  month   = {March},
  doi     = {10.1016/j.automatica.2024.112015}
}

@article{mayne2000mpc,
  title   = {Constrained model predictive control: Stability and optimality},
  author  = {Mayne, David Q. and Rawlings, James B. and Rao, Christopher V. and Scokaert, Pierre O. M.},
  journal = {Automatica},
  volume  = {36},
  number  = {6},
  pages   = {789--814},
  year    = {2000},
  doi     = {10.1016/S0005-1098(99)00214-9}
}

@book{rawlings2009mpc,
  title     = {Model Predictive Control: Theory and Design},
  author    = {Rawlings, James B. and Mayne, David Q.},
  publisher = {Nob Hill Publishing},
  year      = {2009},
  isbn      = {9780975937709}
}

@article{kalman1964linear,
  title={When is a linear control system optimal?},
  author={R.E.Kalman},
  journal={Journal of Basic Engineering},
  year={1964},
  volume={86},
  number={1},
  pages={51--60},
}

@book{boyd1994linear,
  title={Linear matrix inequalities in system and control theory},
  author={Boyd, Stephen and El Ghaoui, Laurent and Feron, Eric and Balakrishnan, Venkataramanan},
  year={1994},
  publisher={Society for Industrial and Applied Mathematics}
}

@inproceedings{molloy2016discrete,
  title={Discrete-time inverse optimal control with partial-state information: A soft-optimality approach with constrained state estimation},
  author={Molloy, Timothy L and Tsai, Dorian and Ford, Jason J and Perez, Tristan},
  booktitle={2016 IEEE 55th Conference on Decision and Control (CDC)},
  pages={1926--1932},
  year={2016},
}

@article{zhang2019inverse,
  title={Inverse optimal control for discrete-time finite-horizon linear quadratic regulators},
  author={Zhang, Han and Umenberger, Jack and Hu, Xiaoming},
  journal={Automatica},
  volume={110},
  pages={108593},
  year={2019},
  publisher={Elsevier}
}

@inproceedings{zhang2022statistically,
  title={Statistically consistent inverse optimal control for linear-quadratic tracking with random time horizon},
  author={Zhang, Han and Ringh, Axel and Jiang, Weihan and Li, Shaoyuan and Hu, Xiaoming},
  booktitle={2022 41st Chinese Control Conference (CCC)},
  pages={1515--1522},
  year={2022},
}

@article{qu2024control,
  author={Qu, Chendi and He, Jianping and Duan, Xiaoming},
  journal={IEEE Transactions on Automatic Control}, 
  title={Control Law Learning Based on {LQR} Reconstruction With Inverse Optimal Control}, 
  year={2025},
  volume={70},
  number={2},
  pages={1350-1357}
}

@article{qu20243dioc,
  title={{3DIOC}: Direct Data-Driven Inverse Optimal Control for {LTI} Systems},
  author={Qu, Chendi and He, Jianping and Duan, Xiaoming},
  journal={arXiv:2409.10884},
  year={2024}
}

@article{xue2021inverse,
  title={Inverse reinforcement {Q}-learning through expert imitation for discrete-time systems},
  author={Xue, Wenqian and Lian, Bosen and Fan, Jialu and Kolaric, Patrik and Chai, Tianyou and Lewis, Frank L},
  journal={IEEE Transactions on Neural Networks and Learning Systems},
  volume={34},
  number={5},
  pages={2386--2399},
  year={2021},
  publisher={IEEE}
}

@article{jin2020pontryagin,
  title={Pontryagin differentiable programming: An end-to-end learning and control framework},
  author={Jin, Wanxin and Wang, Zhaoran and Yang, Zhuoran and Mou, Shaoshuai},
  journal={Advances in Neural Information Processing Systems},
  volume={33},
  pages={7979--7992},
  year={2020}
}

@article{zhang2023inverse,
  title={Inverse linear-quadratic discrete-time finite-horizon optimal control for indistinguishable homogeneous agents: A convex optimization approach},
  author={Zhang, Han and Ringh, Axel},
  journal={Automatica},
  volume={148},
  pages={110758},
  year={2023},
  publisher={Elsevier}
}

@article{li2024inverse,
  title={Inverse Kalman filtering problems for discrete-time systems},
  author={Li, Yibei and Wahlberg, Bo and Hu, Xiaoming and Xie, Lihua},
  journal={Automatica},
  volume={163},
  pages={111560},
  year={2024},
  publisher={Elsevier}
}

@inproceedings{karg2024bi,
  title={Bi-level-based inverse stochastic optimal control},
  author={Karg, Philipp and Hess, Manuel and Varga, Balint and Hohmann, S{\"o}ren},
  booktitle={2024 European Control Conference (ECC)},
  pages={537--544},
  year={2024},
}

@article{jean2024inverse,
  title={Inverse optimal control problem in the non autonomous linear-quadratic case},
  author={Jean, Fr{\'e}d{\'e}ric and Maslovskaya, Sofya},
  journal={arXiv:2406.14270},
  year={2024}
}

@article{hallinan2025inverse,
  title={Inverse Optimal Control for Passive Network Systems},
  author={Hallinan, Liam and Watson, Jeremy D and Lestas, Ioannis},
  journal={IEEE Transactions on Automatic Control},
  year={2025},
  publisher={IEEE}
}

@book{vershynin2018hdp,
  author    = {Vershynin, Roman},
  title     = {High-Dimensional Probability: An Introduction with Applications in Data Science},
  publisher = {Cambridge University Press},
  address   = {Cambridge},
  year      = {2018}
}

@article{richardson1911approx,
  author  = {Richardson, Lewis Fry},
  title   = {The Approximate Arithmetical Solution by Finite Differences of Physical Problems Involving Differential Equations, with an Application to the Stresses in a Masonry Dam},
  journal = {Philosophical Transactions of the Royal Society of London. Series A},
  volume  = {210},
  number  = {459--470},
  pages   = {307--357},
  year    = {1911},
  doi     = {10.1098/rsta.1911.0009}
}

@article{joyce1971survey,
  author  = {Joyce, D. C.},
  title   = {Survey of Extrapolation Processes in Numerical Analysis},
  journal = {SIAM Review},
  volume  = {13},
  number  = {4},
  pages   = {435--490},
  year    = {1971},
  doi     = {10.1137/1013092}
}

@article{dean2020sample,
  title={On the sample complexity of the linear quadratic regulator},
  author={Dean, Sarah and Mania, Horia and Matni, Nikolai and Recht, Benjamin and Tu, Stephen},
  journal={Foundations of Computational Mathematics},
  volume={20},
  number={4},
  pages={633--679},
  year={2020},
  doi={10.1007/s10208-019-09426-y}
}

@inproceedings{mania2019certainty,
  title={Certainty equivalence is efficient for linear quadratic control},
  author={Mania, Horia and Tu, Stephen and Recht, Benjamin},
  booktitle={Advances in Neural Information Processing Systems},
  volume={32},
  year={2019}
}

@article{diamond2016cvxpy,
  author  = {Diamond, Steven and Boyd, Stephen},
  title   = {{CVXPY}: A Python-Embedded Modeling Language for Convex Optimization},
  journal = {Journal of Machine Learning Research},
  year    = {2016},
  volume  = {17},
  number  = {83},
  pages   = {1--5},
}

@misc{goulart2024clarabel,
  author       = {Goulart, Paul J. and Chen, Yuwen},
  title        = {Clarabel: An Interior-Point Solver for Conic Programs with Quadratic Objectives},
  year         = {2024},
  eprint       = {2405.12762},
  archivePrefix= {arXiv},
  primaryClass = {math.OC},
}

\end{document}